\theoremstyle{plain}
\newtheorem{theorem}{Theorem}
\newtheorem{lemma}[theorem]{Lemma}
\newtheorem{corollary}[theorem]{Corollary}
\newtheorem{proposition}[theorem]{Proposition}
\newtheorem{observation}[theorem]{Observation}
\theoremstyle{definition}
\newtheorem{definition}{Definition}
\newtheorem{question}{Question}
\def\x{\mathbf{x}}
\def\y{\mathbf{y}}
\def\z{\mathbf{z}}
\def\0{\mathbf{0}}
\DeclareMathOperator{\supp}{supp}
\DeclareMathOperator{\diag}{diag}
\DeclareMathOperator{\Aut}{Aut}
\DeclareMathOperator{\symdif}{\nabla}
\DeclareMathOperator{\CP}{CP}
\title{On Singular Signed Graphs with Nullspace Spanned by a Full Vector: Signed Nut Graphs}
\author[1,2,3]{Nino Ba{\v s}i{\'c}}
\author[4]{Patrick~W.~Fowler}
\author[1,2,3,5]{Toma{\v z} Pisanski}
\author[6]{Irene Sciriha}
\affil[1]{FAMNIT, University of Primorska, Koper, Slovenia}
\affil[2]{IAM, University of Primorska, Koper, Slovenia}
\affil[3]{Institute of Mathematics, Physics and Mechanics, Ljubljana, Slovenia}
\affil[4]{Department of Chemistry, University of Sheffield, Sheffield S3 7HF, UK}
\affil[5]{Faculty of Mathematics and Physics, University of Ljubljana, Ljubljana, Slovenia}
\affil[6]{Department of Mathematics, Faculty of Science, University of Malta, Msida, Malta}
\date{\today}
\newcommand{\maltcross}{\scalerel*{%
    \tikz\fill
    (0.02,0.02)    -- (0.2,0.5)   -- (0,0.4)  -- (-0.2,0.5)  --
    (-0.02,0.02)   -- (-0.5,0.2)  -- (-0.4,0) -- (-0.5,-0.2) --
    (-0.02,-0.02)  -- (-0.2,-0.5) -- (0,-0.4) -- (0.2,-0.5)  --
    (0.02,-0.02)   -- (0.5,-0.2)  -- (0.4,0)  -- (0.5,0.2)   --
    cycle;}%
    {0}%
}
\begin{document}

\maketitle

\begin{abstract}
A signed graph has edge weights drawn from the set 
$\{+1,-1\}$, and is termed \emph{sign-balanced} if it is equivalent to an unsigned graph
under the operation of sign switching; otherwise it is called \emph{sign-unbalanced}.
A nut graph has a one dimensional kernel with a corresponding eigenvector that is full. 
In this paper we generalise the notion of nut graphs to signed graphs.
Orders for which unsigned regular nut graphs exist were determined recently for the degrees up to $11$.
By extending the definition to signed nut graphs,
we find all pairs $(\rho, n)$ for which a $\rho$-regular nut graph (sign-balanced or sign-unbalanced) of order $n$ exists with $\rho \le 11$.
We  devise a construction for signed nut graphs based on a smaller `seed' graph, giving infinite series of both sign-balanced and sign-unbalanced
$\rho$-regular nut graphs. 
All orders for which a \emph{complete sign-unbalanced} nut graph exists are characterised; they have
underlying graph $K_n$ with $n \equiv 1 \pmod 4$.
All orders for which a regular \emph{sign-unbalanced} nut graph with $\rho = n - 2$ exists are also characterised; they have 
an underlying cocktail-party graph $\CP(n)$ with even $n \geq 8$.

\vspace{\baselineskip}
\noindent
\textbf{Keywords:} Signed graph, nut graph, singular graph, graph spectrum, Fowler construction, sign-balanced graph, sign-unbalanced graph, cocktail-party graph.

\vspace{\baselineskip}
\noindent
\textbf{Math.\ Subj.\ Class.\ (2020):} 
05C50, 
15A18, 
05C22, 
05C92 
\end{abstract}

\section{Introduction and motivation}

Spectral graph theory is an important branch of discrete mathematics that links graphs to linear algebra. Its applications are numerous. 
For instance, in Chemistry 
the H\"{u}ckel molecular orbital theory of conjugated $\pi$ systems \cite{streitwieser1961} is essentially an exercise in applied spectral graph theory \cite{trinajstic1992}.
Unlike the adjacency matrix
itself, the spectrum of the adjacency 
matrix is an invariant.  Singular graphs,
i.e.\ graphs that have
a zero eigenvalue
of the $0$-$1$ adjacency matrix, have been studied extensively.
A special class of singular graphs consists of  the core graphs, graphs of which
the null spaces of the adjacency matrix contains a full vector (i.e.\ a vector that has no zero entry).
One subclass of core graphs, known as nut graphs, is of particular interest.
A \emph{nut graph} is a singular graph whose 
$0$-$1$ adjacency matrix has a one-dimensional kernel
(nullity $\eta = 1$) and  a full corresponding eigenvector.  
In this sense, a nut graph is a maximally-extended core graph of nullity $1$ \cite{ScirihaGutman-NutExt}.
Some properties of nut graphs
are easily proved. For instance, nut graphs are connected, non-bipartite, and have no vertices of degree one  \cite{ScirihaGutman-NutExt}. 
It is conventional to require that a nut graph has $n \geq 2$ vertices, although some authors consider $K_1$ as the trivial nut.
There exist numerous construction rules for making larger nut graphs from smaller \cite{ScirihaGutman-NutExt}.

In this contribution we will consider signed graphs, i.e.\ graphs with edge weights drawn from $\{-1, 1\}$ and ask whether they can also be nut graphs. Although the 
problem may appear to be purely mathematical, 
we were driven to study it by the chemical interest arising from the specific properties expected of conjugated $\pi$ systems whose molecular graphs are nut graphs. 
For example, in electronic structure theory, systems
based on nut graphs have distributed radical reactivity, since occupation 
by a single electron
of the molecular orbital corresponding to the kernel eigenvector
leads to spin density on all carbon centres \cite{feuerpaper}. 
In the context of theories of molecular conduction, nut graphs have a unique status as the strong omni-conductors of nullity $1$ \cite{PWF_JCP_2014}.
Extension of our considerations to signed nut graphs allows
the study of twisted carbon networks. 
M\"{o}bius carbon networks, where the carbon framework is 
embedded on a surface with a half twist are representable by signed graphs. Such molecules have been synthesised \cite{Herges2006,Rzepa2005,Yoneda2014,Kim2009}, and they obey different electron counting rules from those of the standard unweighted  H\"{u}ckel networks \cite{Heilbronner1964,Fowler02}.
The distinct electronic structure and unusual steric interactions of M\"{o}bius systems suggest potential for applications in functional materials of various types \cite{Fumitoshi2018}.

The notion of signed graphs goes back to at least the 1950s and is documented in two dynamic-survey papers \cite{ZaslavDS9,ZaslavDS8}.
The fundamentals of this theory and its standard notation was developed by Thomas Zaslavsky in 1982 in \cite{Zaslav} and in a series of other papers.
For  general discussion of
signed graphs the reader is referred to recent papers \cite{BeCiKoWa2019,GhHaMaMa2020} which set out 
notation and basic properties. 
For nut graphs, several useful
papers are available, for instance \cite{Basic2020,Sc1998,Sc2007,Sc2008}. Recently the problem of existence of regular nut graphs was posed, and solved for cubic and quartic graphs \cite{GPS}. Later, it was solved for degrees $\rho$, $\rho \leq 11$ \cite{Jan}.  In this paper we carry the problem of 
the existence of regular nut graphs over from ordinary graphs to signed graphs. 
We characterise all cases ($\rho \le 11$)
for which a $\rho$-regular nut graph of order $n$ (either signed or unsigned) exists. 
This depends on a construction,
based on work on unsigned nut graphs \cite{Jan,GPS}, which
produces larger signed nut graphs from 
smaller, whether regular or not (though it is used here for regular graphs).
Cases where a sign-unbalanced nut graph exists with $\rho = n - 1$ or $\rho = n - 2$ are also fully characterised.
We finish the paper with two questions for the open cases (i.e.\ $\rho \geq 12$).

We note that our investigation is a special case of the spectral theory of finite-dimensional symmetric linear operators with nullspace spanned
by a full vector. The case where a fixed matrix $A$ with eigenvalue $\lambda$ has a part-full eigenvector has also attracted attention \cite{Maybee1990, McDonald1993}. 

\section{Signed graphs, nut graphs and signed nut graphs}

\subsection{Signed graphs}

In this section we start with formal definitions.
A \emph{signed graph} $\Gamma = (G, \Sigma)$ is a graph $G=(V, E)$ with a distinguished subset of edges $\Sigma \subseteq E$ that we shall 
call \emph{negative edges}. Equivalently, we may consider the signed graph $(G, \sigma)$ to be a graph endowed with a mapping 
$\sigma\colon E \rightarrow \{-1,+1\}$ where $\Sigma = \{e \in E \mid \sigma(e) = -1\}$. The adjacency matrix $A(\Gamma)$ of a signed graph is a symmetric 
matrix obtained from the adjacency matrix $A(G)$ of the \emph{underlying graph} $G$ by replacing $1$ by $-1$ for entries where 
the vertices are connected by a negative edge. 

Symmetries (automorphisms)  of a signed graph $\Gamma = (G,\Sigma)$ are also symmetries of the underlying graph $G$. They 
preserve edge weights:
\begin{equation}
\Aut \Gamma = \{\alpha \in \Aut G \mid \forall e = uv \in E(G): e \in \Sigma \iff \alpha(e)= \alpha(u)\alpha(v) \in \Sigma \}.
\end{equation}
Hence, the automorphism group $\Aut \Gamma$ of a signed graph $\Gamma$ is a subgroup of the automorphism group $\Aut G$ of the underlying graph $G$.

\subsection{Singular graphs, core graphs and nut graphs}

A graph that has zero as an eigenvalue is called a \emph{singular graph}, i.e.\ a graph is singular if and only if its adjacency matrix
has a non-trivial kernel. The dimension of the kernel is 
the \emph{nullity}. An eigenvector $\x$ can be viewed as a weighting of vertices, i.e.\ a mapping $\x\colon V \to \mathbb{R}$. A vector $\x$ belongs to the
kernel, $\ker A$, i.e. $\x \in \ker A$, if and only if for each vertex $v$ the sum of entries on the
neighbours $N_G(v)$ equals~$0$:
\begin{equation}
\sum_{u \in N_G(v)} \x(u) = 0.
\label{eq:locCon}
\end{equation}
The equation \eqref{eq:locCon} is called the \emph{local condition}.
The support, $\supp \x$, of a kernel eigenvector $\x \in \ker A$ is the subset of $V$ at which $\x$ attains non-zero values:
\begin{equation}
\supp \x = \{v \in V \mid \x(v) \neq 0\}.
\label{eq:suppdef}
\end{equation}
If $\supp \x = V$, then the vector $\x$ is \emph{full}.   Define $\supp \ker A$ as follows:
\begin{equation}
\supp \ker A = \bigcup_{\x \in \ker A} \supp \x.
\label{eq:suppkerdef}
\end{equation}
A singular graph $G$ is a \emph{core graph} if $\supp \ker A = V$. A core graph of nullity $1$ is called a \emph{nut graph}.
Although the kernel of a core graph may have a basis that has no full vectors, there exists a basis with all vectors being full.

\begin{proposition}
\label{prop:1}
Each core graph admits a kernel basis that contains only full vectors.
\end{proposition}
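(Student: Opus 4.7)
The plan is to start with an arbitrary basis $\{\x_1, \ldots, \x_k\}$ of $\ker A$ (where $k$ is the nullity of $G$) and then produce a new basis of full vectors via a genericity argument on the coefficients of linear combinations. The first step will be to show that, for every vertex $v \in V$, the linear functional $\phi_v \colon \RR^k \to \RR$ defined by $\phi_v(c_1, \ldots, c_k) = \sum_{i=1}^k c_i \x_i(v)$ is nonzero. Indeed, if $\phi_v$ vanished identically, then every kernel vector $\x = \sum_i c_i \x_i$ would satisfy $\x(v) = 0$, contradicting the core-graph hypothesis $\supp \ker A = V$. Consequently each $H_v := \ker \phi_v$ is a proper hyperplane in $\RR^k$.

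Next, I would observe that a candidate $\y = \sum_i c_i \x_i$ is full if and only if $\phi_v(c) \neq 0$ for every vertex $v$, i.e.\ $c \in U := \RR^k \setminus \bigcup_{v \in V} H_v$. Because $V$ is finite and $\RR$ is infinite, $U$ is non-empty — no finite union of proper subspaces of a vector space over an infinite field can cover the whole space. So any $c \in U$ already furnishes a single full kernel vector.

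To upgrade this to a \emph{basis} of full vectors, I would proceed inductively: given linearly independent $c^{(1)}, \ldots, c^{(j)} \in U$ with $j<k$, the set $U \setminus \operatorname{span}(c^{(1)}, \ldots, c^{(j)})$ is the complement in $\RR^k$ of the finite union of the $H_v$ together with one extra proper subspace, hence still non-empty by the same lemma. Selecting $c^{(j+1)}$ from it extends the independent family; after $k$ iterations we obtain a basis of $\RR^k$, and the associated $\y_j = \sum_i c^{(j)}_i \x_i$ form a basis of $\ker A$ consisting entirely of full vectors. The only real obstacle is the conceptual translation of the hypothesis $\supp \ker A = V$ into the non-vanishing of each $\phi_v$; once that is in place the proof reduces to the standard "avoid finitely many hyperplanes" trick, and the inductive step adds no new difficulty.
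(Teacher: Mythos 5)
Your proof is correct, but it takes a genuinely different route from the paper's. You translate fullness into a hyperplane-avoidance problem in coefficient space: the core-graph hypothesis makes each $H_v = \ker\phi_v$ a proper subspace of $\RR^k$, and since a finite union of proper subspaces over an infinite field cannot exhaust $\RR^k$, you can pick coefficient vectors outside $\bigcup_v H_v$ (and, inductively, outside the span of those already chosen) to obtain a basis of full kernel vectors in one conceptual stroke. The paper instead runs a greedy, constructive repair loop on an arbitrary starting basis: it locates the first vertex $\iota$ at which some basis vector $\x_\ell$ vanishes, finds another basis vector $\x_k$ with $\x_k(\iota)\neq 0$ (guaranteed by the core hypothesis), and replaces $\x_\ell$ by $\x_\ell + \alpha\x_k$ with $\alpha$ chosen large enough that no new zeros are created while the zero at $\iota$ is destroyed; iterating strictly decreases the number of zero entries. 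Your argument is cleaner and immediately generalises (it is the standard genericity trick), but the paper's explicit choice of $\alpha$ is what its subsequent Corollary reuses to get a basis of full vectors with \emph{integer} entries, simply by taking $\alpha$ to be a large integer; with your approach one would need the small extra remark that integer lattice points avoiding finitely many proper rational hyperplanes also exist. One minor point of care in your inductive step: you should say that you avoid the union of the $H_v$ \emph{and} the span of the previously chosen coefficient vectors simultaneously, which is exactly what you do --- so the step is sound as written.
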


\begin{proof}
Let $V(G) = \{1, \ldots, n\}$ and
let $\mathbf{x}_1, \ldots, \mathbf{x}_\eta$ be an arbitrary kernel basis. Let $\iota$ be the smallest integer (i.e.\  vertex label),
such that at least one of the entries $\mathbf{x}_1(\iota), \ldots, \mathbf{x}_\eta(\iota)$ is zero, and let $\mathbf{x}_\ell(\iota)$
be one of those entries.
As $G$ is a core graph,
at least one of the entries $\mathbf{x}_1(\iota), \ldots, \mathbf{x}_\eta(\iota)$ is non-zero; let us denote the first
such encountered entry by $\mathbf{x}_k(\iota)$.
We can replace vector $\mathbf{x}_\ell$ by $\mathbf{x}_\ell + \alpha  \mathbf{x}_k$, where $\alpha > 0$. If we pick $\alpha$ large enough, i.e.\ 
if 
\begin{equation} 
\alpha > \max \left\{  \frac{|\mathbf{x}_\ell(i)|}{|\mathbf{x}_k(i)|} \mid i \neq \iota, \mathbf{x}_k(i)\neq 0  \right \},
\end{equation} 
then
$\mathbf{x}_\ell(i) + \alpha  \mathbf{x}_k(i)$ will be non-zero for all $i$. No new zero entries were created in the
replacement process and at least
one zero was eliminated. We repeat this process until
no more zeros remain.
\end{proof}

\begin{corollary}
Each core graph admits a kernel basis that contains only full vectors with integer entries.
\end{corollary}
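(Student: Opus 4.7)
The plan is to obtain integer-entried vectors by strengthening the construction in Proposition~\ref{prop:1}. First I would produce an integer kernel basis $\mathbf{x}_1, \ldots, \mathbf{x}_\eta$ of $\ker A$. Such a basis exists because the adjacency matrix has entries in $\{-1, 0, 1\}$, so row-reducing $A$ over $\mathbb{Q}$ and clearing denominators in the resulting rational basis yields vectors in $\mathbb{Z}^n$.

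Next I would run the zero-elimination procedure from the proof of Proposition~\ref{prop:1}, with the single modification that at each step the real parameter $\alpha$ is restricted to be a positive integer. The key observation is that the requirement on $\alpha$ in that proof is a one-sided bound
\[
\alpha > \max \left\{ \frac{|\mathbf{x}_\ell(i)|}{|\mathbf{x}_k(i)|} \mid i \neq \iota,\ \mathbf{x}_k(i) \neq 0 \right\},
\]
so infinitely many positive integers qualify. For any such integer $\alpha$, the replacement $\mathbf{x}_\ell \mapsto \mathbf{x}_\ell + \alpha \mathbf{x}_k$ preserves membership in $\mathbb{Z}^n$ while still introducing no new zero entries and removing the zero at position $\iota$, exactly as in the original argument.

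Iterating must terminate in finitely many steps and produce a kernel basis whose vectors are full and integer-valued. The one point that might look like an obstacle—checking that the proof of Proposition~\ref{prop:1} is compatible with an integrality constraint on $\alpha$—is resolved immediately by the one-sided nature of the bound, so no new ideas beyond Proposition~\ref{prop:1} and elementary linear algebra over $\mathbb{Z}$ are required.
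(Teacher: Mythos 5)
Your proposal is correct and follows essentially the same route as the paper: start from an integer kernel basis and rerun the zero-elimination process of Proposition~\ref{prop:1} with $\alpha$ chosen to be a (sufficiently large) positive integer, which is possible because the constraint on $\alpha$ is only a one-sided lower bound. You merely spell out in more detail why an integer basis exists and why the integrality restriction on $\alpha$ is harmless, which the paper states more tersely.
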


\begin{proof}
An integer basis always exists for an integer eigenvalue.
The replacement
process described in the proof of Proposition~\ref{prop:1} will keep all entries
integer if we choose an integer for the value of $\alpha$ at each step.
\end{proof}

\subsection{Switching equivalence of signed graphs}

Let $\Gamma = (G,\Sigma)$ be a signed graph over $G= (V,E)$ and let $U \subseteq V(G)$ be a set of its vertices. A \emph{switching} at $U$ is an 
operation that transforms $\Gamma = (G,\Sigma)$ to a signed graph $\Gamma^U = (G, \Sigma \symdif \partial U)$ where 
$\symdif$ denotes symmetric difference of sets and
\begin{equation}
\partial U = \{uv \in E \mid u \in U, v \notin U\}. 
\end{equation}
Note that $\partial U = \partial (V(G) \setminus U)$, $\Gamma^U = \Gamma^{V(G) \setminus U}$ and $(\Gamma^U)^U = \Gamma$. In fact, switching is an equivalence relation on the signed graphs with the same 
underlying graph.  Any graph $G$ can be regarded as a signed graph $\Gamma = (G, \emptyset)$. We extend this definition. Any signed graph is a \emph{sign-balanced} graph if it is switching equivalent to $\Gamma = (G, \emptyset)$, otherwise it is called \emph{sign-unbalanced} \cite{Zaslav,ZaslavDS9}.
 In the literature of nut graphs the term `balanced' has also been used with a different meaning \cite{feuerpaper}.

As observed, for instance in \cite{BeCiKoWa2019}, switching 
has an obvious linear algebraic description.

\begin{proposition}[\cite{BeCiKoWa2019}]
\label{prop:switchProp}
Let $A(\Gamma)$ be the adjacency matrix of signed graph $\Gamma$ and $A(\Gamma^U)$ be the corresponding adjacency matrix of the signed graph 
switched at $U$. Let $S = \diag(s_1,s_2, \ldots, s_n)$ be the diagonal matrix with $s_i = -1$ if $v_i \in U$ and $s_i = 1$ elsewhere.  Then
\begin{equation}
A(\Gamma^U) = S A(\Gamma) S.
\end{equation}
Since $S^T = S^{-1} = S$ we also have:
\begin{equation}
A(\Gamma) = S A(\Gamma^U) S.
\end{equation}
\end{proposition}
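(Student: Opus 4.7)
The plan is to prove the identity entry-wise by comparing $(SA(\Gamma)S)_{ij}$ with $A(\Gamma^U)_{ij}$ and then exploit the fact that $S$ is an involution to get the second equation for free.

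First I would write out the matrix product: since $S$ is diagonal with entries $s_i = \pm 1$, we have $(SA(\Gamma)S)_{ij} = s_i\, A(\Gamma)_{ij}\, s_j$. So everything reduces to checking that the sign factor $s_i s_j$ correctly records the change between $\Sigma$ and $\Sigma \symdif \partial U$ on the edge $v_iv_j$. I would split into cases according to how $\{v_i, v_j\}$ meets $U$. If both endpoints lie in $U$ or both lie outside $U$, then $s_i s_j = +1$ and the edge $v_iv_j$ does not belong to $\partial U$, so its membership in $\Sigma$ is unchanged by the symmetric difference; hence $A(\Gamma^U)_{ij} = A(\Gamma)_{ij} = s_i s_j A(\Gamma)_{ij}$. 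If exactly one endpoint lies in $U$, then $s_i s_j = -1$ and $v_iv_j \in \partial U$, so the edge flips sign under the symmetric difference (a present edge stays present but changes sign, while a non-edge stays a non-edge with value $0$); in either situation $A(\Gamma^U)_{ij} = -A(\Gamma)_{ij} = s_i s_j A(\Gamma)_{ij}$. Combining the cases yields $A(\Gamma^U) = S A(\Gamma) S$.

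For the second identity, I would observe that $S$ is a diagonal $\pm 1$ matrix, so $S^T = S$ and $S^2 = I$, giving $S^{-1} = S$. Multiplying $A(\Gamma^U) = S A(\Gamma) S$ on both sides by $S$ immediately produces $A(\Gamma) = S A(\Gamma^U) S$, which also matches the observation $(\Gamma^U)^U = \Gamma$ already noted in the text.

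There is no real obstacle here; the only thing to be slightly careful about is the case where $v_iv_j$ is \emph{not} an edge of $G$, since then both sides are $0$ regardless of $s_i s_j$, so the case split must cover (or harmlessly ignore) non-edges. Writing the sign analysis directly in terms of $A(\Gamma)_{ij}$ rather than in terms of $\sigma$ takes care of this automatically, which is the cleanest way to present the argument.
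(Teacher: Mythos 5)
Your proof is correct and complete: the entry-wise computation $(SA(\Gamma)S)_{ij} = s_i s_j A(\Gamma)_{ij}$, the case split on how $\{v_i,v_j\}$ meets $U$ (matching $s_i s_j = -1$ exactly with membership of $v_iv_j$ in $\partial U$, and handling non-edges trivially), and the use of $S^2 = I$ for the second identity are all the standard argument. The paper itself states this proposition as a citation and supplies no proof, so there is nothing to compare against; your verification is exactly what one would write.
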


Clearly, switching-equivalent signed graphs with the same underlying graph $G$ are cospectral. In particular, all sign-balanced
graphs with the same underlying graph $G$ are cospectral.

\begin{proposition}
\label{thm:scsizes}
Let $G$ be a connected graph on $n$ vertices and $m$ edges. There are $2^m$ signed graphs over $G$, there are $2^{m-n+1}$ 
switching equivalence classes, and each class has $2^{n-1}$ signed graphs.
\end{proposition}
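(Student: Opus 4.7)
The count $2^m$ is immediate: a signed graph over $G$ is specified by a map $\sigma\colon E \to \{-1,+1\}$, and there are exactly $2^m$ such maps. The plan for the two remaining claims is to exhibit switching as a group action of $\mathcal{P}(V(G))$ (under symmetric difference) on the set of signed graphs over $G$, and then compute orbit sizes via connectivity.

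First I would check that the map $(U,\Gamma) \mapsto \Gamma^U$ defines an action of the group $(\mathcal{P}(V),\symdif)$ on the set of $2^m$ signed graphs over $G$. The only nontrivial point is the identity $\partial(U_1 \symdif U_2) = \partial U_1 \symdif \partial U_2$, which is a quick verification by cases on which side of the partition $u$ and $v$ lie. Given this, $\Gamma^{U_1 \symdif U_2} = (\Gamma^{U_1})^{U_2}$ follows from $\Sigma \symdif \partial(U_1 \symdif U_2) = (\Sigma \symdif \partial U_1) \symdif \partial U_2$, so switching equivalence is literally the orbit relation for this action.

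Next I would compute, for a fixed signed graph $\Gamma$, its stabilizer $\mathrm{Stab}(\Gamma) = \{U \subseteq V \mid \partial U = \emptyset\}$. Observe this set does not depend on $\Gamma$: $\Sigma \symdif \partial U = \Sigma$ iff $\partial U = \emptyset$, i.e.\ iff no edge of $G$ has exactly one endpoint in $U$. This is precisely the condition that $U$ is a union of connected components of $G$. Here the hypothesis that $G$ is connected is used in an essential way: the only such $U$ are $\emptyset$ and $V$, so $|\mathrm{Stab}(\Gamma)| = 2$. By the orbit-stabilizer theorem, every switching class has size $2^n/2 = 2^{n-1}$.

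Finally, since the $2^m$ signed graphs partition into orbits of common size $2^{n-1}$, there are exactly $2^m/2^{n-1} = 2^{m-n+1}$ switching equivalence classes. The only real obstacle is the stabilizer computation, which is where connectivity enters; everything else is bookkeeping. (An alternative route avoiding group actions: argue directly that the map $U \mapsto \Gamma^U$ from $\mathcal{P}(V)$ to the class of $\Gamma$ is surjective by definition, and that $\Gamma^{U_1} = \Gamma^{U_2}$ iff $\partial(U_1 \symdif U_2) = \emptyset$ iff $U_1 = U_2$ or $U_1 = V \setminus U_2$, whence the fibres have size $2$ and the class has size $2^{n-1}$.)
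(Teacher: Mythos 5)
Your proof is correct, but it takes a genuinely different route from the paper's. The paper fixes a spanning tree $T$, counts the $2^{m-n+1}$ signings in which every tree edge is positive, shows by a path-parity argument that every switching class contains at least one such signing, and combines this with a count of the $2^{n-1}$ available switchings to conclude that each class contains exactly one canonical all-positive-$T$ representative. You instead realise switching as an action of the group $(\mathcal{P}(V),\symdif)$ on the $2^m$ signings, compute the stabilizer of an arbitrary $\Gamma$ to be $\{\emptyset,V\}$ --- this is exactly where connectedness enters --- and apply orbit--stabilizer. Your route is actually tighter on one point the paper leaves implicit: the paper's Step~(c) passes from ``there are $2^{n-1}$ distinct switchings'' to ``each class contains $2^{n-1}$ signed graphs'' without verifying that distinct switchings of a fixed $\Gamma$ produce distinct signed graphs, and that missing verification is precisely your stabilizer computation (equivalently, your observation that $\Gamma^{U_1}=\Gamma^{U_2}$ forces $\partial(U_1\symdif U_2)=\emptyset$, hence $U_1=U_2$ or $U_1=V\setminus U_2$). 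Your argument also generalizes verbatim to a graph with $c$ components: the stabilizer has order $2^c$, classes have size $2^{n-c}$, and there are $2^{m-n+c}$ of them. What the paper's spanning-tree approach buys, and yours does not, is an explicit canonical representative of each switching class --- the unique signing making $T$ all-positive --- which is exactly what Algorithm~\ref{alg-1} enumerates by ranging over $\Sigma\subseteq E(G)\setminus T$; that is why the authors route the count through the tree rather than through the group action.
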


\begin{proof}
The essential idea of using a spanning tree is present in the earlier literature, e.g.\ Lemma~3.1 in \cite{Zaslav}. Here we use it to find cardinalities
of the switching classes, which in turn is needed for the analysis
of Algorithm~\ref{alg-1}. At the beginning we choose a fixed but arbitrary
spanning tree $T$ in $G$. We divide the argument into four steps.

\vspace{0.5\baselineskip}
\noindent
{\bf Step (a)}: For a given connected graph $G$ (and the spanning tree $T$) there are $2^m$ different signed graphs. 
Indeed, we may choose any subset $\Sigma$ of edges $E$ and make all edges in $\Sigma$  negative.
Some of the signed graphs will have all edges of $T$ positive, while others will have
some edges of $T$ negative.

\vspace{0.5\baselineskip}
\noindent
{\bf Step (b)}: Among the $2^m$ signed graphs over $G$ exactly $2^{m-n+1}$  will have all edges of $T$ positive.
Indeed, while fixing $(n-1)$ edges of $T$ positive, any selection of the remaining $(m-n+1)$ non-tree
edges determines $\Sigma$. Such a selection can be done in $2^{m-n+1}$ ways.

\vspace{0.5\baselineskip}
Hence, Step (a) gives the total number of signed graphs while Step (b) gives the number of signed graphs
having all edges of $T$ positive. 

\vspace{0.5\baselineskip}
\noindent
{\bf Step (c)}: There are $2^{n-1}$ switchings available. Namely, any switching is determined by a pair $(U,V \setminus U)$, but
$(U,V \setminus U)$ is the same switching as $(V \setminus U,U)$. Hence we have to divide $2^n$, the number of subsets of $V$, by $2$. Thus,
each switching equivalence class contains $2^{n-1}$ signed graphs. 
Dividing the total number of signed graphs $2^m$ by the cardinality of each switching class $2^{n-1}$ we obtain
the number of different switching equivalence classes: $2^{m-n+1}$.

\vspace{0.5\baselineskip}
Every switching equivalence class of signed graphs over $G$ contains exactly one signed graph with all edges of the tree $T$ positive.
Recall that in any tree there is a unique path between any two vertices. Choose any vertex $w$ from $V$. 
Let $U$ be the set of vertices $v$ in $T$ that have an even number of negative edges on the unique $w-v$ path of $T$. 
Then $V \setminus U$ contains the vertices  $v$ that have an odd number of negative edges on the path from $w$ to $v$ along $T$.
The switching $(U,V \setminus U)$ will make $T$ all positive.  
Hence, each switching class has at least one signed graph that makes $T$ all positive. However, since the cardinality
under Step (c) is the same as under Step (b), namely $2^{m-n+1}$, 
we may deduce that
each switching class contains exactly one signed graph with the edges of the tree $T$ all positive.
\end{proof}

\subsection{Signed singular graphs, signed core graphs and signed nut graphs}

One may consider the kernel of the adjacency matrix of a \emph{signed} graph. 
The local condition \eqref{eq:locCon} generalises to
\begin{equation}
\sum_{u\sim v}x_u \sigma(vu) = 0
\label{eq:newlocCon}
\end{equation}
for each choice of a pivot vertex $v$, where 
$\sigma(uv)$ is the weight 
($\pm1$) of the edge between $u$ and $v$,
where $\x(u)$ has been written as $x_u$ for brevity and this convention will be used in the rest of the paper.
Note that definitions \eqref{eq:suppdef} and \eqref{eq:suppkerdef} can be extended
to a signed graph in a natural way.
A signed graph is a \emph{signed singular graph} if it has a zero as an eigenvalue.
A signed graph is a  \emph{signed core graph} if $\supp \ker A(\Gamma) = V(G)$.
A signed graph is a \emph{signed nut graph} if its adjacency matrix $A(\Gamma)$
has nullity one and $\ker A(\Gamma)$ contains a full kernel eigenvector.
It follows that a signed nut graph is a signed core graph of nullity one.

A graph $G$ on $n$ vertices and $m$ edges gives rise to $2^m$ distinct signed graphs. If we are interested in non-isomorphic signed graphs only, 
this number may be reduced by the symmetries that preserve signs. However, there is also an equivalence relation, to be described in the next section, among the signed 
graphs $\Gamma = (G, \sigma)$ over a given underlying graph that is very convenient, as it preserves several important signed invariants
and reduces the number of graphs to be considered.

\subsection{Switching equivalence and signed singular graphs}

\noindent
Proposition~\ref{prop:switchProp} has the following immediate consequence:
\begin{proposition}
Let $\Gamma$ be a signed graph and let $U \subseteq V(G)$.
If $\Gamma$ is singular and if $\x$ 
is any of its kernel vectors, then $\Gamma^U$ is singular and the vector $\x^U$ defined as
\begin{equation}
\x^U(v) = \begin{cases}
\phantom{-} \x(v), & \text{if } v \in V(G) \setminus U, \\
- \x(v), & \text{if } v \in U,
\end{cases}
\end{equation}
is a kernel eigenvector for  $\Gamma^U$.
\end{proposition}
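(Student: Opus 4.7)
The plan is to reduce the claim to a one-line computation using Proposition~\ref{prop:switchProp}. The first observation is that the transformation $\x \mapsto \x^U$ described coordinate-wise in the statement is exactly multiplication by the diagonal switching matrix $S = \diag(s_1, \ldots, s_n)$ from Proposition~\ref{prop:switchProp}: that is, $\x^U = S\x$. This is immediate from the defining cases, since $s_i = -1$ precisely when $v_i \in U$.

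Next I would combine this with the identity $A(\Gamma^U) = S A(\Gamma) S$ and the fact that $S^2 = I$ (because every diagonal entry of $S$ is $\pm 1$). Then
\[
A(\Gamma^U)\,\x^U \;=\; \bigl(S A(\Gamma) S\bigr)(S\x) \;=\; S A(\Gamma)\bigl(S^2\bigr)\x \;=\; S A(\Gamma)\x \;=\; S\,\mathbf{0} \;=\; \mathbf{0},
\]
so $\x^U \in \ker A(\Gamma^U)$. Provided $\x \neq \mathbf{0}$ (which we may assume since $\x$ is a kernel \emph{vector}, i.e.\ a nonzero element of $\ker A(\Gamma)$), $\x^U = S\x$ is also nonzero because $S$ is invertible. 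Hence $\Gamma^U$ is singular.

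I do not expect any real obstacle here; the statement is a direct corollary of Proposition~\ref{prop:switchProp} once one spots the identification $\x^U = S\x$. If one wanted to say slightly more (not required by the statement), the same argument shows that $\x \mapsto S\x$ is a linear isomorphism $\ker A(\Gamma) \to \ker A(\Gamma^U)$, so switching preserves the nullity and, since $\supp(S\x) = \supp(\x)$, it also preserves fullness of kernel eigenvectors; in particular, signed core and signed nut graphs are preserved under switching, which is the feature that will be exploited later in the paper.
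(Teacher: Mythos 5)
Your proof is correct and is exactly the argument the paper intends: the paper states this proposition as an ``immediate consequence'' of Proposition~\ref{prop:switchProp} without writing out the computation, and your identification $\x^U = S\x$ together with $A(\Gamma^U)\x^U = SA(\Gamma)S^2\x = SA(\Gamma)\x = \mathbf{0}$ is precisely that omitted one-line verification.
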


This proposition is helpful in the study of singular graphs. Namely, it follows that many properties of signed graphs concerning singularity
hold for the whole switching equivalence class. 

\begin{corollary}
Let $\Gamma$ and $\Gamma'$ be two switching-equivalent signed graphs. The following holds:
\begin{enumerate}[label=(\arabic*)]
\item If one of the pair is singular, then the other is also singular. In addition, $\Gamma$ and $\Gamma'$ have the same nullity.
\item If one of the pair is a core graph, then the other is also a core graph. 
\item If one of the pair is a nut graph, then the other is also a nut graph.
\end{enumerate}
\end{corollary}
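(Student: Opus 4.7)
The plan is to prove everything by invoking the preceding proposition, which gives the explicit linear bijection $\x \mapsto \x^U$ between $\ker A(\Gamma)$ and $\ker A(\Gamma^U)$; all three items follow from structural properties of this map. Since switching is an equivalence relation (noted just after the definition of switching), it is enough to treat the case $\Gamma' = \Gamma^U$ for a single subset $U \subseteq V(G)$, and then iterate.

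First I would record that the map $\x \mapsto \x^U$ is precisely multiplication by the diagonal sign matrix $S$ from Proposition~\ref{prop:switchProp}. Because $S^2 = I$, this map is a linear bijection, hence induces an isomorphism $\ker A(\Gamma) \to \ker A(\Gamma^U)$. This immediately gives item (1): both kernels have the same dimension, so one graph is singular iff the other is, and in that case their nullities agree.

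For item (2), the key observation is that $\supp \x^U = \supp \x$, since flipping the sign of an entry never changes whether that entry is zero. Taking the union over a basis (or over the whole kernel), we obtain $\supp \ker A(\Gamma^U) = \supp \ker A(\Gamma)$. Hence one side equals $V(G)$ iff the other does, and the core property is preserved. For item (3), nullity one is preserved by (1), and fullness of a kernel vector is preserved by the same support identity: if $\x$ is a full kernel eigenvector of $\Gamma$, then $\x^U$ is a full kernel eigenvector of $\Gamma^U$, so $\Gamma^U$ is again a nut graph.

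There is no real obstacle here; the corollary is essentially a direct reading of the preceding proposition combined with the fact that the support of a vector is invariant under entrywise sign changes. The only thing to be slightly careful about is the iteration from the single-switching statement to arbitrary switching equivalence, which is handled by composing the bijections (equivalently, by replacing $S$ with a product of such diagonal sign matrices, which is again a $\pm 1$ diagonal matrix).
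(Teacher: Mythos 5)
Your proof is correct and matches the paper's (implicit) argument: the paper presents this corollary as an immediate consequence of the preceding proposition, and your explicit verification via the bijection $\x \mapsto S\x$ between kernels, together with the observation that supports are unchanged by entrywise sign flips, is exactly the intended reasoning. No issues.
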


In particular, this reduces the search for nut graphs to 
a search over distinct switching equivalence classes. The following fact may be useful. 

\begin{corollary}
Every switching equivalence class of signed nut graphs has exactly one representative that has kernel eigenvector with all  entries positive. 
\end{corollary}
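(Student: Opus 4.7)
The plan is to prove existence and uniqueness separately, leaning on the fact that in a signed nut graph the kernel is one-dimensional and spanned by a full vector, together with the sign-flipping description of switching given by the preceding proposition.

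For existence, I would pick any representative $\Gamma$ from the switching equivalence class. Its kernel is spanned (up to scalar) by a full eigenvector $\mathbf{x}$, so every entry $\mathbf{x}(v)$ is nonzero. Let
\[
U = \{v \in V(G) \mid \mathbf{x}(v) < 0\}.
\]
Applying the previous proposition, $\mathbf{x}^U$ is a kernel eigenvector of $\Gamma^U$, and by the definition of $U$ we have $\mathbf{x}^U(v) = |\mathbf{x}(v)| > 0$ at every vertex. Hence $\Gamma^U$ is the desired representative.

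For uniqueness, suppose that both $\Gamma^{U_1}$ and $\Gamma^{U_2}$ admit an all-positive kernel eigenvector. Since each has nullity one, its kernel is one-dimensional; in particular $\mathbf{x}^{U_i}$ must itself be a scalar multiple of the chosen all-positive eigenvector, so $\mathbf{x}^{U_i}$ is either entirely positive or entirely negative. Normalising $\mathbf{x}$ so that $\mathbf{x}^{U_1}$ is entirely positive forces $U_1 = \{v \mid \mathbf{x}(v) < 0\}$. Now $\mathbf{x}^{U_2}$ entirely positive likewise forces $U_2 = \{v \mid \mathbf{x}(v) < 0\} = U_1$, while $\mathbf{x}^{U_2}$ entirely negative forces $U_2 = \{v \mid \mathbf{x}(v) > 0\} = V(G) \setminus U_1$. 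In both cases one uses $\Gamma^{V(G) \setminus U_1} = \Gamma^{U_1}$ to conclude $\Gamma^{U_1} = \Gamma^{U_2}$.

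The only minor subtlety is keeping straight the two sources of sign ambiguity, namely that the kernel eigenvector is determined only up to a nonzero scalar and that switching at $U$ coincides with switching at $V(G) \setminus U$; once these are reconciled the proof is straightforward, and no step requires more than the previously established proposition on the action of switching on kernel vectors.
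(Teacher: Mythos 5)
Your proof is correct, and the existence half is exactly the paper's argument: switch at $U=\{v \mid \mathbf{x}(v)<0\}$ to obtain an all-positive kernel eigenvector. The one substantive difference is that the paper's proof stops there, leaving uniqueness implicit, whereas you supply it explicitly by using nullity one to force $\mathbf{x}^{U_i}$ to be all positive or all negative, pinning down $U_i$ as either $\{v\mid\mathbf{x}(v)<0\}$ or its complement, and then invoking $\Gamma^{U}=\Gamma^{V(G)\setminus U}$; this correctly handles both sources of sign ambiguity and makes your write-up more complete than the one in the paper.
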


\begin{proof}
Let $\Gamma$ be a signed nut graph and $\x$ be its kernel eigenvector. Let 
\begin{equation}
U = \{v \in V \mid \x(v) < 0 \}.
\end{equation}
The switching at $U$ gives rise to the switching-equivalent signed nut graph $\Gamma^U$ 
with an all-positive kernel eigenvector.
\end{proof}

The above corollary enables us to select for any signed nut graph $\Gamma = (G, \Sigma)$ a unique switching-equivalent
graph $\Gamma' = (G, \Sigma')$ s.t.\ the kernel eigenvector $\bf{x'}$ relative to $\Gamma'$ is given by 
${\bf x'}(v) = |{\bf x}(v)|$. This canonical choice of switching can be viewed in the more general setting of signed graphs.

Using the idea of the proof of Proposition~\ref{thm:scsizes} 
and a database of regular connected graphs of a given order \cite{McKay201494},
we may search for signed nut graphs of that order.
Let $F(n,\rho)$ be the number of connected regular graphs of order $n$ and degree $\rho$. In the worst case, Algorithm~\ref{alg-1} has to
check $2^{m-n+1}$  sign structures on each graph. Since $2m = n\rho$ this implies a maximum of $F(n,\rho)2^{m-n+1}$ tests. 

\renewcommand{\algorithmicrequire}{\textbf{Input:}}
\renewcommand{\algorithmicensure}{\textbf{Output:}}
\begin{algorithm}
	\caption{Given the class of graphs $\mathcal{G}_{n,\rho}$, i.e.\ the class of connected $\rho$-regular graphs of order $n$,
find a signed nut graph in this class.}
\label{alg-1}
\begin{algorithmic}[1]
\Require{$\mathcal{G}_{n,\rho}$, the class of all connected $\rho$-regular graphs of order $n$.}
\Ensure{A signed nut graph in $\mathcal{G}_{n,\rho}$ (or report that there is none).}
\ForAll{$G \in \mathcal{G}_{n,\rho}$}
\State $T \gets$ spanning tree of $G$ 
\ForAll{$\Sigma \subseteq E(G) \setminus T$}
\State $\Gamma \gets (G, \Sigma)$
\If{$\Gamma$ is a signed nut graph}
\State \textbf{return} $\Gamma$
\EndIf
\EndFor
\EndFor
\State \textbf{report} there is no signed nut graph in class $\mathcal{G}_{n,\rho}$
\end{algorithmic} 
\end{algorithm}

\section{Results}
 
Our contribution was prompted by recent interest in the study of families of nut graphs. 
An efficient strategy for generating nut graphs
of small order was published in 2018 \cite{CoolFowlGoed-2017} and the full collection of nut graphs found there for orders
up to 20 was reported in the House of Graphs \cite{hog}. 
For arbitrary simple graphs,
the list is complete for orders up to 12, and counts are give up to 13. A list of regular nut graphs for orders from 3 to 8 was
deposited in the same place. This list covers orders up to 22 and is complete up to order 14.
More recently, the orders for which regular nut graphs of degree $\rho$ exist have been established for $\rho \in \{3,4,5,6,7,8,9,10,11\}$. 
In~\cite{GPS} the set $N(\rho)$ was defined as the set consisting of all integers $n$ for which a $\rho$-regular nut graph
of order $n$ exists. There it was shown that
\begin{equation}
\begin{aligned}
N(1) & = N(2) = \emptyset, \\
N(3) & = \{12\} \cup \{2k \mid k\geq 9\}, \\
N(4) & = \{8,10,12\} \cup \{k \mid k\geq14\}.
\end{aligned}
\end{equation}
In \cite{Jan}, $N(\rho)$ was determined  for every $\rho$, $5 \leq \rho \leq 11$. Combining these results, we obtain the following.

\begin{restatable}[{\cite[Theorems 2, 3 and 7]{Jan}}]{theorem}{mainthm}
\label{thm_orders}
The following holds:
\begin{enumerate}
\item $N(1) = \emptyset$
\item $N(2) = \emptyset$
\item $N(3) =  \{12\} \cup \{2k \mid k\geq 9\}$
\item $N(4) = \{8,10,12\} \cup \{k \mid k\geq14\}$
\item $N(5) = \{2k \mid k\geq 5\}$
\item $N(6) = \{k \mid k\geq 12\}$
\item $N(7) = \{2k \mid k\geq 6\}$
\item $N(8) = \{12\} \cup \{k \mid k\geq 14\}$
\item $N(9) = \{2k \mid k\geq 8\}$
\item $N(10) = \{k \mid k\geq 15\}$
\item $N(11) = \{2k \mid k\geq 8\}$
\end{enumerate}
\end{restatable}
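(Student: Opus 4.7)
The plan is to observe that this theorem is essentially a compilation: items~(3)--(4) are exactly the results quoted above from~\cite{GPS}, and items~(5)--(11) are exactly the case-by-case determinations carried out in~\cite{Jan}. So the proof reduces to (i) dispatching the two trivial cases $\rho=1,2$ directly, and (ii) citing the two prior works for the remaining degrees.

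For $\rho=1$, a connected $1$-regular graph on $n\geq 2$ vertices is just $K_2$, whose adjacency matrix has spectrum $\{+1,-1\}$ and is therefore non-singular. Hence no $1$-regular nut graph exists and $N(1)=\emptyset$. For $\rho=2$, a connected $2$-regular graph is a cycle $C_n$, whose eigenvalues are $2\cos(2\pi k/n)$ for $k=0,\dots,n-1$. The value $0$ occurs iff $4\mid n$, in which case it arises from the two indices $k=n/4$ and $k=3n/4$ and therefore has multiplicity $2$. Thus the nullity of $C_n$ is either $0$ or $2$, never $1$, and so $N(2)=\emptyset$.

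Having handled the two trivial cases, the plan is then simply to invoke the quoted description of $N(3)$ and $N(4)$ from~\cite{GPS} (already reproduced in the paragraph preceding the theorem statement) for items~(3)--(4), and to invoke the main results of~\cite{Jan} for items~(5)--(11), where each of the seven sets $N(5),\ldots,N(11)$ is explicitly determined by a combination of constructions (providing the orders $n$ in each set) and parity/eigenvalue arguments (excluding the remaining orders).

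The only potential obstacle is checking that the statements quoted here verbatim match the sets as written in~\cite{Jan}, in particular the exact ``starting'' values of $k$ in each arithmetic progression and the absence of sporadic orders below those thresholds; but this is purely a bookkeeping task since all the substantive arguments are carried out in the two cited papers. No new construction or non-existence argument is required in the present excerpt.
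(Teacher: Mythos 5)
Your proposal matches the paper's treatment exactly: the theorem is stated there with no separate proof, being justified only by the remark that it combines the results of \cite{GPS} (for $\rho\le 4$) and \cite{Jan} (for $5\le\rho\le 11$), which is precisely your citation-based argument. Your added direct verifications that $K_2$ is non-singular and that $C_n$ has nullity $0$ or $2$ are correct and a harmless (indeed welcome) supplement to the cases $N(1)=N(2)=\emptyset$ that the paper simply attributes to \cite{GPS}.
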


Note that, for each $\rho$ in the range $3 \leq \rho \leq 11$, the set $N(d)$ misses only a finite number of integer values. The question we tackle here is: 
which of the missing numbers can be covered by regular 
signed nut graphs? The main result of this paper is summarised  
in Table~{\ref{tab:pinky}} which embodies the results from Theorem \ref{thm_orders} above and the two new Theorems~{\ref{thm:main}}
and~\ref{thm:10} below.

\newcommand{\kC}{\checkmark}
\newcommand{\bXE}{{\small $\nexists$}}
\newcommand{\bX}{{\small $\nexists$}}
\newcommand{\rC}{\maltcross}
\newcommand{\mbg}{\cellcolor{gray!25!white}}

\begin{table}[!htb]
\centering
\renewcommand*{\arraystretch}{1.3}
\setlength{\tabcolsep}{5pt}
\begin{tabular}{|c|*{16}{|c}|}
\hline
\backslashbox{$\rho$}{$n$} & 5 & 6 & 7 & 8 & 9 & 10 & 11 & 12 & 13 & 14 & 15 & 16 & 17 & 18 & 19 & $\cdots$ \\ 
\hline\hline
3 & \mbg & \bXE & \mbg & \bXE & \mbg & \bXE & \mbg & \kC \rC & \mbg & \rC & \mbg & \rC & \mbg & \kC \rC & \mbg & $\Rightarrow$ \\
\hline
4 & \rC & \bXE & \rC & \kC \rC & \rC & \kC \rC & \rC & \kC \rC & \rC & \kC \rC & \kC \rC & \kC \rC & \kC \rC & \kC \rC & \kC \rC & $\Rightarrow$ \\
\hline
5 & \mbg & \bX & \mbg & \bXE & \mbg & \kC \rC & \mbg & \kC \rC & \mbg & \kC \rC & \mbg & \kC \rC & \mbg & \kC \rC & \mbg & $\Rightarrow$ \\
\hline
6 & \mbg & \mbg & \bX & \rC & \rC & \rC & \rC & \kC \rC & \kC \rC & \kC \rC & \kC \rC & \kC \rC & \kC \rC & \kC \rC & \kC \rC & $\Rightarrow$ \\
\hline
7 & \mbg & \mbg & \mbg & \bX & \mbg & \rC & \mbg & \kC \rC & \mbg & \kC \rC & \mbg & \kC \rC & \mbg & \kC \rC & \mbg & $\Rightarrow$ \\
\hline
8 & \mbg & \mbg & \mbg & \mbg & \rC & \rC & \rC & \kC \rC & \rC & \kC \rC & \kC \rC & \kC \rC & \kC \rC & \kC \rC & \kC \rC & $\Rightarrow$ \\
\hline
9 & \mbg & \mbg & \mbg & \mbg & \mbg & \bX & \mbg & \rC & \mbg & \rC & \mbg & \kC \rC & \mbg & \kC \rC & \mbg & $\Rightarrow$ \\
\hline
10 & \mbg & \mbg & \mbg & \mbg & \mbg & \mbg & \bX & \rC & \rC & \rC & \kC \rC & \kC \rC & \kC \rC & \kC \rC & \kC \rC & $\Rightarrow$ \\
\hline
11 & \mbg & \mbg & \mbg & \mbg & \mbg & \mbg & \mbg & \bX & \mbg & \rC & \mbg & \kC \rC & \mbg & \kC \rC & \mbg & $\Rightarrow$ \\
\hline 
\end{tabular}
\caption{Existence of regular signed and unsigned
nut graphs of order $n$ and degree $\rho$. Notation:
\kC \ldots there exists a sign-balanced nut graph;
\protect\rC \ldots there exists a sign-unbalanced nut graph;
\bXE \ldots there exists no nut graph, signed or unsigned.
Shaded squares denote parameters for which no simple graph exists. Arrows $\Rightarrow$ indicate that the table continues 
to infinity to the right: regular sign-balanced and sign-unbalanced nut graphs with the given degree $\rho$
exist for all higher values of $n$ (even $\rho$),
or all higher
even values of $n$ (odd $\rho$).
Examples for all cases marked with \protect\rC\ are given in the appendices.
}
\label{tab:pinky}
\end{table}

\begin{theorem}
\label{thm:main}
Let $N_s(\rho)$ denote the set of orders $n$ for which there
exists no $\rho$-regular sign-balanced nut graph 
but there exists a $\rho$-regular sign-unbalanced nut graph.
\begin{enumerate}
\item $N_s(1) = \emptyset$ 
\item $N_s(2) = \emptyset$ 
\item $N_s(3) = \{14,16\}$ 
\item $N_s(4) = \{5,7,9,11,13\}$ 
\item $N_s(5) = \emptyset$
\item $N_s(6) = \{8,9,10,11\}$ 
\item $N_s(7) = \{10\}$ 
\item $N_s(8) = \{9,10,11,13\}$ 
\item $N_s(9) = \{12, 14\}$ 
\item $N_s(10) = \{12, 13, 14\}$ 
\item $N_s(11) = \{14\}$ 
\end{enumerate}
\end{theorem}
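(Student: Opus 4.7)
The plan is to treat the theorem as an enumeration guided by Theorem~\ref{thm_orders}. Fix $\rho \in \{3,\dots,11\}$. From Theorem~\ref{thm_orders} we read off the finitely many orders $n$ at which no unsigned $\rho$-regular nut graph exists; call this finite collection $M(\rho)$. By definition, $N_s(\rho) \subseteq M(\rho)$, so it suffices, for each $n \in M(\rho)$, to decide whether some signed $\rho$-regular graph of order $n$ is a nut graph, and, if so, to verify that the signed nut graph produced is proper (which is automatic whenever $n \in M(\rho)$, since a traditional signed nut graph is switching-equivalent to an unsigned nut graph on the same underlying graph, contradicting $n \in M(\rho)$).

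For each $n \in M(\rho)$, I would run Algorithm~\ref{alg-1} over the class $\mathcal{G}_{n,\rho}$ of connected $\rho$-regular graphs of order $n$, drawn from McKay's database \cite{McKay201494}. By Proposition~\ref{thm:scsizes}, fixing a spanning tree $T$ of $G \in \mathcal{G}_{n,\rho}$ reduces the search from $2^m$ signed graphs to one representative per switching class, namely the $2^{m-n+1}$ assignments of signs to the $m-n+1$ non-tree edges; and by the corollary following Proposition~\ref{prop:switchProp}, the property of being a signed nut graph is a switching-class invariant, so this sweep is lossless. For every $(n,\rho)$ listed as a member of $N_s(\rho)$ in the theorem, I expect the algorithm to return an explicit signed nut graph; the witness can then be recorded (e.g.\ by listing the negative edges on some canonical drawing). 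For every $(n,\rho)$ with $n \in M(\rho)$ but $n \notin N_s(\rho)$, the algorithm terminates with the report ``no signed nut graph'', giving a finite proof by exhaustion. The cases flagged \bX{} in Table~\ref{tab:pinky} are further covered by Theorem~\ref{thm:10}, which handles them without a case-by-case computer search.

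The main obstacle is bookkeeping and cost control rather than conceptual difficulty. The total work is bounded by $\sum_{n\in M(\rho)} F(n,\rho)\,2^{m-n+1}$ with $2m = n\rho$; this is manageable for the small $n$ values appearing in each $M(\rho)$ (the largest is $n=16$ at $\rho=3$ with its rather large $F(16,3)$, and similar outliers at $\rho=6,8,10$), but it is the step where care is needed to ensure the enumeration of $\mathcal{G}_{n,\rho}$ is complete and the nullity/full-support test is applied correctly. For the positive entries one additionally has to certify properness: this amounts to checking that no switching of the witness yields the all-positive signing, which is immediate when $n\in M(\rho)$ by the observation in the first paragraph, but should be stated for the record.

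Finally, the statements of items (1)--(9) of the theorem are precisely the complements, inside $M(\rho)$, of the cells marked \bXE{} and \bX{} in Table~\ref{tab:pinky}, and a row-by-row reading of the table against Theorem~\ref{thm_orders} confirms the nine listed sets. The theorem therefore follows by combining Theorem~\ref{thm_orders} (to locate the gaps), Theorem~\ref{thm:10} (to eliminate the \bX{} cells without computation), the exhaustive runs of Algorithm~\ref{alg-1} on the remaining finitely many pairs $(n,\rho)$, and the properness argument above.
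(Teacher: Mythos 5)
Your proposal follows essentially the same route as the paper: Theorem~\ref{thm_orders} restricts attention to the finitely many gap orders, Theorem~\ref{thm:10} disposes of the complete-graph cells, properness is automatic at a gap order, and the remainder is a computer search with Algorithm~\ref{alg-1} over one representative per switching class. The one point the paper handles differently is that for the denser rows exhaustion is not actually manageable (e.g.\ $\rho=10$, $n=14$ gives $F(n,\rho)\,2^{m-n+1}$ with $2^{m-n+1}=2^{57}$ per graph); the authors instead certify existence there by exhibiting heuristically constructed witnesses (a few planted negative edges, or a negative Hamiltonian cycle added to an ordinary nut graph), which suffices because every cell requiring genuine non-existence-by-exhaustion lies in a row sparse enough for Algorithm~\ref{alg-1} to terminate.
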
 

\begin{proof}
Case $N_s(1) = \emptyset$ is trivial. The only possible graph is switching-equivalent to $K_2$ which is not a nut graph.
Case $N_s(2) = \emptyset$ is also straightforward. The only possible graphs are switching equivalent to the cycle $C_n$
or the M{\"o}bius cycle $M_n$ (a cycle where exactly one edge carries weight $-1$). It is well known that
$\{\eta(C_n), \eta(M_n)\} = \{0, 2\}$ (see, e.g.\ \cite{Fowler02}) and hence neither is a nut graph.
 
From Theorem~\ref{thm_orders} we have only a finite number of further cases to check (all those pairs $(n, \rho)$ that do not
correspond to \kC\ in Table~\ref{tab:pinky}) in order to establish the theorem.
For some sets of parameters $(n,\rho)$, we were immediately able
to prove existence/non-existence by search using the straightforward Algorithm~\ref{alg-1}. 
For some parameter combinations,  
exhaustive search was unfeasible, but in these cases an example was generated by heuristic approaches.
For some cases this involved  planting a small number of negative edges in a general graph. 
For others, a negative hamiltonian cycle or perfect matching was added to an unsigned nut graph.
\end{proof} 

All signed graphs mentioned in the proof can be found in Appendix~\ref{sec:appA}.
Two further theorems extend the results of Theorem~{\ref{thm:main}} 
to infinity along the leading diagonals of the table.

\begin{theorem}
\label{thm:10}
Let $\Gamma$ be a signed graph whose underlying graph is $K_n$. If $\Gamma$ is a signed nut graph, then $n \equiv 1 \pmod 4$.
Moreover, for each $n \equiv 1 \pmod 4$, there exists a signed nut graph with underlying graph $K_n$.
\end{theorem}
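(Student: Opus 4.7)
The plan is to exploit the decomposition $A(\Gamma) = 2\,A(H) + I - J$, where $H \subseteq K_n$ is the subgraph of positive edges of $\Gamma$. This identity (verified entry-by-entry) reduces both directions to the eigenstructure of the integer matrix $A(H)$.

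For necessity, I would compute $\det A(\Gamma) \bmod 4$. Treating $f(t) = \det\bigl((I - J) + t\,A(H)\bigr)$ as a polynomial in $t$, one has $f(2) \equiv f(0) + 2 f'(0) \pmod 4$, since every monomial $a_k 2^k$ with $k \ge 2$ is divisible by $4$. The eigenvalues of $I - J$ are $1-n$ (once) and $1$ (with multiplicity $n-1$), giving $f(0) = 1-n$ and $\operatorname{adj}(I-J) = (1-n) I + J$. By Jacobi's formula,
\[
f'(0) = \operatorname{tr}\bigl(\operatorname{adj}(I-J)\,A(H)\bigr) = (1-n)\operatorname{tr}(A(H)) + \operatorname{tr}\bigl(J\,A(H)\bigr) = 0 + 2\,|E(H)|.
\]
So $\det A(\Gamma) \equiv (1-n) + 4\,|E(H)| \equiv 1-n \pmod 4$. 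A signed nut graph has singular adjacency matrix, so $\det A(\Gamma) = 0$, forcing $n \equiv 1 \pmod 4$.

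For the converse, at each $n \equiv 1 \pmod 4$ I would take $H$ to be any $\rho$-regular graph on $n$ vertices with $\rho = (n-1)/2$ (a non-negative even integer; e.g., the circulant with connection set $\{\pm 1, \pm 2, \ldots, \pm (n-1)/4\}$), and declare the edges of $H$ positive and the remaining edges of $K_n$ negative. Then $A(\Gamma)\mathbf{1} = 2\rho\,\mathbf{1} + \mathbf{1} - n\,\mathbf{1} = \mathbf{0}$, so the full vector $\mathbf{1}$ lies in $\ker A(\Gamma)$. To show the nullity is exactly $1$: if $v \in \ker A(\Gamma)$ with $v \perp \mathbf{1}$, then $Jv = 0$ forces $2\,A(H)\,v + v = 0$, so $-\tfrac{1}{2}$ would be an eigenvalue of $A(H)$ — impossible, since eigenvalues of an integer matrix are algebraic integers while $-1/2$ is not (its minimal polynomial $2x+1$ is non-monic).

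I expect the main subtlety to be the mod-$4$ determinant computation; using Jacobi's formula with the adjugate (rather than the inverse) keeps all quantities integral and avoids any issue with whether $I - J$ is invertible. Once that step is in place, both the construction and the nullity argument are routine.
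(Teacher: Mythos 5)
Your proof is correct, and it takes a genuinely different route from the paper's. For necessity the paper argues via a full integer kernel eigenvector: after scaling so that some entry is odd, the local condition at each vertex of $K_n$ gives parity constraints on the number of odd entries, which rule out $n$ even outright and rule out $n\equiv 3\pmod 4$ by a Handshaking Lemma count on the subgraph of positive edges. Your congruence $\det A(\Gamma)\equiv 1-n\pmod 4$, derived from the decomposition $A(\Gamma)=2A(H)+I-J$ and Jacobi's formula with the adjugate, collapses that case analysis into a single line, never mentions eigenvectors, and makes explicit that mere singularity of a signed $K_n$ already forces $n\equiv 1\pmod 4$. For sufficiency the paper's witness is an apex vertex joined positively to $k$ copies of $P_4$ whose internal edges are negative, with nullity one checked by solving the local conditions explicitly; your witness (positive edges forming a $\frac{n-1}{2}$-regular circulant, all other edges negative) has the all-ones kernel vector, and your nullity argument --- $2A(H)+I$ is nonsingular because $-1/2$ is not an algebraic integer, or equivalently because $\det\bigl(2A(H)+I\bigr)\equiv\det I\equiv 1\pmod 2$ --- is shorter and works uniformly for \emph{any} $\frac{n-1}{2}$-regular choice of $H$, so it produces a whole family of examples at once. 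The only step worth spelling out is that, since $A(\Gamma)$ is symmetric and $\mathbf{1}$ lies in its kernel, the kernel splits orthogonally as $\operatorname{span}(\mathbf{1})\oplus\bigl(\ker A(\Gamma)\cap\mathbf{1}^{\perp}\bigr)$, so treating only the case $v\perp\mathbf{1}$ does complete the nullity computation.
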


\begin{proof}
Let $n = 4k + q$, where $0 \leq q \leq 3$. We divide the proof into three parts: (a) $q \in \{0, 2\}$, (b) $q = 3$, and (c) $q = 1$.

Assume  that $\Gamma$ is singular. Let $\bf x$ be a full kernel eigenvector (existence established by Proposition \ref{prop:1}).  We may assume that $\bf x$ is a  non-zero integer vector.
If $\bf x$ has no odd coordinate, we may multiply $\bf x$ by an appropriate power of $\frac{1}{2}$ so that at least one coordinate becomes odd. 
We call vertex $s$ even if $x_s \equiv 0 \pmod 2$ and odd if $x_s  \equiv1 \pmod 2$.
The local condition \eqref{eq:newlocCon}
at $r$ implies that 
there is an even number of odd vertices around $r$ and hence, together with $r$, an odd number of odd vertices in total
for a presumed $K_n$ nut graph. 

Case (a): $q \in \{0, 2\}$. This means that $n$ is even.
The signed graph $\Gamma$
must have an even vertex $t$.
The local condition at $t$ implies that 
there is an even number of odd vertices around $t$, and hence an even number of odd vertices in total, a contradiction.
This rules out the existence of signed complete nut graphs for $q \in \{0, 2\}$.

Case (b): $q \in 3$. In this case all entries $x_s$ are odd:
the parity of the sum in \eqref{eq:newlocCon} is opposite for even and odd vertices.

Let $m_+$ denote the number of edges in $\Gamma$ with positive sign. For each vertex $s$, let $\rho_+(s)$ and $\rho_-(s)$, respectively, denote the number of edges with positive
sign and negative sign that are incident with $s$. Note that $\rho_+(s) + \rho_-(s) = n - 1$.
Summing local conditions over pivots $r$:
\begin{equation}
0 = \sum_r  \rho_+(r) x_r - \sum_{r}\rho_-(r) x_r
\label{eq:rhominuseq}
\end{equation}
Hence,
\begin{equation}
\begin{aligned}
0 & =  \sum_r \{  \rho_+(r) -  \rho_-(r) \} x_r = \sum_r \{ 2 \rho_+(r)  -n + 1 \} x_r  
\end{aligned}
\end{equation}
and
\begin{align}
  \sum_r  \rho_+(r) x_r & = \frac{n - 1}{2}   \sum_r x_r =  \{2k + 1  \}   \sum_r x_r. \label{eq:evenodd}
\end{align}
The RHS of  \eqref{eq:evenodd} is an odd number as it is a product of two odd numbers, hence $\sum_r  \rho_+(r) x_r $
is odd. Therefore, the subgraph with only positive edges 
has an odd number of vertices with odd degree. By the Handshaking Lemma this is impossible.

Case (c): $q = 1$ is the only remaining possibility and the first part of the theorem follows, provided such nut graphs exist.
Now we construct a signed nut graph for each $n$ of the form $n = 4k + 1$.
A signed graph $\Gamma$ is constructed from $K_{4k+1}$ as follows. Partition the vertex set of $K_{4k+1}$ into a single vertex $r = 0$ and $k$ subsets
of $4$ vertices for $k$ copies  of the path $P_4$. Change the signs of all edges internal to each $P_4$ to $-1$.
To construct a kernel eigenvector $\bf x$ of $\Gamma$ for $\lambda = 0$ place $+1$ on vertex $0$, then entries $-1,+1,+1,-1$ on each $P_4$. We denote
by $1,2,3,4$ the vertices of the first $P_4$, by $5,6,7,8$ the vertices of the second $P_4$, etc.
Owing to the symmetry of $\Gamma$ (since automorphisms of $\Gamma$ preserve edge-weights) 
there are only three vertex types to be considered. 
\begin{enumerate}
\item For $r = 0$ all weights $\sigma(rs) = 1$ and exactly half of the $x_s$ are equal to $1$ and the other half are equal to $-1$. Hence, \eqref{eq:newlocCon}
is true in this case.
\item The vertex $r$ may be an end-vertex of any of the $k$  paths $P_4$.  The net contribution of the three remaining vertices of $P_4$ is $-1$, and all contributions
of other paths cancel out. Taking into account the edge to vertex $0$, the weighted sum in \eqref{eq:newlocCon}
 is indeed equal to $0$.
\item The vertex $r$ may be an inner vertex of any of the $k$  paths $P_4$.  Again, the net contribution of the three remaining vertices of $P_4$ is $-1$, and by the
same argument, the weighted sum in \eqref{eq:newlocCon}
is again equal to $0$.
\end{enumerate}
\par\noindent
As $\bf x$ is a full kernel eigenvector, $\Gamma$ is a signed \emph{core} graph.
 
It remains to prove that $\Gamma$ is a signed nut graph, i.e.\ with nullity $\eta(\Gamma) = 1$. This is done by showing that the constructed 
 full kernel eigenvector $\bf x$ is the only eigenvector for $\lambda = 0$ (up to a scalar multiple).
  First, note that all edges incident with vertex $0$ have weight $+1$. Hence for $r = 0$, \eqref{eq:newlocCon}
becomes:
\begin{equation}
\sum_{s \neq 0}x_s = 0
\end{equation}
It follows that
\begin{equation}
\sum_{s=0}^{n}x_s = x_0
\end{equation}
Now consider any path $P_4$ with vertices, say, $1,2,3,4$. Note that vertices $1$ and $4$ fall into one symmetry class while vertices $2$ and $3$ are in the other.  The local conditions are:
\begin{align}
0 = x_0 - x_2 + \sum_{s\neq 0,1,2}x_s  & = x_0 - x_1 - 2x_2  \\
0 = x_0 - x_1 -x_3 + \sum_{s\neq 0,1,2,3}x_s &  = x_0 - 2x_1 - x_2-2x_3  \\
0 = x_0 - x_2  - x_4 + \sum_{s\neq 0,2,3,4}x_s &  = x_0 - 2x_2 - x_3 -2x_4  \\
0 = x_0 - x_3 + \sum_{s\neq 0,3,4}x_s  & = x_0 - 2x_3 - x_4 
\end{align}
It is straightforward to show that $x_1, \ldots , x_4$ are related to $x_0$ as:
\begin{equation}
x_1 = x_4 = -x_0, x_2 = x_3 = x_0
\end{equation}
Since this holds for all $k$ path graphs $P_4$ it follows that $\Gamma$ is a signed nut graph.
\end{proof}

We can also prove a theorem that accounts for the next diagonal of Table~\ref{tab:pinky}.
The graphs on this diagonal are $\rho$-regular $\rho \ge 3$ and $\rho = n-2$,
implying even order $n \ge 6$. 
They are therefore the cocktail-party graphs,
$\CP(n)$ (also known as hyperoctahedral graphs \cite[p.~17]{Biggs}, as they are $1$-skeletons of 
the cross-polytope duals of hypercubes).
Exhaustive search shows that there is neither a signed nor an unsigned nut graph for the case $\CP(6)$.  

\begin{lemma}
Let $\Gamma = (G, \Sigma)$ be a signed graph whose underlying graph $G$ is the cocktail-party graph of order $2p$, 
$\CP(2p) \cong \overline{p\ K_2}$, $p > 1$. Then  $\Gamma$ is not a sign-balanced nut graph.
\end{lemma}

\begin{proof}
The spectrum of  $\CP(2p)$ is $\sigma (\CP(2p)) = \{2p-2, 0^{(p)}, -2^{(p - 1)} \}$ \cite[p.~17]{Biggs} and so $\eta(\CP(2p)) = p > 1$.
Therefore, $G$ is not a nut graph, and $\Gamma$ is not switching-equivalent to a sign-balanced nut graph.
\end{proof}

\begin{theorem}
\label{thm:new1}
Let $\Gamma = (G, \Sigma)$ be a signed graph whose underlying graph $G$ is the cocktail-party graph of order $2p$, 
$\CP(2p) \cong \overline{p\ K_2}$. For each even $p$, $p \geq 4$, there exists at least one sign-unbalanced nut graph $\Gamma$.
\end{theorem}

\begin{proof}
Construct $\Gamma(G, \Sigma)$ with $G = \CP(2p)$ and $\Sigma$ defined as follows.
Let $K_{2p}$ be the complete graph with $V(K_{2p}) = \{0, 1, \ldots, 2p-1\}$. Take the Hamiltonian
cycle $[0, 1, 2, \ldots, 2p-1, 0]$ and assign weights  $0$ and $-1$ to alternate edges.
All edges not in the cycle have 
weight $1$.
To obtain the signed graph $\Gamma$, remove from $K_{2p}$ all 
edges that were given weight $0$. 
Note that $\Sigma = \{(1, 2), (3, 4), \ldots, (2p-3, 2p-2), (2p-1, 0)\}$ and
that every vertex has a unique antipodal partner in $\Gamma$; the partner
pairs are the non-edges $\{(0, 1), (2, 3),\ldots, (2p-2, 2p-1)\}$.

\begin{figure}[!htb]
\centering
\includegraphics[scale=0.6]{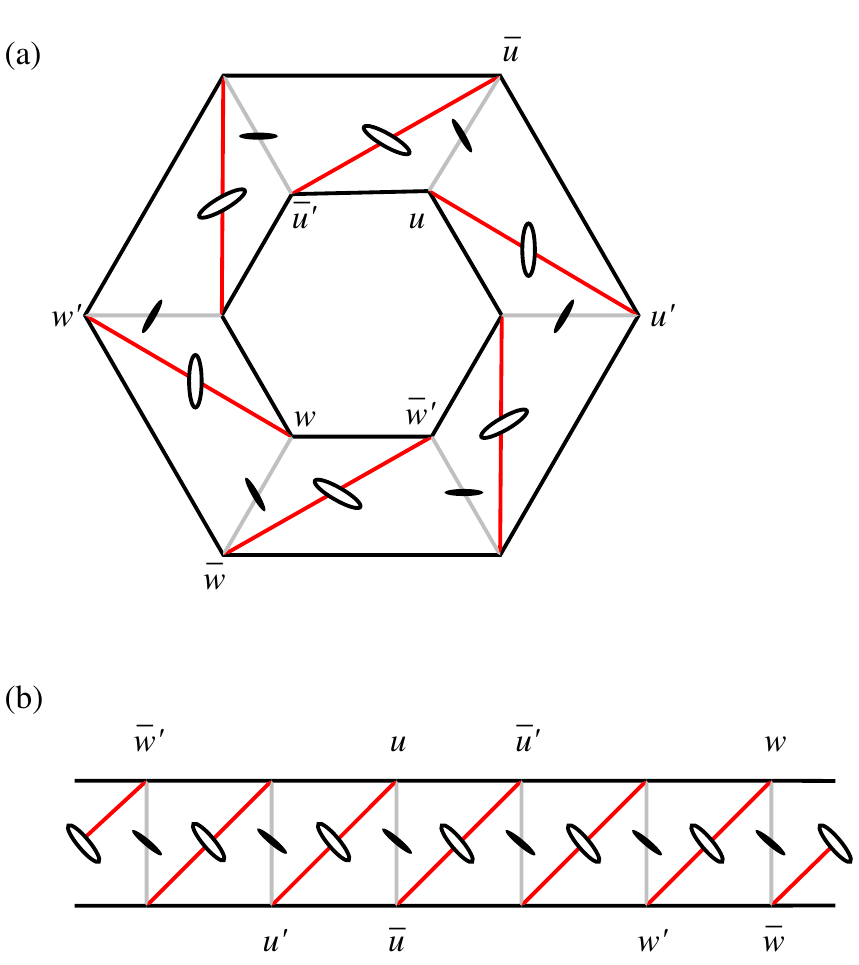}
\caption{
Construction of sign-unbalanced nut graphs $\Gamma(G,\Sigma)$ 
based on cocktail-party graphs $G = \CP(2p)$ where $2p$ is divisible by $4$.
(a)	Schlegel and (b) unfolded-net representations of 
the graph $\Gamma=(\CP(12),\Sigma)$ in a prism 
setting of the chiral point group ${\cal D}_6$. 
Black- and white-filled symbols indicate 
points where two-fold rotational axes enter and leave the prism. 
Axis 
$C_2\sp{\prime}(u)$ enters the prism midway between vertex $u$ and its graph-antipodal 
partner $\overline{u}$, to emerge between vertex $w$ and its graph-antipodal partner 
$\overline{w}$.  
Axis $C_2''(u)$ enters the prism 
between vertex 
$u$ and its neighbour on a negative edge $u\sp\prime$, to emerge between vertices 
$w$ and $w'$.
Red lines denote the edges
with weight $-1$, chosen as described in the proof of Theorem~\ref{thm:new1}.
Grey lines denote the {\lq missing edges\rq}  deleted 
from the complete graph $K_{2p}$.
Black lines denote edges of weight $+1$, though not all are shown: the graph has 
an edge of weight $+1$ connecting each vertex $w$ with all vertices other
than $\overline{w}$, $w'$ and $w$ itself. 
}
\label{fig:pwf_fig}
\end{figure}

With this assignment of $\Sigma$,
the local condition for an eigenvector $\x$ corresponding to 
an eigenvalue $\lambda$ of $\Gamma$ is
\begin{equation}
\label{eq:localc}
\lambda x_u = \big( \hspace{-10pt} \sum_{v \notin \{u, \overline{u}, u'\}} \hspace{-10pt} x_v\big) - x_{u'}
\end{equation}
for all $u \in V(G)$, where $\overline{u}$ is the antipodal partner of $u$, 
and $u'$ is the (unique) vertex connected to $u$ by a negative edge.
Condition \eqref{eq:localc} can be rewritten as
\begin{equation}
\label{eq:localc2}
\lambda x_u = \big( \hspace{-6pt} \sum_{v \in V(G)} \hspace{-6pt} x_v\big)   - x_u - x_{\overline{u}} - 2x_{u'},
\end{equation}
and summing over all $u \in V(G)$ gives
\begin{equation}
(\lambda - 2p + 4)\sum_{u \in V(G)}  x_u = 0 .
\label{eq:localc3}
\end{equation}
Hence, if $\lambda = 0$ the desired vector has $S = \sum_{v \in V(G)}  x_v = 0$
and \eqref{eq:localc2} reduces to
\begin{equation}
x_u + x_{\overline{u}} = -2x_{u'} = -2x_{\overline{u}'} = -(x_{u'} +x_{\overline{u}'}) ,
\label{eq:localc4}
\end{equation}
where the second equality follows by substituting $u$ for $\overline{u}$ in the first (with the labelling convention $\overline{u}' = (\overline{u})'$).

Note that the vector $\y$ with entries specified by
\begin{equation}
y_u = \begin{cases}
+ 1, & \text{if } u \equiv 0 \text{ or } 1 \pmod{4}, \\
- 1, & \text{otherwise},
\end{cases}
\label{eq:ydef}
\end{equation}
is both full and a kernel eigenvector of $\Gamma$. Therefore, $\Gamma$ is a signed core graph.
We now show that the nullity $\eta(\Gamma)$ is not greater than $1$.

Uniqueness of the kernel vector 
follows from a symmetry 
argument.  The dihedral group ${\cal D}_p$ is a subgroup of $\Aut|G|$ and 
$\Aut|\Gamma|$. 
This fact is given a spatial realisation by identifying the vertices of 
$\Gamma$ with those of a regular $[p]$-gonal prism, so that edges in $\Sigma$ correspond to 
consistently oriented diagonals on vertical quadrilateral faces, pairs of antipodal partners in 
$\Gamma$ lie on vertical edges of the prism, and positively weighted 
edges of $\Gamma$ are symmetrically 
distributed on faces and in the prism interior. 
The group ${\cal D}_p$ with even $p$ has two classes of two-fold 
rotational axis perpendicular to the principal ($C_p$) axis, 
 passing through midpoints of 
vertical edges, and centres 
of vertical faces, respectively. 
These generate $C_2\sp\prime$ and 
$C_2\sp{\prime\prime}$ rotations, respectively.
Figure~\ref{fig:pwf_fig} illustrates the 
case of $n = 2p = 12$. 

Consider the local rotation $C'_2(u)$. This exchanges $u$ with $\overline{ u}$ and 
$u'$ with $\overline{u}'$.  As the operation is an involution partitioning
the vertices into orbits of size $2$,
the eigenspace for any given 
eigenvalue
$\lambda$ of $\Gamma$ can be partitioned such 
that each eigenvector is either symmetric or antisymmetric under the action of 
$C_2\sp\prime(u)$. In particular, for kernel vector ${\x}$, we have 
\begin{equation} 
C_2\sp\prime(u) {\x} = \chi(C_2\sp\prime(u)) {\x}
\end{equation} 
with character $\chi(C_2\sp\prime(u)) = \pm 1$.  
From the local condition \eqref{eq:localc4} we know that $ x_{u\sp\prime} = x_{{\overline u}'}$ for 
every kernel vector of $\Gamma$, and so $\chi(C_2\sp\prime(u)) = +1$, and 
$x_{u} = x_{{\overline u}}$. Also from \eqref{eq:localc4}, we have $x_{u\sp\prime} = 
x_{{\overline u}\sp\prime} =-x_{u\sp\prime} = -x_{{\overline u}\sp\prime}$ for every kernel 
vector of $\Gamma$. As $ C_2\sp{\prime\prime}(u)$ 
exchanges $u$ and $u\sp\prime$, this relation implies $\chi(C''_2(u)) = -1$. 
Operations $ C_2\sp\prime(u)$ (respectively, $ C_2\sp{\prime\prime}(u)$) for $u$ form a 
class within the group ${\cal D}_p$, and hence 
have the same character 
$\chi(C_2\sp\prime)$ (respectively, $\chi(C_2\sp{\prime\prime})$).

We may propagate the vector ${\x}$ starting from the entry $x_u = a$, 
using consecutive $C_2\sp\prime(u)$ and $C_2\sp{\prime\prime}(u)$ operations, shifting
the axis each time by a rotation of
$\pi/p$ about the principal axis.  
This process covers every vertex with an entry $|x_v|=a$ and
returns consistently to $u$.
The resulting vector ${\x}$ is $a\y$, where $\y$ is the kernel 
vector given by \eqref{eq:ydef}; it has equal entries on antipodal partners in $G$,
but equal and opposite entries on pairs of vertices linked by a negative edge in $\Gamma$.  
The underlying graph $G$ is not a nut graph.
Therefore, $\Gamma$ is a sign-unbalanced nut graph.
\end{proof}

{\color{black}
\begin{theorem}
\label{thm:new1b}
Let $\Gamma = (G, \Sigma)$ be a signed graph whose underlying graph $G$ is the cocktail-party graph of order $2p$, 
$\CP(2p) \cong \overline{p\ K_2}$. For each odd $p$, $p \geq 5$, there exists at least one sign-unbalanced nut graph $\Gamma$.
\end{theorem}

\begin{proof}
\begin{figure}[!b]
\centering
\includegraphics[scale=0.6]{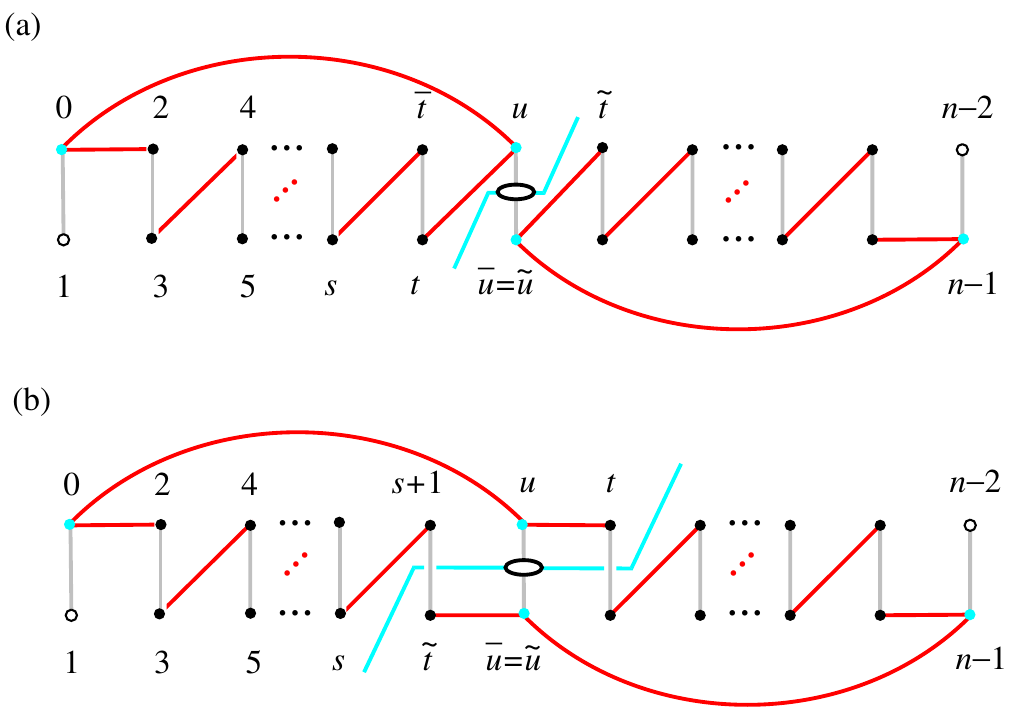}
\caption{
Construction of sign-unbalanced nut graphs $\Gamma(G,\Sigma)$ 
based on cocktail-party graphs $G = \CP(2p)$ where $2p$ is not divisible by $4$.
(a) Signed graph $\Gamma_{a}$ for the  case $n = 2p = 4q+2$, with $q$ even.
(b) Signed graph $\Gamma_{b}$ for the  case $n = 2p = 4q+2$, with $q$ odd.
In each case, the graph is shown in a shorthand ribbon format where each 
vertex lies directly above its antipodal partner.  
Colour coding of edges and non-edges is as in Figure~\ref{fig:pwf_fig}.
Vertices are colour coded by the number of incident edges  of weight $-1$: 
white for $0$,
black for $1$ and blue for $2$.
The elliptical symbol
indicates the axis of the unique two-fold rotation, $C_2(u)$ that swaps vertices
$u \equiv (n/2)-1$ and ${\overline u} \equiv (n/2)$.
(The notation ${\overline w}$ indicates the antipodal partner of vertex $w$, and 
${\widetilde w}$ its image under the rotation.)
The blue lines show the different splitting of 
$\Gamma_{a}$ and $\Gamma_{b}$ into fragments $H$ (to the left) 
and ${\widetilde H}$ (to the right).
Constructions (a) and (b) differ in the position of $t$, 
the end vertex of the path of negative edges:
$t$ is at position $u -1$ in $\Gamma_{a}$, but $u+2$ in $\Gamma_{b}$.
}
\label{fig:pwfC2_fig}
\end{figure}

Let $p= 2q+1$.
The proof separates into Case 1, where $q$ is even, and Case 2, where $q$ is odd.
In both cases we construct $\Gamma$ to have $p+1$ edges in $\Sigma$.
In both, a symmetry argument is used to split the cases further according to the character
($\pm1$) of a candidate kernel vector under a two-fold rotation.

\vspace{0.5\baselineskip}
\noindent Case 1: $n = 2p = 4q+2$, $q$ is even.
The candidate graph $\Gamma_{a}$ for this case is illustrated
in Figure~\ref{fig:pwfC2_fig}(a).
The signed graph retains only one 
axis of 
two-fold rotational symmetry, which
exchanges vertex $u$ with its antipodal partner $\overline{ u}$;
every vertex $w$ has a unique rotational partner 
${\widetilde w}$, but only for $u$ is $\overline{ w} = {\widetilde w}$.

The graph $\Gamma$
can be divided into two {\lq halves\rq} $H$ and ${\widetilde H}$
related by the $C_2(u)$ rotation (see Figure~\ref{fig:pwfC2_fig}(a)).
Fragment $H$ contains a path of three negative edges 
($2 - 0 - u - t$),
a set of $q-2$ isolated negative edges (with lower endpoints $3, 5, \ldots, s$), 
and exactly one vertex that is on no negative edge.

A full kernel vector of $\Gamma_{a}$ is easily found.
The vector $\z$ is defined to have entries in $H$
$z_0 = -z_1 = -z_2 =1$, and 
$z_{2w+1} = - z_{2w+2} = (-1)\sp{w}$
for $1 \le w \le q -1$, and entries in ${\widetilde H}$
given by $z_{\widetilde v} = -z_v$ .  The vector $\z$ constructed in this way
is full, with eigenvalue $0$
and character
$-1$ under the rotation $C_2(u)$. 

To check whether other  kernel vectors with this character exist,
consider a putative kernel vector ${\x}$.
All kernel vectors of $\Gamma_{a}$ obey the local condition (pivoting on vertex $w$)
\begin{equation}
0 = \big( \hspace{-6pt} \sum_{v \in V(G)} \hspace{-6pt} x_v\big)
 - x_{w} - x_{\overline w} -2 \hspace{-8pt} \sum_{\sigma(vw)=-1} \hspace{-8pt} x_v
 \label{eq:loco}
\end{equation}
Note that vertex $1$ (and $\widetilde{ 1} \equiv 2p - 2$) have no incident 
negative edges.
As the kernel vector $\x$ has character $-1$ under $C_2(u)$, 
$x_{\widetilde w} =-x_w$ for all $w$.
The sum $ S = \sum_{v \in V(G)} x_v$ vanishes by symmetry for vectors
with $\chi(C_2(u)) = -1$. 

Entries on vertices $0$ and $u = n/2 -1 = 2q$ are assigned variables $b = x_0$ 
and $a = x_u = x_{\overline u}$. 
From the local condition it is apparent that all entries in $\x$ are 
linear combinations of $a$ and $b$. 
Pivoting on $1, 0, 2, \ldots$ and on $u, t, {\overline t}, \ldots$ gives
\begin{equation}
\begin{aligned}
x_0 & = b, &
x_1 & = -b, &
x_2 & = -a, &
x_3 & = a - 2b, &
x_4 & = b, &  \quad  \ldots  \\
x_u & = a, &
x_{\overline u} & = -a, &
x_t   &     = x_{u-1} =  -b,  &
x_{\overline t} & = x_{u-2} = b - 2a, & 
x_{s} & = x_{u-3} = a, & \quad \ldots 
\end{aligned}
\label{eq:local}
\end{equation}
Now we propagate vector $\x$ starting from the left side of $H$ and determine 
the relationship between parameters $a$ and $b$ by requiring
consistency when the neighbourhood of $u$ is reached.
(N.B. For the smallest case, $n = 10, q = 2$, (\ref{eq:local}) with $x_t \equiv x_3$ 
already gives $a = b$ and $\x = a\z$.)

The fragment $H$ contains $(q-2)$ independent negative edges.
Consecutive application of the local condition to each vertex of this 
ladder of negative $K_2$ units, $(3, 4), (5,6), \ldots, (u-3,  u-1)$ gives 
a recurrence relation for the entries in $\bf x$
on top and bottom rows of Figure~\ref{fig:pwfC2_fig}(a),
with solution for $q \ge 4$:
\begin{equation}
\begin{aligned}
x_{2w+1} & = (-1)\sp{w+1} (w a - (w+1)b) & \quad & (1 \le w \le q-2) \\
x_{2w+4}  & = x_{2w+1} && (1 \le w \le q-2)
\end{aligned}
\label{eq:rec1}
\end{equation}
Consistency of  ({\ref{eq:local}})  and ({\ref{eq:rec1}}) 
for $x_{\overline t} \equiv x_{2w+4} = x_{2w+1}$  with $w=q-3$, requires $a = b$ and
and therefore $\x=a\z$. 
Hence, $\Gamma_{a}$ is a core with exactly one kernel vector that has
$\chi(C_2(u)) = -1$.

It remains to check that $\Gamma_{a}$ has no kernel vector with
$\chi(C_2(u)) = +1$, i.e. with
$x_u  = x_{\widetilde u} = x_{\overline u}$
We assign three variables $a = x_u$, $b = x_0$ and $c = x_1$.  
The extra parameter caters for the fact that the
sum $S$ is not now guaranteed to
vanish by symmetry. 
Pivoting on vertex $1$ shows that $S = b + c$. 
Proceeding as before, local conditions fix 
entries with lowest and 
highest labels in ${H}$:
\begin{equation}
\begin{aligned}
x_0 & = b, &
x_1 & = c, &
x_2 & = -a, &
x_3 & = a - b +c, &
x_4 & = b, & \quad  \ldots  \\
x_u & = a, &
x_{\overline u} & = a, &
x_t   &     = -a -{\textstyle{1\over2}}b +{\textstyle{1\over2}}c,  &
x_{\overline t} & = -a +{\textstyle{3\over2}b} +{\textstyle{1\over2}}c,  &
x_s & = x_{u-3}  = a, & \quad \ldots 
\end{aligned}
\label{eq:locala2}
\end{equation}
Working along the ladder of vertices in sequence from $3$,
recurrence relations are found:
\begin{equation}
\begin{aligned}
x_{4w+1} &= -2wa +(w+1)b -wc &\quad& (1 \le w \le q/2-1)  \\
x_{4w+3} &= (2w+1)a -(w+1)b +(w+1)c && (1 \le w \le q/2-2) \\
x_{4w} &= x_{4w-3} = x_{4(w-1)+1} && (1 \le w \le q/2-1)  \\
x_{4w+2} &= x_{4w-1} = x_{4(w-1)+3} && (0\le w \le q/2-2) 
\end{aligned}
\label{eq:rec2}
\end{equation}

There are three consistency conditions to be applied to reduce the number of free parameters.
Entries $x_{\overline t}$ and $x_s$ must match general relation ({\ref{eq:locala2}}),
and the sum of all entries must be consistent
with $S$ as obtained from the local condition at vertex $1$. 
The first two conditions are
\begin{align}
q(2a-b+c) & = 2a + 2c   \tag{C1}\label{eq:C1} \\
q(2a-b+c) & = 4a + b + 3c  \tag{C2}\label{eq:C2}
\end{align}
The third follows by summing the local condition for 
$\lambda = 0$ ({\ref{eq:loco}}) 
over all vertices of $\Gamma_b$, to give 
\begin{equation}
0 = (n-2)S - 2 \sum_{v \in V(G)} \rho_-(v) x_v,
\label{eq:sumc3}
\end{equation}
where $\rho_{-}$ is as introduced in the proof of
Theorem~\ref{thm:10}; see \eqref{eq:rhominuseq}.
Noting that $x_{w} = x_{\widetilde w}$
for character $+1$, $S = b + c$, and 
that apart from vertices $0$, $u$ and $1$, all vertices $w$
and their rotational partners have $\rho_-(w) = 1$, the result is 
\begin{equation}
(2q-3)(b+c) = 2a -4c. \tag{C3}\label{eq:C3}
\end{equation}
It is straightforward to show that conditions \eqref{eq:C1}
to \eqref{eq:C3} admit
only the trivial solution $a = b = c = 0$,
as the determinant of the coefficients in the $3\times 3$ set of linear equation vanishes only for $q = 0$,
therefore $\x$ is the null vector and
$\Gamma_a$ is a sign-unbalanced nut graph.

\vspace{0.5\baselineskip}
\noindent Case 2: $n = 2p = 4q+2$, $q$ is odd.
The candidate graph $\Gamma_{b}$ for this case is illustrated
in Figure~\ref{fig:pwfC2_fig}(b).
The signed graph retains the two-fold rotational symmetry
exchanging vertex $u = 2q$ with its antipodal partner ${\overline u}$,
but now the path of three negative edges has endpoint $t$ at
position $u+2$, leading 
to a different partition into fragments $H$ and ${\widetilde H}$. 
The argument follows the same lines as Case 1. A full kernel vector 
is readily constructed for $\Gamma_{b}$. 
Here, this is $\z'$, with entries in $H$ defined by
${z_0'} = -{z_1'} = -{z_2'} = {z_u'} = -{z_t'} =1$, and 
${z_{2w+1}'} = - {z_{2w+2}'} = (-1)\sp{w}$
for $1 \le w \le n/2 -4$, and entries in
${\widetilde H}$ found from
${z_v'} = -{z'_{\widetilde v}}$. 
By construction, this vector is antisymmetric under the reflection $C_2(u)$.

As in Case 1, we construct a general kernel vector $\x$
with $\chi(C_2(u)) = -1$,  working outwards from vertex $0$:
\begin{equation}
\begin{aligned}
x_0 & = b, &
x_1 & = -b, &
x_2 & = -a, &
x_3 & = a - 2b, &
x_4 & = b, & & & \quad  \ldots  \\
x_u & = a, &
x_{\overline u} & = -a, &
x_t   &     = -b,  &
x_{\widetilde t} & = b,  &
x_{s+1} & = 2a-b, &
x_s & = -a, & \quad \ldots 
\end{aligned}
\label{eq:locala3}
\end{equation}

\noindent
Here $s+1$ labels the final vertex of the ladder of $(q-2)$ 
disjoint negative edges, $s+1= u-2 = 2q-2$ (see Figure~\ref{fig:pwfC2_fig}(b)).

The recurrence relation for vertices in $H$ is 
\begin{equation}
\begin{aligned}
x_{2w} & = (-1)\sp{w+1} ( w a-(w+1)b) & \quad & (1 \le w \le q-2) \\
x_{2w+2}  & = x_{2w11} && (1 \le w \le q-2)
\end{aligned}
\label{eq:rec4}
\end{equation}
Consistency with the expression for $x_{s+1}$ requires $a = b$
and $\x=a\z'$.
Hence, $\Gamma_{b}$ is a core and has exactly one kernel vector that has $\chi(C_2(u)) = -1$.

It remains to check that $\Gamma_{b}$ has no kernel vector with $\chi(C_2(u)) = +1$.
Constructing a general kernel vector $\x$ with character $+1$, assigning parameters 
$a, b, c$ as in Case 1 gives
 the same entries as in \eqref{eq:locala2} and the same recurrence relations for the forward
 direction in $H$ as in
 \eqref{eq:rec2}, if we note that the entry for ${\overline t}$ for $\Gamma_a$ 
 now applies to ${s+1}$.
Consistency conditions \eqref{eq:C1} and \eqref{eq:C3} also apply here.
Condition \eqref{eq:C2} is replaced by \eqref{eq:C2prime}, as the offset
of vertex $s+1$ from $u$ is $2$ rather than $4$:
\begin{equation}
q(2a-b+c)  = 8a -4b + 2c'  \tag{$\text{C2}'$}\label{eq:C2prime}
\end{equation}
Again, it is straightforward to show that the three conditions admit
only the trivial solution $a = b = c = 0$, and $\x$ is the null vector. 
Hence, $\Gamma_{b}$ is  a sign-unbalanced nut graph.
\end{proof}
}

Taken together, Theorems \ref{thm:new1} and \ref{thm:new1b} characterise the sub-diagonal $(\rho, n) = (n - 2, n)$ of Table~\ref{tab:pinky}.
This diagonal is populated for all $n \geq 8$, and then by sign-unbalanced nut graphs only.

A further theorem deals with the next sub-diagonal of
Table~\ref{tab:pinky}, the cases $(\rho,n)$ of regular graphs with 
$\rho = n-3$. All graphs of this type are derived from the complete
graph $K_n$ by deletion of all edges of a union of disjoint cycles of total size $n$.
Numbers of graphs at each order follow OEIS \cite{OEISgeneral} sequence A008483 \cite{oeis}.

\begin{theorem}
\label{thm:new1c}
Let $G$ be a graph of order $n$ derived from $K_n$ by 
deletion of the edges of a union of cycles $C_p$ ($p\ge 3$), 
where $\sum p = n$. Then $G$ is not a sign-balanced nut graph.
\end{theorem}
\begin{proof}
The graph $G$ is $\rho$-regular with $\rho=n-3$.
Let the spectrum of $G$ be $\{\lambda_i\}$, $i=1, \ldots, n$. 
As $G$ is regular, the complement $\overline{G}$ has 
spectrum
$\{\theta_i\} = \{n-1-\rho, -\lambda_2-1, \ldots, -\lambda_n-1\}$,
$i=1, \ldots, n$ \cite[Section~1.3.2, p.~4]{haemers}.
Hence, the nullity of $G$ is equal to the multiplicity of eigenvalue $-1$ in 
the spectrum of $\overline{G}$.
A cycle has either two eigenvalues $-1$ ($p \equiv 0 \pmod 3$) or none (otherwise).
Therefore, the nullity of $G$ is even, and $G$ is not a nut graph.
\end{proof} 

For the case $n=7$, a sign-unbalanced nut graph based on
$G = K_7 - C_7$ with only one negative edge exists 
(see  Appendix~\ref{sec:appA}), but 
a more symmetrical choice of $\Sigma$ has
a cycle of $7$ negative edges, and all entries of the kernel vector equal.

\section{A construction for signed nut graphs}

Theorem {\ref{thm:main}}
has answered our initial question, in that
if we consider ordinary graphs as special cases of signed graphs, we need only to perform a computer search for existence of 
signed nut graphs for those values of $n$ for which no ordinary nut graph exists. 
 However, if we want to search for sign-unbalanced nut graphs with the intention of
determining the orders for which
a sign-unbalanced nut graph exists, 
then methods are needed for
generating larger signed nut graphs.
There are several known constructions that take a nut graph and produce a larger nut graph \cite{Sc2008}. 
We will revisit one such here and extend it
to signed graphs. 
This is the so-called Fowler construction for enlarging unweighted nut graphs.

Let $G$ be  a graph and $v$ a vertex of  degree $\rho$.
Let $N(v) = \{u_1, u_2, \ldots, u_\rho \}$.
Recall \cite{Jan,GPS} that the Fowler Construction, denoted $F(G, v)$, is a graph with 
\begin{equation}
\begin{aligned}
V(F(G, v)) = V(G) \sqcup \{ q_1, \ldots, q_\rho \} \sqcup \{ p_1, \ldots , p_\rho \}
\end{aligned}
\end{equation}
and
\begin{equation}
\begin{aligned}
E(F(G, v)) = (E(G) \setminus \{ vu_i \mid 1 \leq i \leq \rho \}) & \cup \{ q_ip_j \mid 1 \leq i, j \leq \rho, i \neq j \} \\
 & \cup \{ vq_i \mid 1 \leq i \leq \rho \} \cup \{ p_i u_i  \mid 1 \leq i \leq \rho \}.
\end{aligned}
\end{equation}

Here, we generalise this construction to signed graphs.

\begin{definition}
Let $\Gamma = (G, \sigma)$ be a signed graph and $v$ a
vertex of $G$ that has degree $\rho$. 
Then $F(\Gamma, v) = (F(G, v), \sigma')$, where for  $1 \leq i, j \leq \rho$, 
\begin{equation}
\sigma'(e) = \begin{cases}
1 & \text{if } e = vq_i,  \\
1 & \text{if } e = q_ip_j, \\
\sigma(vu_i) & \text{if } e = p_iu_i, \\
\sigma(e) & \text{otherwise},
\end{cases}
\end{equation}
is \emph{the Fowler Construction for signed graphs}.
\end{definition}

\begin{lemma}
Let $\Gamma = (G, \sigma)$ be a signed graph and $v$ a
vertex of $G$ that has degree $\rho$ and
let $\bf x$ be a kernel eigenvector for $\Gamma$. Then $\bf x'$, defined as
\begin{equation}
{\bf x'}(w) = \begin{cases}
-(\rho - 1){\bf x}(v) &  \textup{if } w = v, \\
\sigma(vu_i){\bf x}(u_i) &  \textup{if } w = q_i, \\
{\bf x}(v) &  \textup{if } w = p_i, \\
{\bf x}(w) & \textup{otherwise},
\end{cases}
\label{eq:feigenvector}
\end{equation}
for $w \in V(F(\Gamma, v))$,
is a kernel eigenvector for $F(\Gamma, v)$.
\end{lemma}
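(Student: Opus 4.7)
The plan is to verify directly that the proposed vector $\mathbf{x}'$ satisfies the local condition \eqref{eq:locCon} (adapted for signed graphs via the weights $\sigma'$) at every vertex of $F(\Gamma, v)$. I would partition $V(F(\Gamma, v))$ into five types of vertex and handle each in turn: (i) the vertex $v$ itself, (ii) each new vertex $q_i$, (iii) each new vertex $p_i$, (iv) the old neighbours $u_i$ of $v$, and (v) all remaining vertices $w \in V(G) \setminus (\{v\} \cup N(v))$. The only facts I will use about $\mathbf{x}$ are the two local conditions in $\Gamma$ that get redistributed in the construction: the condition at $v$, namely $\sum_{i=1}^\rho \sigma(vu_i)\mathbf{x}(u_i) = 0$, and the condition at each $u_i$, namely $\sigma(vu_i)\mathbf{x}(v) + \sum_{w \sim u_i,\; w \neq v} \sigma(u_i w)\mathbf{x}(w) = 0$.

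For type (i), note that the neighbours of $v$ in $F(\Gamma, v)$ are exactly the $q_i$, all joined by positive edges, so the weighted sum at $v$ reduces to $\sum_i \sigma(vu_i)\mathbf{x}(u_i) = 0$, which is the local condition at $v$ in $\Gamma$. For (ii), each $q_i$ sees $v$ and all $p_j$ with $j \neq i$, all through positive edges, so the sum is $-(\rho-1)\mathbf{x}(v) + (\rho-1)\mathbf{x}(v) = 0$ by the very choice of the entry at $v$. For (iii), each $p_i$ is adjacent to all $q_j$ with $j \neq i$ (positive) and to $u_i$ via an edge of sign $\sigma(vu_i)$, which gives
\[
\sum_{j \neq i} \sigma(vu_j)\mathbf{x}(u_j) + \sigma(vu_i)\mathbf{x}(u_i) = \sum_{j=1}^{\rho} \sigma(vu_j)\mathbf{x}(u_j) = 0,
\]
again by the local condition at $v$ in $\Gamma$. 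This telescoping is the one observation that makes the construction work, and it is the only place the definition of $\mathbf{x}'(q_i) = \sigma(vu_i)\mathbf{x}(u_i)$ is essentially needed.

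For type (iv), in $F(\Gamma, v)$ the vertex $u_i$ loses its neighbour $v$ but gains neighbour $p_i$ through an edge whose sign is $\sigma(vu_i)$, and by construction $\mathbf{x}'(p_i) = \mathbf{x}(v)$, so the contribution $\sigma'(p_i u_i)\mathbf{x}'(p_i) = \sigma(vu_i)\mathbf{x}(v)$ is identical to the contribution $\sigma(vu_i)\mathbf{x}(v)$ that $v$ made at $u_i$ inside $\Gamma$; all other neighbours of $u_i$ and their signs are unchanged, so the local condition at $u_i$ in $F(\Gamma, v)$ coincides with the local condition at $u_i$ in $\Gamma$, which holds. Type (v) is immediate: the neighbourhood and the restriction of $\mathbf{x}'$ agree with those of $\mathbf{x}$, and all signs are inherited unchanged, so the condition descends verbatim from $\Gamma$.

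There is no genuine obstacle; the work is a bookkeeping exercise. The only mildly delicate point is step (iii), where one has to notice that the ``missing'' term $\sigma(vu_i)\mathbf{x}(u_i)$ in the sum over $j \neq i$ is exactly supplied by the edge $p_i u_i$ with its inherited sign, turning a partial sum into the full sum that vanishes by the local condition at $v$. Once each of the five cases is checked, every local condition in $F(\Gamma, v)$ is satisfied, so $\mathbf{x}'$ lies in $\ker A(F(\Gamma, v))$, proving the lemma.
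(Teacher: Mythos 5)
Your proof is correct and takes essentially the approach the paper intends: the paper gives no written proof of this lemma, instead leaving the verification of the local condition at each vertex type to the labelling displayed in Figure~\ref{Fig-FowlerExt}, and your five-case check (at $v$, $q_i$, $p_i$, $u_i$, and the untouched vertices) is exactly that verification made explicit. All steps, including the key cancellation at $p_i$ where the inherited sign $\sigma(vu_i)$ on the edge $p_iu_i$ completes the sum $\sum_j \sigma(vu_j)\mathbf{x}(u_j)=0$, are sound.
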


The local structures in the signed graphs $\Gamma$ and $F(\Gamma, v)$ are shown in Figure~\ref{Fig-FowlerExt}, which also indicates the
relationships between kernel eigenvectors in these graphs.

\begin{figure}[!htb]
\subcaptionbox{$\Gamma$\label{Fig-FowlerExta}}[.5\linewidth]{
\centering
\begin{tikzpicture}
\definecolor{mygreen}{RGB}{205, 238, 231}
\tikzstyle{vertex}=[draw,circle,font=\scriptsize,minimum size=13pt,inner sep=1pt,fill=mygreen]
\tikzstyle{edge}=[draw,thick]
\coordinate (u1) at (-2, -1);
\coordinate (u2) at (-0.7, -1);
\coordinate (ud) at (2, -1);
\path[edge,fill=yellow!20!white] (u1) .. controls ($ (u1) + (-60:0.5) $) and ($ (u2) + (-120:0.5) $) .. (u2) .. controls ($ (u2) + (-60:0.7) $) and ($ (ud) + (-120:0.7) $)
.. (ud) .. controls ($ (ud) + (-60:3) $) and ($ (u1) + (-120:3) $) .. (u1);
\node[vertex,fill=mygreen,label={[yshift=0pt,xshift=0pt]90:$a$}] (v) at (0, 0) {$v$};
\node[vertex,fill=mygreen,label={[xshift=2pt,yshift=0pt]180:$b_1$}] (ux1) at (u1) {$u_1$};
\node[vertex,fill=mygreen,label={[xshift=2pt,yshift=0pt]180:$b_2$}] (ux2) at (u2) {$u_2$};
\node[vertex,fill=mygreen,label={[xshift=-2pt,yshift=0pt]0:$b_\rho$}] (uxd) at (ud) {$u_\rho$};
\node at (0.5, -1) {$\ldots$};
\path[edge,color=red] (v) -- (ux1) node[midway,xshift=-23pt,yshift=0,color=black] {\footnotesize $\sigma(vu_1)$};
\path[edge,color=red] (v) -- (ux2) node[midway,xshift=16pt,yshift=0,color=black] {\footnotesize $\sigma(vu_2)$};
\path[edge] (v) -- (uxd) node[midway,xshift=20pt,yshift=0,color=black] {\footnotesize $\sigma(vu_\rho)$};
\end{tikzpicture}
}%
\subcaptionbox{$F(\Gamma, v)$\label{Fig-FowlerExtb}}[.5\linewidth]{
\centering
\begin{tikzpicture}
\definecolor{mygreen}{RGB}{205, 238, 231}
\tikzstyle{vertex}=[draw,circle,font=\scriptsize,minimum size=13pt,inner sep=1pt,fill=mygreen]
\tikzstyle{edge}=[draw,thick]
\coordinate (u1) at (-2, -4);
\coordinate (u2) at (-0.7, -4);
\coordinate (ud) at (2, -4);
\path[edge,fill=yellow!20!white] (u1) .. controls ($ (u1) + (-60:0.5) $) and ($ (u2) + (-120:0.5) $) .. (u2) .. controls ($ (u2) + (-60:0.7) $) and ($ (ud) + (-120:0.7) $)
.. (ud) .. controls ($ (ud) + (-60:3) $) and ($ (u1) + (-120:3) $) .. (u1);
\node[vertex,fill=mygreen,label={[yshift=-4pt,xshift=0pt]90:$(1-\rho)a$}] (v) at (0, 0) {$v$};
\node[vertex,fill=mygreen,label={[xshift=2pt,yshift=0pt]180:{\footnotesize $\sigma(vu_1)b_1$}}] (q1) at (-2, -1) {$q_1$};
\node[vertex,fill=mygreen,label={[xshift=-2pt,yshift=0pt]0:{\footnotesize $\sigma(vu_2)b_2$}}] (q2) at (-0.7, -1) {$q_2$};
\node[vertex,fill=mygreen,label={[xshift=-2pt,yshift=0pt]0:{\footnotesize $\sigma(vu_\rho)b_\rho$}}] (qd) at (2, -1) {$q_\rho$};
\node at (1.25, -1) {$\ldots$};
\node[vertex,fill=mygreen,label={[xshift=2pt,yshift=0pt]180:$a$}] (p1) at (-2, -2.5) {$p_1$};
\node[vertex,fill=mygreen,label={[xshift=2pt,yshift=0pt]180:$a$}] (p2) at (-0.7, -2.5) {$p_2$};
\node[vertex,fill=mygreen,label={[xshift=0pt,yshift=0pt]0:$a$}] (pd) at (2, -2.5) {$p_\rho$};
\node at (0.5, -2.5) {$\ldots$};
\node[vertex,fill=mygreen,label={[xshift=2pt,yshift=0pt]180:$b_1$}] (ux1) at (-2, -4) {$u_1$};
\node[vertex,fill=mygreen,label={[xshift=2pt,yshift=0pt]180:$b_2$}] (ux2) at (-0.7, -4) {$u_2$};
\node[vertex,fill=mygreen,label={[xshift=-2pt,yshift=0pt]0:$b_\rho$}] (uxd) at (2, -4) {$u_\rho$};
\node at (0.5, -4) {$\ldots$};
\path[edge] (v) -- (q1);
\path[edge] (v) -- (q2);
\path[edge] (v) -- (qd);
\path[edge] (p1) -- (q2); \path[edge] (p1) -- (qd);
\path[edge] (p2) -- (q1); \path[edge] (p2) -- (qd);
\path[edge] (pd) -- (q1); \path[edge] (pd) -- (q2);
\path[edge,color=red] (p1) -- (ux1) node[midway,xshift=-15pt,yshift=0,color=black] {\footnotesize $\sigma(vu_1)$}; 
\path[edge,color=red] (p2) -- (ux2) node[midway,xshift=16pt,yshift=0,color=black] {\footnotesize $\sigma(vu_2)$}; 
\path[edge] (pd) -- (uxd) node[midway,xshift=16pt,yshift=0,color=black] {\footnotesize $\sigma(vu_\rho)$};
\end{tikzpicture}
}%
\caption{A construction for expansion of a signed nut graph $\Gamma$ about vertex $v$ of degree $\rho$, to give $F(\Gamma, v)$.
The labelling of vertices in $\Gamma$ and $F(\Gamma, v)$ is shown within the circles that represent vertices.
Shown beside each vertex is the corresponding entry of the (unique) kernel eigenvector with integer entries for the respective graph.
Panel (a) shows the neighbourhood of vertex $v$ in $\Gamma$. 
Edges from vertex $v$ to its neighbours have weights
$\sigma(vu_i)$ which are either $+1$ or $-1$. In the figure, edges with weight $-1$ are indicated in red, as an illustration.
Edges of the remainder of the graph, indicated by the shaded bubble, may take arbitrary signs.
Panel (b) shows additional vertices and edges in $F(\Gamma, v)$.
Vertices $q_i$ inherit their entries from $\Gamma$ as described in Equation \eqref{eq:feigenvector}.
Edges $p_iu_i$ inherit their weights (signs) from $\Gamma$.
All other new edges in Panel (b) have weights $+1$.} 
\label{Fig-FowlerExt}
\end{figure}
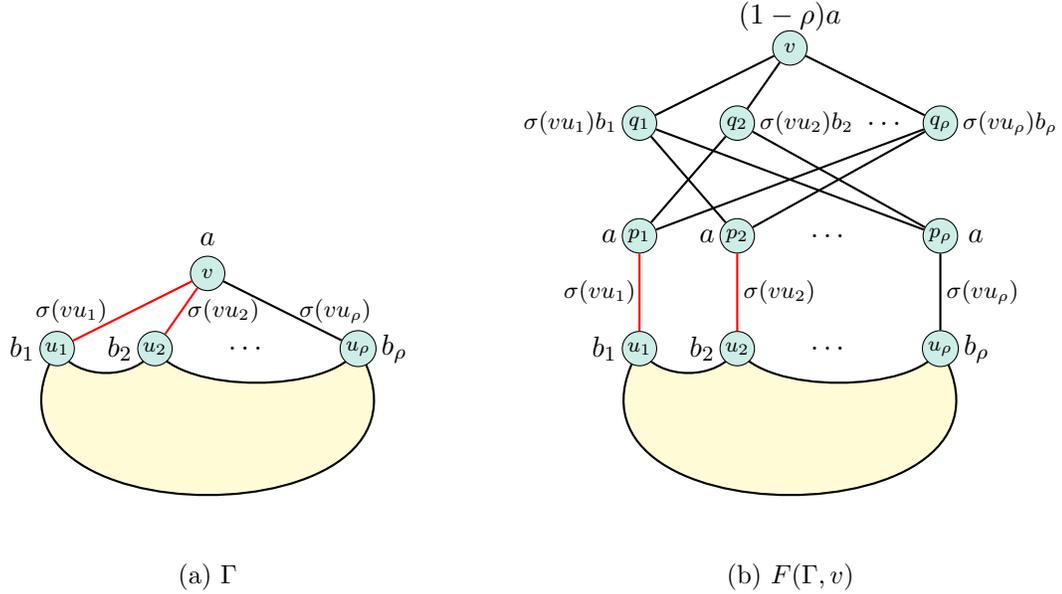

\begin{lemma}
\label{EquivLabeling}
Let $\Gamma$  be a singular signed graph, and let $\x$ be a kernel eigenvector. Let $u,v \in V$ be any two non-adjacent vertices, having the same degree,
say $\rho$, and sharing $\rho-1$ neighbours. Let $u'$ denote the neighbour of $u$ that is not a neighbour of $v$, and let $v'$ denote the neighbour of $v$
that is not a neighbour of $u$.  
If $\sigma(uw) = \sigma(wv)$ for all $w \in N(u) \setminus \{u'\}$, then
$|\x(u')| = |\x(v')|$. Moreover, $\x(u') = \x(v')$ if and only if $\sigma(vv') = \sigma(uu')$.
\end{lemma}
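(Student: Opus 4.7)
The plan is to apply the local condition (equation \eqref{eq:locCon}, adapted to signed graphs) at both pivot vertices $u$ and $v$ and compare the two resulting equations. Because $u$ and $v$ share $\rho-1$ of their $\rho$ neighbours, and the signs on the corresponding incident edges agree by hypothesis, nearly all terms on the two sides cancel and we are left with a direct relation between $\x(u')$ and $\x(v')$.

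More concretely, write $S = N(u) \setminus \{u'\} = N(v) \setminus \{v'\}$, the set of the $\rho - 1$ common neighbours. The local condition at $u$ reads
\begin{equation*}
\sigma(uu')\,\x(u') + \sum_{w \in S} \sigma(uw)\,\x(w) = 0,
\end{equation*}
and at $v$,
\begin{equation*}
\sigma(vv')\,\x(v') + \sum_{w \in S} \sigma(vw)\,\x(w) = 0.
\end{equation*}
The hypothesis $\sigma(uw) = \sigma(wv)$ for every $w \in S$ forces the two $S$-sums to be identical, so subtracting gives
\begin{equation*}
\sigma(uu')\,\x(u') \;=\; \sigma(vv')\,\x(v').
\end{equation*}
Since $\sigma(uu'), \sigma(vv') \in \{-1,+1\}$, taking absolute values yields $|\x(u')| = |\x(v')|$, and the refined statement $\x(u') = \x(v')$ iff $\sigma(uu') = \sigma(vv')$ follows by inspecting the sign factor $\sigma(uu')\sigma(vv') \in \{-1,+1\}$.

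There is really no hard step here: the only thing to be slightly careful about is making sure that the local condition is applied in its signed form (i.e.\ each term weighted by $\sigma$ on the incident edge), and that the symmetric hypothesis $\sigma(uw) = \sigma(wv)$ is used to align the two $S$-sums before cancellation. Once those bookkeeping points are pinned down, both conclusions of the lemma drop out in a single line.
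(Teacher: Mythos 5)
Your proof is correct and follows exactly the paper's argument: write the signed local conditions at $u$ and $v$, use the hypothesis $\sigma(uw)=\sigma(wv)$ on the shared neighbours to cancel the common sums, and deduce $\sigma(uu')\,\x(u')=\sigma(vv')\,\x(v')$, from which both conclusions follow. No differences worth noting.
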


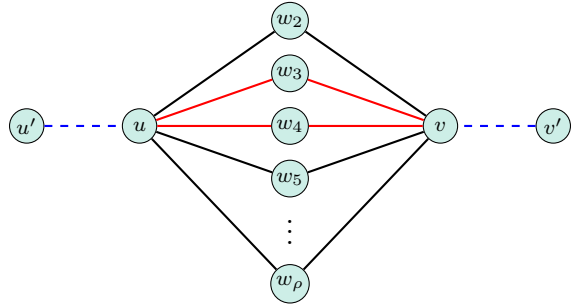
\begin{figure}[!htbp]
\centering
\begin{tikzpicture}
\definecolor{mygreen}{RGB}{205, 238, 231}
\tikzstyle{vertex}=[draw,circle,font=\scriptsize,minimum size=13pt,inner sep=1pt,fill=mygreen]
\tikzstyle{edge}=[draw,thick]
\node[vertex] (w1) at (0, 0) {$w_2$};
\node[vertex] (w2) at (0, -0.7) {$w_3$};
\node[vertex] (w3) at (0, -1.4) {$w_4$};
\node[vertex] (w4) at (0, -2.1) {$w_5$};
\node[vertex] (wk) at (0, -3.5) {$w_{\rho}$};
\node[vertex] (u) at (-2, -1.4) {$u$};
\node[vertex] (up) at (-3.5, -1.4) {$u'$};
\node[vertex] (v) at (2, -1.4) {$v$};
\node[vertex] (vp) at (3.5, -1.4) {$v'$};
\path[edge,color=blue,dashed] (up) -- (u);
\path[edge,color=blue,dashed] (vp) -- (v);
\path[edge] (u) -- (w1) -- (v);
\path[edge,color=red] (u) -- (w2) -- (v);
\path[edge,color=red] (u) -- (w3) -- (v);
\path[edge] (u) -- (w4) -- (v);
\path[edge] (u) -- (wk) -- (v);
\node at (0, -2.7) {$\vdots$};
\end{tikzpicture}
\caption{The neighbourhood of vertices $u$ and $v$ in Lemma~\ref{EquivLabeling}. The red
edges indicate a possible selection of edges with weight $-1$.}
\label{fig:locLemma}
\end{figure}

\begin{proof}
Let $N(u) \setminus \{u'\} = N(v) \setminus \{v'\} = \{w_2, \ldots, w_\rho \}$ (see Figure~\ref{fig:locLemma}). The respective local conditions at vertices $u$ and $v$ are
\begin{align}
\sigma(uu') \mathbf{x}(u') + \sum_{i=2}^{\rho} \sigma(uw_i) \mathbf{x}(w_i)& = 0, \label{eq:loc1} \\
\sigma(vv') \mathbf{x}(v') + \sum_{i=2}^{\rho} \sigma(w_iv)\mathbf{x}(w_i) & = 0. \label{eq:loc2}
\end{align}
Since $\sigma(uw_i) = \sigma(w_iv)$ for all $2 \leq i \leq \rho$, we get that
\begin{equation}
 \sigma(uu')  \mathbf{x}(u')= \sigma(vv') \mathbf{x}(v') ,
\label{eq:xeseql}
\end{equation}
by taking the difference of Equations \eqref{eq:loc1} and \eqref{eq:loc2}. Clearly,
\begin{equation}
|\mathbf{x}(u')| = |\sigma(uu') \mathbf{x}(u') | = |\sigma(vv') \mathbf{x}(v')| = |\mathbf{x}(v')|.
\end{equation}
If $\sigma(uu') = \sigma(vv')$, then Equation~\eqref{eq:xeseql} implies $\mathbf{x}(u') = \mathbf{x}(v')$.
Similarly, if $\mathbf{x}(u') = \mathbf{x}(v')$, then Equation~\eqref{eq:xeseql} implies $\sigma(uu') = \sigma(vv')$.
\end{proof}

\begin{lemma}
\label{lem:lemmaxxx}
Let $\Gamma$ and $\Gamma'$ be signed graphs over the same 
underlying
graph $G$, i.e.\ 
$\Gamma = (G, \sigma)$ and $\Gamma' = (G, \sigma')$.
Let $v$ be a vertex of $G$. Then $\Gamma$ is switching equivalent to $\Gamma'$
if and only if $F(\Gamma, v)$ is switching equivalent to $F(\Gamma',v)$.
\end{lemma}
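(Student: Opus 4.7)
The plan is to prove the two implications of the equivalence separately by translating switching sets between $V(G)$ and $V(F(G,v))$, exploiting the fact that every edge among the ``gadget'' vertices $v, q_1, \ldots, q_\rho, p_1, \ldots, p_\rho$ (that is, every edge of the form $vq_i$ or $q_ip_j$) carries sign $+1$ in both $F(\Gamma,v)$ and $F(\Gamma',v)$, while each bridge edge $p_iu_i$ carries the sign $\sigma(vu_i)$ in $F(\Gamma,v)$ and $\sigma'(vu_i)$ in $F(\Gamma',v)$.

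For the forward direction, suppose $\Gamma^U = \Gamma'$ for some $U\subseteq V(G)$. Since switching at $U$ agrees with switching at $V(G)\setminus U$, I may assume $v\notin U$. I would then set $U' := U$, viewed as a subset of $V(F(G,v))$, and check edge by edge that $F(\Gamma,v)^{U'} = F(\Gamma',v)$. Original edges of $G$ not incident to $v$ behave identically in both settings; the gadget edges stay $+1$ because none of $v, q_i, p_j$ is in $U'$; and each bridge edge $p_iu_i$ receives sign $\sigma(vu_i)\cdot(-1)^{[u_i\in U]}$, which by the definition of $\sigma'$ (coupled with $v\notin U$) is exactly $\sigma'(vu_i)$.

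For the backward direction, suppose $F(\Gamma,v)^{U'} = F(\Gamma',v)$ for some $U'\subseteq V(F(G,v))$. The key structural observation is that the subgraph induced by $\{v, q_1, \ldots, q_\rho, p_1, \ldots, p_\rho\}$ is connected (for $\rho\geq 2$) and all its edges are positively signed in both graphs, so switching must preserve every such sign. This forces $U'$ either to contain all of $v, q_i, p_j$ or none of them. After possibly replacing $U'$ by its complement in $V(F(G,v))$, I may assume none of them lie in $U'$. Then $U' \subseteq V(G)\setminus\{v\}$, and setting $U := U'$ I would reverse the edge-by-edge check to conclude $\Gamma^U = \Gamma'$, the only nontrivial piece again being that the sign of each $p_iu_i$ (flipped only at $u_i$) matches $\sigma^U(vu_i)=\sigma(vu_i)\cdot(-1)^{[u_i\in U]}$.

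The main obstacle is the backward direction, specifically justifying the WLOG step that allows the switching set to miss the entire gadget. This is where the connectivity of the all-positive gadget subgraph is essential: without it, one could not conclude that the parities $[v\in U'],\, [q_i\in U'],\, [p_j\in U']$ must all coincide, and the two-to-one correspondence between switchings of $F(G,v)$ and switchings of $G$ would break. Once that reduction is in place, the rest is a routine parity check, since each bridge edge $p_iu_i$ records precisely the same net sign flip as the edge $vu_i$ records in $\Gamma$ whenever $v\notin U$.
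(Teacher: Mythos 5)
Your proposal is correct and takes essentially the same route as the paper: the forward direction amounts to the identity $F(\Gamma^U,v)=F(\Gamma,v)^U$ for $v\notin U$, and the backward direction uses the fact that the all-positive gadget on $\{v,q_1,\ldots,q_\rho,p_1,\ldots,p_\rho\}$ forces the switching set (after complementation) to avoid the gadget entirely. Your explicit appeal to the connectivity of the gadget is in fact a more careful justification of the step the paper dismisses with ``it is clear that $S\subset V(G)$.''
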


\begin{proof}
Let $\Gamma$ and $\Gamma'$ be two signed graphs over graph $G$, say $\Gamma = (G,\Sigma)$ and $\Gamma' = (G,\Sigma')$.
Let $v \in V(G)$ and let $F(\Gamma,v)$ and $F(\Gamma',v)$ be  the corresponding Fowler constructions.  Let $\Gamma$ and $\Gamma'$ be
switching equivalent. This means that there exists $S \subset V(G)$, such that $\Gamma' = \Gamma^S$. We know that 
$\Gamma^S = \Gamma^{V(G) \setminus S}$. Without loss of generality we may assume that $v \notin S$. Let the vertex labelling of $F(\Gamma,v)$,
$F(\Gamma',v)$ and $F(G,v)$ be the same as in Figure~\ref{Fig-FowlerExt}. In particular, this means that all vertices of $G$ belong also to $F(G,v)$. Since 
$v \notin S$ we have:
\begin{equation}
F(\Gamma',v) = F(\Gamma^S,v) = F(\Gamma,v)^S.
\end{equation}
Hence it follows:
\begin{equation}
\Gamma \sim \Gamma' \Rightarrow F(\Gamma,v) \sim F(\Gamma',v).
\end{equation}
To prove the converse assume the following:
$F(\Gamma,v) \sim F(\Gamma',v)$, where
$\Gamma = (G, \Sigma)$ and $\Gamma' = (G,\Sigma')$.
Let $S \subset V(F(G,v))$ such that $v \notin S$. Since all edges above $u_1,u_2, \ldots, u_s$ in Figure \ref{Fig-FowlerExt}(\subref{Fig-FowlerExtb}) are positive in both signed graphs, it is
clear that $S \subset V(G)$ and the result follows.
\end{proof}

\begin{theorem}
\label{thm:dathm}
Let $\Gamma$ be a signed graph and $v$ any one of its vertices.
Then the nullities of $\Gamma$ and $F(\Gamma, v)$ are equal, i.e.\ $\eta(\Gamma) = \eta(F(\Gamma, v))$.
Moreover, $\ker \Gamma$ admits a full eigenvector if and only if $\ker F(\Gamma, v)$ admits a full eigenvector.
\end{theorem}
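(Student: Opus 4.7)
The plan is to show that the map $\x \mapsto \x'$ supplied by the preceding lemma is an invertible linear map between $\ker A(\Gamma)$ and $\ker A(F(\Gamma, v))$, thereby yielding $\eta(\Gamma) = \eta(F(\Gamma, v))$, and then to read off the ``full eigenvector'' claim from both directions of the bijection. The forward direction is essentially free: formula \eqref{eq:feigenvector} depends linearly on $\x$, and since $\x'$ agrees with $\x$ on $V(G) \setminus \{v\}$ and also encodes $\x(v)$ through $\x'(p_i) = \x(v)$, the map is injective. Thus $\eta(\Gamma) \leq \eta(F(\Gamma, v))$ is immediate.

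For the reverse inequality I would fix any $\y \in \ker A(F(\Gamma, v))$ and analyse its values on the newly added vertices of the Fowler gadget. The crucial observation is that for any distinct $i, j$, the vertices $q_i$ and $q_j$ are non-adjacent, have common degree $\rho$, share exactly the $\rho - 1$ neighbours $\{v\} \cup \{p_k : k \notin \{i, j\}\}$, and by construction every edge incident with $q_i$ or $q_j$ carries weight $+1$. Lemma \ref{EquivLabeling} therefore forces $\y(p_i) = \y(p_j)$; let $c$ denote this common value. The local condition at $q_i$ collapses to $\y(v) + (\rho - 1) c = 0$, while combining the local condition at $v$ (which gives $\sum_i \y(q_i) = 0$) with the local condition at $p_i$ yields $\y(q_i) = \sigma(vu_i)\, \y(u_i)$. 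The cases $\rho \in \{0, 1\}$ can be handled trivially and separately. I then define $\x(v) := c$ and $\x(w) := \y(w)$ for $w \in V(G) \setminus \{v\}$; a routine check of the local conditions of $\Gamma$ at $v$, at each $u_i$, and at every other vertex of $G$ confirms that $\x \in \ker A(\Gamma)$.

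These two assignments are manifestly linear, and the explicit formulas show that they are mutual inverses (the round-trip $\x \mapsto \x' \mapsto \x$ recovers $\x(v)$ from $\x'(p_i)$, and conversely $\y \mapsto \x \mapsto \x' = \y$ by the derivation above), so $\eta(\Gamma) = \eta(F(\Gamma, v))$. The ``full kernel eigenvector'' equivalence then drops out of the formulas: if $\x$ is full then every entry of $\x'$---namely $-(\rho{-}1)\x(v)$, $\sigma(vu_i)\x(u_i)$, $\x(v)$, and $\x(w)$ for $w$ outside $\{v, u_1, \dots, u_\rho\}$---is nonzero; conversely if $\y$ is full then $c = \y(p_1) \neq 0$ and $\y(w) \neq 0$ for $w \neq v$, so the reconstructed $\x$ is full. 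The main obstacle is recognising that the $q_i$'s furnish the correct family of symmetric vertex pairs on which to invoke Lemma \ref{EquivLabeling}: one must verify the degree, common-neighbourhood, and matching-sign hypotheses simultaneously inside the Fowler gadget. Once that symmetry is exploited, the remainder is a careful but mechanical bookkeeping exercise on the local conditions.
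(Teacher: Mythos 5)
Your proposal is correct and follows essentially the same route as the paper's primary proof: the forward inequality via the explicit kernel-eigenvector formula \eqref{eq:feigenvector}, and the reverse inequality by applying Lemma~\ref{EquivLabeling} to the pairs $q_i, q_j$ to force $\y(p_i)=\y(p_j)$ and then recovering a kernel vector of $\Gamma$ from the local conditions. The only cosmetic differences are that you derive $\y(q_i)=\sigma(vu_i)\y(u_i)$ directly from the local conditions at $v$ and $p_i$ where the paper invokes Lemma~\ref{EquivLabeling} a second time, and that you make the linear-bijection bookkeeping (and the implicit assumption $\rho\ge 2$) explicit.
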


\begin{proof}
Let $u_1,\ldots, u_{\rho}$ be the neighbours of vertex $v$ in $G$. Assume first that $G$ is a core graph and that $\x$ is an admissible eigenvector. Let $\x(w)$ denote the entry of $\x$ at vertex $w$. Let $a = \x(v)$ and let $b_i = \x(u_i)$.  We now produce a vertex labelling $\x'$ of $F(\Gamma,v)$ as above. It follows that if $\x$ is a valid assignment of $G$ then $\x'$ is a valid assignment on $F(\Gamma,v)$.
Thus $\eta( F(\Gamma, v)) \geq \eta(\Gamma)$.

On the other hand, apply Lemma~\ref{EquivLabeling} to $F(\Gamma,v)$ and an admissible assignment $\x'$.  First consider vertices $q_i$ and $q_j$ and their neighbourhoods.  Lemma~\ref{EquivLabeling} implies that $\x'(p_i) = \x'(p_j)$. Hence $\x'$ is constant on $p_i$, say $\x'(p_i) = a$.  Thus, it follows that $\x'(v) = -(\rho-1)a$. The second application of the lemma goes to vertices $v$ and $p_i$. It implies that for each $i$ the values $\x'(q_i)$ and $\x'(u_i)$ are equal, namely $\x'(q_i) = \x'(u_i)$.  Finally, let $\x(w) = \x'(w)$ for every $w \in V(G) \setminus \{v\}$ and let $\x(v) = a$. Hence, the existence of an admissible $\x'$ on $F(\Gamma,v)$ implies the existence of an admissible $\x$ on $\Gamma$.  Thus $\eta( F(\Gamma, v)) \leq \eta(\Gamma)$.
\end{proof}

\begin{corollary}
Let $\Gamma = (G, \sigma)$ be a signed graph and $v \in V(G)$ any one of its vertices.
The following statements hold:
\begin{enumerate}[label=(\arabic*)]
\item $F(\Gamma, v)$ is a signed nut graph if and only if $\Gamma$ is a signed nut graph.
\item $F(\Gamma, v)$ is a sign-balanced nut graph if and only if $\Gamma$ is a sign-balanced nut graph.
\end{enumerate}
\end{corollary}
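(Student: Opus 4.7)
The plan is to derive both parts of the corollary mechanically from the two preceding results: Theorem~\ref{thm:dathm} (nullity and fullness are preserved by the Fowler construction) and Lemma~\ref{lem:lemmaxxx} (switching equivalence is preserved by the Fowler construction).

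For part (1), I would simply unpack the definition. By definition, $\Gamma$ is a signed nut graph if and only if $\eta(\Gamma) = 1$ and $\ker A(\Gamma)$ contains a full eigenvector. Theorem~\ref{thm:dathm} asserts both of these conditions are carried back and forth between $\Gamma$ and $F(\Gamma, v)$: the nullities are equal, and one kernel admits a full eigenvector if and only if the other does. Applying both halves simultaneously yields (1).

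For part (2), the idea is to reduce to (1) plus the observation that the Fowler construction sends the all-positive signed graph to the all-positive signed graph. More precisely, $\Gamma = (G, \sigma)$ is proper if and only if it is not switching equivalent to $(G, \emptyset)$, and similarly $F(\Gamma, v)$ is proper if and only if it is not switching equivalent to $(F(G, v), \emptyset)$. Inspecting the defining formula of $\sigma'$ in the signed Fowler construction, every clause evaluates to $+1$ whenever $\sigma \equiv +1$; hence $F((G, \emptyset), v) = (F(G, v), \emptyset)$. Applying Lemma~\ref{lem:lemmaxxx} with $\Gamma' = (G, \emptyset)$ therefore gives
\[
\Gamma \sim (G, \emptyset) \iff F(\Gamma, v) \sim F((G, \emptyset), v) = (F(G, v), \emptyset),
\]
i.e.\ $\Gamma$ is traditional iff $F(\Gamma, v)$ is traditional. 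Combining with part (1) yields (2).

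There is no real obstacle here: the substantive work has already been done in Theorem~\ref{thm:dathm} and Lemma~\ref{lem:lemmaxxx}. The only point needing explicit verification is the mild observation that $F$ sends the trivial sign function to the trivial sign function, which is immediate from the definition of the signed Fowler construction.
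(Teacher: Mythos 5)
Your proposal is correct and follows essentially the same route as the paper: part (1) from Theorem~\ref{thm:dathm}, and part (2) from Lemma~\ref{lem:lemmaxxx} together with the observation that $F((G,\emptyset),v) = (F(G,v),\emptyset)$. In fact your phrasing is slightly more careful than the paper's, which states the switching-equivalence condition for ``proper'' where it means ``traditional''.
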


\begin{proof}
Follows directly from Lemma~\ref{lem:lemmaxxx} and Theorem~\ref{thm:dathm}. Namely, if $\Gamma$ is sign-balanced, then it is switching equivalent to the all-positive signed nut graph 
$\Gamma' = (G,\emptyset)$. However, in this case $F(\Gamma',v) = F((G,\emptyset),v) = (F(G,v),\emptyset)$. Virtually the same argument
can be used in the opposite direction.
\end{proof}

The question answered by Table~\ref{tab:pinky} and the associated theorems was about finding graph orders where \emph{only} sign-unbalanced nut graphs
would exist. The above construction allows us to say something about the cases where both sign-balanced and sign-unbalanced nut graphs will exist.
A useful observation is the following.
\begin{observation}
\label{obs:theobs}
Whenever there is a parameter pair $(n, \rho)$ marked with a $\rC$ in Table~\ref{tab:pinky}, there is an infinite
series of values $n, n + 2\rho, n + 4\rho, n + 6\rho, \ldots$ at which a sign-unbalanced nut graph exists.
\end{observation}

\begin{theorem}
\label{thm:new2}
Let $U(\rho)$ be the set consisting of all integers $n$ for which a $\rho$-regular sign-unbalanced nut graph of order $n$ exists.
The following holds:
\begin{enumerate}
\item $U(1) = \emptyset$
\item $U(2) = \emptyset$
\item $U(3) = \{2k \mid k\geq 6\}$
\item $U(4) = \{5\} \cup \{k \mid k\geq 7\}$
\item $U(5) = \{2k \mid k\geq 5\}$ 
\item $U(6) = \{k \mid k\geq 8\}$
\item $U(7) = \{2k \mid k\geq 5\}$
\item $U(8) = \{ k \mid  k \geq 9 \}$ 
\item $U(9) = \{ 2k \mid  k \geq 6 \}$
\item $U(10) = \{ k \mid   k \geq 12 \}$
\item $U(11) = \{ 2k \mid  k \geq 7 \}$
\end{enumerate}
\end{theorem}

\begin{proof}
Cases $U(1) = \emptyset$ and $U(2) = \emptyset$ follow from the proof of Theorem \ref{thm:main}.
For the remaining cases Table~{\ref{tab:pinky}} suggests a conjecture for each row, which is proved by identifying at least one sign-unbalanced nut graph for key small values of $n$, and applying
the construction.

\noindent Case $U(3)$: A check for $n = 12$ yielded an example of a sign-unbalanced nut graph. Hence by use of Theorem~\ref{thm:main}
and Observation~\ref{obs:theobs}, sign-unbalanced nut graphs exist for all even $n \geq 12$.

\noindent Case $U(4)$: Checking that a sign-unbalanced nut graph exists for orders $8, 10, 12$ and $14$ gives 
a run of cases from which the result follows by use of the construction.

\noindent Case $U(5)$: From Table~\ref{tab:pinky}, $5$-regular sign-balanced nut graphs exist for all even $n \geq 10$. Checking the
cases $n = 10, 12, \ldots, 18$ shows that $5$-regular sign-unbalanced nut graphs also exist, and the result follows by use of
the construction.

\noindent Case $U(6)$: Checking cases $n = 12, 13, \ldots, 19$ is sufficient.

\noindent Case $U(7)$: Checking cases $n = 12, 14, \ldots, 22$ is sufficient.

\noindent Case $U(8)$: Checking cases $n = 12$ and $n = 14, 15, \ldots, 24$ is sufficient.

\noindent Case $U(9)$: Checking cases $n = 16, 18, \ldots, 28$ is sufficient.

\noindent Case $U(10)$: Checking cases $n = 15, 16, \ldots, 31$ is sufficient.

\noindent Case $U(11)$: Checking cases $n = 16, 18, \ldots, 34$ is sufficient.
\end{proof}

Note that $N_s(\rho) = U(\rho) \setminus N(\rho)$. Clearly, $N(\rho) \subseteq U(\rho)$ for all $1 \leq \rho \leq 11$.

\section{Conclusion}

Invocation of signed graphs as candidates 
for nut graphs allows extension of the orders at which a nut graph exists, and leads to a proof of all cases for regular nut graphs (either sign-balanced or sign-unbalanced) with degree at most $11$.
As with unweighted nut graphs, signed nut graphs can be generated by a generic construction in which the order of a 
smaller signed nut graph increases from $n$ to $n + 2\rho$, where $\rho$ is the degree of the vertex chosen as the focus of this vertex-expansion construction.
This was used to establish Theorem~\ref{thm:new2}, which specifies infinite series of orders $n$ for which both sign-balanced and sign-unbalanced nut graphs exists.
Two natural questions arise:

\begin{question}
Does there exist a degree $\rho$ and order $n$ such that there is at least one $\rho$-regular sign-balanced nut graph of order $n$, but
no sign-unbalanced nut graph with the same parameters? 
\end{question}

\noindent
Current data collected in Table~{\ref{tab:pinky}} are compatible with the existence of three patterns for $U(\rho)$ based on divisibility by $4$. 
These give
the basis for the following question for $\rho$ large enough:

\begin{question}
For all $\rho \geq 6$, prove or find a counterexample to
\begin{equation}
U(\rho) = \begin{cases}
\{ k \mid k \in \mathbb{Z}, k \geq \rho + 1 \} & \text{if } \rho \equiv 0 \pmod{4}, \\
\{ k \mid k \in \mathbb{Z}, k \geq \rho + 2 \} & \text{if } \rho \equiv 2 \pmod{4}, \\
\{ 2k \mid k \in \mathbb{Z}, 2k \geq \rho + 3 \} & \text{otherwise}.
\end{cases}
\label{quest:2}
\end{equation}
\end{question}

\noindent
Clearly, Equation \eqref{quest:2} would not hold for $\rho < 6$ (see Theorem 18).

\section*{Acknowledgements}
We thank Thomas Zaslavsky for helpful comments on an earlier draft of this paper.
The work of Toma\v{z} Pisanski is supported in part by the Slovenian Research Agency (research program P1-0294 
and research projects N1-0032, J1-9187, J1-1690, N1-0140 and J1-2481), and in part by H2020 Teaming InnoRenew CoE.
The work of Nino Bašić is supported in part by the Slovenian Research Agency (research program P1-0294
and research projects J1-9187, J1-1691, N1-0140 and J1-2481).
The work of Irene Sciriha and Patrick Fowler is supported by The University of Malta under the project
Graph Spectra, Computer Network Design and Electrical Conductivity in Nano-Structures MATRP01-20.

\nocite{*}

\section*{ORCID iDs}
Nino Ba{\v s}i{\'c} \includegraphics[scale=0.05]{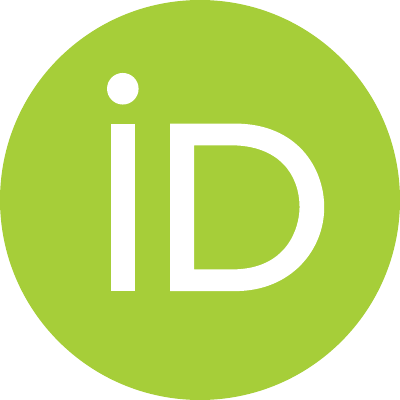} \href{https://orcid.org/0000-0002-6555-8668}{https://orcid.org/0000-0002-6555-8668}

\noindent
Patrick~W.~Fowler \includegraphics[scale=0.05]{ORCID_icon.pdf} \href{https://orcid.org/0000-0003-2106-1104}{https://orcid.org/0000-0003-2106-1104}

\noindent
Toma{\v z} Pisanski \includegraphics[scale=0.05]{ORCID_icon.pdf} \href{https://orcid.org/0000-0002-1257-5376}{https://orcid.org/0000-0002-1257-5376}

\noindent
Irene Sciriha \includegraphics[scale=0.05]{ORCID_icon.pdf} \href{https://orcid.org/0000-0002-5477-6803}{https://orcid.org/0000-0002-5477-6803}

\bibliographystyle{amcjoucc}
\bibliography{references}

\begin{appendices}

\newpage
\section{Small sign-unbalanced nut graphs}
\label{sec:appA}

For each parameter pair $(n, \rho)$ marked only with  \protect\rC\ in Table~{\ref{tab:pinky}}, an
example of a sign-unbalanced nut graph $\Gamma$ is given. The entry lists $n$ and $\rho$, followed by $|\Sigma|$ (the number of negative edges),
the $n\rho/2$ signed edges $E(G)$, where \rC\ in the superscript denotes a negative edge, and finally the kernel eigenvector $\bf x$ as
a list of integer entries. 

\small
\vspace{\baselineskip}
\noindent
$\boldsymbol{n = 14, \rho = 3, |\Sigma| = 2}$

\noindent
$
E(G) = \{ (0, 6), (0, 7), (0, 8), (1, 7), (1, 8), (1, 13), (2, 9), (2, 10), (2, 11), (3, 9), (3, 11)^{\rC}, (3, 12), (4, 10),  \\
(4, 12), (4, 13), (5, 11), (5, 12), (5, 13), (6, 9), (6, 10), (7, 8)^{\rC} \}
$

\noindent
$
{\bf x} = \begin{bmatrix} 1 & 1 & 3 & 1 & 1 & -2 & -4 & 2 & 2 & -2 & 1 & 1 & 3 & -4 \end{bmatrix}
$

\vspace{\baselineskip}
\noindent
$\boldsymbol{n = 16, \rho = 3, |\Sigma| = 3}$

\noindent
$
E(G) = \{ (0, 7), (0, 8), (0, 12), (1, 8)^{\rC}, (1, 9), (1, 10), (2, 9), (2, 10)^{\rC}, (2, 11), (3, 9), (3, 14), 
(3, 15)^{\rC},  (4, 10), \\ (4, 14), (4, 15), (5, 11), (5, 12), (5, 13), (6, 12), (6, 13), (6, 14), (7, 11), (7, 13), (8, 15) \}
$

\noindent
$
{\bf x} = \begin{bmatrix} 2 & 1 & 2 & -3 & 1 & -4 & 2 & 2 & -4 & -3 & -1 & 2 & 2 & -4 & 2 & -1 \end{bmatrix}
$

\vspace{\baselineskip}
\noindent
$\boldsymbol{n=  5, \rho = 4, |\Sigma| = 3}$ 

\noindent
$
E(G) = \{ (0, 1), (0, 2), (0, 3), (0, 4), (1, 2)^{\rC}, (1, 3), (1, 4), (2, 3)^{\rC}, (2, 4), (3, 4)^{\rC} \}
$

\noindent
$
{\bf x} = \begin{bmatrix} 1 & -1 & 1 & 1 & -1 \end{bmatrix}
$

\vspace{\baselineskip}
\noindent
$\boldsymbol{n = 7, \rho = 4, |\Sigma| = 1}$

\noindent
$
E(G) = \{ (0, 2), (0, 3), (0, 4), (0, 5), (1, 3)^{\rC}, (1, 4), (1, 5), (1, 6), (2, 4), (2, 5), (2, 6), (3, 5), (3, 6), (4, 6) \}
$

\noindent
$
{\bf x} = \begin{bmatrix} 1 & -1 & 1 & -1 & 1 & -1 & -1 \end{bmatrix}
$

\vspace{\baselineskip}
\noindent
$\boldsymbol{n = 9, \rho = 4, |\Sigma| = 3}$

\noindent
$
E(G) = \{ (0, 4), (0, 5), (0, 6), (0, 7), (1, 4), (1, 5)^{\rC}, (1, 6), (1, 7)^{\rC}, (2, 4), (2, 6), (2, 7)^{\rC}, (2, 8), (3, 5), (3, 6), \\ 
(3, 7), (3, 8), (4, 8), (5, 8) \}
$

\noindent
$
{\bf x} = \begin{bmatrix} 1 & -1 & 1 & -1 & -1 & 1 & 1 & -1 & -1 \end{bmatrix}
$

\vspace{\baselineskip}
\noindent
$\boldsymbol{n = 11, \rho = 4, |\Sigma| = 3}$

\noindent
$
E(G) = \{ (0, 5), (0, 6), (0, 7), (0, 8), (1, 5), (1, 8)^{\rC}, (1, 9), (1, 10), (2, 6), (2, 7), (2, 8), (2, 9), 
(3, 6)^{\rC}, (3, 7), \\ (3, 9), (3, 10), (4, 6), (4, 7)^{\rC}, (4, 9), (4, 10), (5, 8), (5, 10) \}
$

\noindent
$
{\bf x} = \begin{bmatrix} 1 & -1 & -1 & 1 & 1 & -1 & 1 & 1 & -1 & -1 & 1 \end{bmatrix}
$

\vspace{\baselineskip}
\noindent
$\boldsymbol{n = 13, \rho = 4, |\Sigma| = 5}$

\noindent
$
E(G) = \{ (0, 6), (0, 7), (0, 8), (0, 9), (1, 6), (1, 7), (1, 8)^{\rC}, (1, 9)^{\rC}, (2, 7), (2, 8), (2, 9), (2, 10)^{\rC}, (3, 7), (3, 10), \\
(3, 11)^{\rC}, (3, 12), (4, 8), (4, 10), (4, 11), (4, 12), (5, 9), (5, 10), (5, 11)^{\rC}, (5, 12), (6, 11), (6, 12) \}
$

\noindent
$
{\bf x} = \begin{bmatrix} 1 & -1 & -1 & 1 & -1 & -1 & 1 & -1 & 1 & -1 & -1 & -1 & 1 \end{bmatrix}
$

\vspace{\baselineskip}
\noindent
$\boldsymbol{n=  8, \rho = 6, |\Sigma| = 6}$

\noindent
$
E(G) = \{ (0, 1), (0, 2), (0, 3), (0, 4), (0, 5), (0, 6), (7, 1)^{\rC}, (7, 2)^{\rC}, (7, 3), (7, 4), (7, 5), (7, 6), (1, 2)^{\rC}, (1, 3)^{\rC}, \\
(1, 4), (1, 5), (2, 3), (2, 4), (2, 6)^{\rC}, (3, 5)^{\rC}, (3, 6), (4, 5), (4, 6), (5, 6) \}
$

\noindent
$
{\bf x} = \begin{bmatrix} 1 & 1 & -1 & 1 & -1 & -1 & 1 & -1 \end{bmatrix}
$

\vspace{\baselineskip}
\noindent
$\boldsymbol{n = 9, \rho = 6, |\Sigma| = 6}$

\noindent
$
E(G) = \{ (0, 3), (0, 4), (0, 5), (0, 6), (0, 7), (0, 8), (1, 3), (1, 4), (1, 5), (1, 6)^{\rC}, (1, 7), (1, 8)^{\rC}, (2, 3), (2, 4)^{\rC}, \\
(2, 5)^{\rC}, (2, 6), (2, 7), (2, 8)^{\rC}, (3, 5), (3, 6), (3, 7)^{\rC}, (4, 6), (4, 7), (4, 8), (5, 7), (5, 8), (6, 8) \}
$

\noindent
$
{\bf x} = \begin{bmatrix} 1 & -2 & 1 & -1 & -4 & 3 & -1 & 2 & 1 \end{bmatrix}
$

\pagebreak
\noindent
$\boldsymbol{n = 10, \rho = 6, |\Sigma| = 7}$

\noindent
$
E(G) = \{ (0, 4), (0, 5), (0, 6), (0, 7), (0, 8), (0, 9), (1, 4), (1, 5)^{\rC}, (1, 6)^{\rC}, (1, 7), (1, 8), (1, 9), (2, 4), (2, 5)^{\rC}, \\
(2, 6), (2, 7), (2, 8)^{\rC}, (2, 9), (3, 4), (3, 5), (3, 6)^{\rC}, (3, 7), (3, 8), (3, 9)^{\rC}, (4, 7), (4, 8), (5, 7), (5, 9)^{\rC}, (6, 8), \\
(6, 9) \}
$

\noindent
$
{\bf x} = \begin{bmatrix} 1 & 1 & -1 & -1 & -1 & 1 & -1 & 1 & -1 & 1 \end{bmatrix}
$

\vspace{\baselineskip}
\noindent
$\boldsymbol{n = 11, \rho = 6, |\Sigma| = 5}$

\noindent
$
E(G) = \{ (0, 4), (0, 5), (0, 6), (0, 7), (0, 8), (0, 9), (1, 5), (1, 6)^{\rC}, (1, 7)^{\rC}, (1, 8), (1, 9), (1, 10), (2, 5), (2, 6), \\
(2, 7)^{\rC}, (2, 8)^{\rC}, (2, 9), (2, 10), (3, 5)^{\rC}, (3, 6), (3, 7), (3, 8), (3, 9), (3, 10), (4, 6), (4, 7), (4, 8), (4, 9), (4, 10), \\
(5, 9), (5, 10), (6, 8), (7, 10) \}
$

\noindent
$
{\bf x} = \begin{bmatrix} 2 & -1 & -1 & 4 & -2 & -2 & -4 & 2 & -4 & 10 & -6 \end{bmatrix}
$

\vspace{\baselineskip}
\noindent
$\boldsymbol{n = 10, \rho = 7, |\Sigma| = 7}$

\noindent
$
E(G) = \{ (0, 3)^{\rC}, (0, 4)^{\rC}, (0, 5), (0, 6), (0, 7), (0, 8), (0, 9), (1, 3), (1, 4), (1, 5)^{\rC}, (1, 6), (1, 7), (1, 8), (1, 9), \\
(2, 3), (2, 4), (2, 5), (2, 6)^{\rC}, (2, 7), (2, 8), (2, 9), (3, 6), (3, 7)^{\rC}, (3, 8), (3, 9)^{\rC}, (4, 6), (4, 7), (4, 8), (4, 9), \\
(5, 6), (5, 7)^{\rC}, (5, 8), (5, 9), (6, 8), (7, 9) \}
$

\noindent
$
{\bf x} = \begin{bmatrix} 3 & 1 & 2 & 2 & -4 & -2 & -2 & 2 & 2 & -2 \end{bmatrix}
$

\vspace{\baselineskip}
\noindent
$\boldsymbol{n = 9, \rho = 8, |\Sigma| = 6}$

\noindent
$
E(G) = \{ (0, 1), (0, 2), (0, 3), (0, 4), (0, 5), (0, 6), (0, 7), (0, 8), (1, 2), (1, 3)^{\rC}, (1, 4), (1, 5), (1, 6)^{\rC}, (1, 7), \\
(1, 8), (2, 3), (2, 4), (2, 5), (2, 6), (2, 7), (2, 8)^{\rC}, (3, 4), (3, 5), (3, 6), (3, 7), (3, 8), (4, 5), (4, 6), (4, 7)^{\rC}, \\
(4, 8)^{\rC}, (5, 6)^{\rC}, (5, 7), (5, 8), (6, 7), (6, 8), (7, 8) \}
$

\noindent
$
{\bf x} = \begin{bmatrix} 1 & 1 & -1 & -1 & 1 & -1 & 1 & -1 & 1 \end{bmatrix}
$

\vspace{\baselineskip}
\noindent
$\boldsymbol{n=  10, \rho = 8, |\Sigma| = 6}$ 

\noindent
$
E(G) = \{ (0, 1), (0, 2), (0, 3), (0, 4), (0, 5), (0, 6), (0, 7), (0, 8), (9, 1)^{\rC}, (9, 2)^{\rC}, (9, 3), (9, 4), (9, 5), (9, 6), \\
(9, 7), (9, 8), (1, 2)^{\rC}, (1, 3)^{\rC}, (1, 4), (1, 5), (1, 6), (1, 7), (2, 3), (2, 4), (2, 5), (2, 6), (2, 8), (3, 4)^{\rC}, (3, 5), \\
(3, 7), (3, 8), (4, 6), (4, 7), (4, 8), (5, 6), (5, 7), (5, 8)^{\rC}, (6, 7), (6, 8), (7, 8) \}
$

\noindent
$
{\bf x} = \begin{bmatrix} 1 & 1 & -1 & 1 & -1 & -1 & -1 & 1 & 1 & -1 \end{bmatrix}
$

\vspace{\baselineskip}
\noindent
$\boldsymbol{n = 11, \rho = 8, |\Sigma| = 2}$

\noindent
$
E(G) = \{ (0, 2)^{\rC}, (0, 3), (0, 4)^{\rC}, (0, 5), (0, 6), (0, 7), (0, 8), (0, 9), (1, 3), (1, 4), (1, 5), (1, 6), (1, 7), (1, 8), \\
(1, 9), (1, 10), (2, 4), (2, 5), (2, 6), (2, 7), (2, 8), (2, 9), (2, 10), (3, 5), (3, 6), (3, 7), (3, 8), (3, 9), (3, 10), \\
(4, 6), (4, 7), (4, 8), (4, 9), (4, 10), (5, 7), (5, 8), (5, 9), (5, 10), (6, 8), (6, 9), (6, 10), (7, 9), (7, 10), (8, 10) \}
$

\noindent
$
{\bf x} = \begin{bmatrix} 1 & -3 & 3 & -1 & -1 & 1 & 1 & -1 & 1 & 1 & -1 \end{bmatrix}
$

\vspace{\baselineskip}
\noindent
$\boldsymbol{n = 13, \rho = 8, |\Sigma| = 4}$

\noindent
$
E(G) = \{ (0, 1), (0, 2), (0, 3), (0, 4), (0, 9), (0, 10), (0, 11), (0, 12), (1, 2), (1, 3), (1, 4), (1, 5), (1, 10), (1, 11), \\
(1, 12), (2, 3), (2, 4), (2, 5), (2, 6), (2, 11), (2, 12), (3, 4), (3, 5), (3, 6), (3, 7), (3, 12), (4, 5), (4, 6), (4, 7)^{\rC}, \\
(4, 8), (5, 6), (5, 7), (5, 8), (5, 9)^{\rC}, (6, 7), (6, 8), (6, 9), (6, 10), (7, 8), (7, 9), (7, 10), (7, 11), (8, 9), (8, 10), \\
(8, 11), (8, 12), (9, 10), (9, 11)^{\rC}, (9, 12), (10, 11)^{\rC}, (10, 12), (11, 12) \}
$

\noindent
$
{\bf x} = \begin{bmatrix} 3 & -11 & 7 & 5 & 1 & -7 & 7 & 1 & -3 & 7 & -11 & 3 & -1 \end{bmatrix}
$

\vspace{\baselineskip}
\noindent
$\boldsymbol{n=  12, \rho = 9, |\Sigma| = 7}$ 

\noindent
$
E(G) = \{ (0, 3)^{\rC}, (0, 4)^{\rC}, (0, 5)^{\rC}, (0, 6), (0, 7), (0, 8), (0, 9), (0, 10), (0, 11), (1, 3)^{\rC}, (1, 4), (1, 5), (1, 6), \\
(1, 7), (1, 8)^{\rC}, (1, 9), (1, 10), (1, 11), (2, 3), (2, 4), (2, 5), (2, 6), (2, 7), (2, 8), (2, 9), (2, 10), (2, 11), (3, 5), \\
(3, 6)^{\rC}, (3, 8), (3, 9), (3, 10), (3, 11), (4, 6), (4, 7), (4, 8), (4, 9), (4, 10), (4, 11), (5, 7)^{\rC}, (5, 8), (5, 9), (5, 10), \\
(5, 11), (6, 8), (6, 9), (6, 10), (6, 11), (7, 8), (7, 9), (7, 10), (7, 11), (8, 10), (9, 11) \} 
$

\noindent
$
{\bf x} = \begin{bmatrix} 2 & 5 & -3 & 2 & -4 & 2 & -6 & 4 & -2 & -2 & 8 & -2 \end{bmatrix}
$

\pagebreak
\noindent
$\boldsymbol{n = 14, \rho = 9, |\Sigma| = 14}$

\noindent
$
E(G) = \{ (0, 1), (0, 2), (0, 3), (0, 4), (0, 5), (0, 6), (0, 7), (0, 10)^{\rC}, (0, 11)^{\rC}, (1, 2), (1, 3), (1, 4), (1, 5), (1, 6), \\
(1, 7), (1, 8)^{\rC}, (1, 13)^{\rC}, (2, 3), (2, 4), (2, 5), (2, 6), (2, 8), (2, 10)^{\rC}, (2, 13)^{\rC}, (3, 4), (3, 5), (3, 6)^{\rC}, (3, 9), \\
(3, 10), (3, 11)^{\rC}, (4, 7), (4, 8)^{\rC}, (4, 9), (4, 10), (4, 12)^{\rC}, (5, 7)^{\rC}, (5, 8), (5, 9), (5, 11), (5, 12)^{\rC}, (6, 8), (6, 9)^{\rC}, \\
(6, 11), (6, 12), (6, 13), (7, 8), (7, 9)^{\rC}, (7, 10), (7, 12), (7, 13), (8, 11), (8, 12), (8, 13), (9, 10), (9, 11), (9, 12), \\
(9, 13), (10, 11), (10, 12), (10, 13), (11, 12), (11, 13), (12, 13) \}
$

\noindent
$
{\bf x} = \begin{bmatrix} 4 & -5 & -4 & -4 & 6 & 4 & 1 & 2 & 1 & -2 & -1 & 1 & -11 & 8 \end{bmatrix}
$

\vspace{\baselineskip}
\noindent
$\boldsymbol{n=  12, \rho = 10, |\Sigma| = 6}$ 

\noindent
$
E(G) = \{ (0, 1), (0, 2), (0, 3), (0, 4), (0, 5), (0, 6), (0, 7), (0, 8), (0, 9), (0, 10), (11, 1)^{\rC}, (11, 2)^{\rC}, (11, 3), \\
(11, 4), (11, 5), (11, 6), (11, 7), (11, 8), (11, 9), (11, 10), (1, 2), (1, 3)^{\rC}, (1, 4), (1, 5), (1, 6), (1, 7), (1, 8), \\
(1, 9), (2, 3), (2, 4)^{\rC}, (2, 5), (2, 6), (2, 7), (2, 8), (2, 10), (3, 4), (3, 5)^{\rC}, (3, 6), (3, 7), (3, 9), (3, 10), (4, 5), \\
(4, 6), (4, 8), (4, 9), (4, 10), (5, 7), (5, 8), (5, 9), (5, 10), (6, 7)^{\rC}, (6, 8), (6, 9), (6, 10), (7, 8), (7, 9), (7, 10), \\
(8, 9), (8, 10), (9, 10) \}
$

\noindent
$
{\bf x} = \begin{bmatrix} 1 & 1 & -1 & 1 & 1 & -1 & -1 & 1 & -1 & 1 & -1 & -1 \end{bmatrix}
$

\vspace{\baselineskip}
\noindent
$\boldsymbol{n = 13, \rho = 10, |\Sigma| = 7}$

\noindent
$
E(G) = \{ (0, 3)^{\rC}, (0, 4)^{\rC}, (0, 5)^{\rC}, (0, 6), (0, 7), (0, 8), (0, 9), (0, 10), (0, 11), (0, 12), (1, 3)^{\rC}, (1, 4), (1, 5), \\
(1, 6), (1, 7)^{\rC}, (1, 8), (1, 9), (1, 10), (1, 11), (1, 12), (2, 3), (2, 4)^{\rC}, (2, 5), (2, 6), (2, 7), (2, 8), (2, 9), (2, 10), \\
(2, 11), (2, 12), (3, 5), (3, 7), (3, 8), (3, 9), (3, 10), (3, 11), (3, 12), (4, 6), (4, 7), (4, 8), (4, 9), (4, 10), (4, 11), \\
(4, 12), (5, 7), (5, 8), (5, 9), (5, 10), (5, 11), (5, 12), (6, 7), (6, 8), (6, 9), (6, 10), (6, 11), (6, 12), (7, 9), (7, 10), \\
(7, 11), (8, 10), (8, 11), (8, 12), (9, 11)^{\rC}, (9, 12), (10, 12) \}
$

\noindent
$
{\bf x} = \begin{bmatrix} 1 & -4 & -3 & 4 & 2 & -4 & -2 & -2 & 4 & -4 & 6 & -4 & 4 \end{bmatrix}
$

\vspace{\baselineskip}
\noindent
$\boldsymbol{n=  14, \rho = 10, |\Sigma| = 7}$

\noindent
$
E(G) = \{  (0, 4)^{\rC}, (0, 5)^{\rC}, (0, 6)^{\rC}, (0, 7), (0, 8), (0, 9), (0, 10), (0, 11), (0, 12), (0, 13), (1, 4)^{\rC}, (1, 5), (1, 6), \\
(1, 7), (1, 8), (1, 9), (1, 10), (1, 11), (1, 12), (1, 13), (2, 4), (2, 5)^{\rC}, (2, 6), (2, 7), (2, 8), (2, 9), (2, 10), (2, 11), \\
(2, 12), (2, 13), (3, 4), (3, 5), (3, 6), (3, 7), (3, 8)^{\rC}, (3, 9), (3, 10), (3, 11), (3, 12), (3, 13), (4, 8), (4, 9), (4, 10), \\
(4, 11), (4, 12), (4, 13), (5, 8), (5, 9), (5, 10), (5, 11), (5, 12), (5, 13), (6, 8)^{\rC}, (6, 9), (6, 10), (6, 11), (6, 12), \\
(6, 13), (7, 8), (7, 9), (7, 10), (7, 11), (7, 12), (7, 13), (8, 11), (8, 12), (9, 11), (9, 13), (10, 12), (10, 13)  \}
$

\noindent
$
{\bf x} = \begin{bmatrix} 1 & -1 & -1 & 3 & -1 & -1 & 1 & 1 & -1 & -1 & -1 & 3 & 3 & -5 \end{bmatrix}
$

\vspace{\baselineskip}
\noindent
$\boldsymbol{n=  14, \rho = 11, |\Sigma| = 7}$ 

\noindent
$
E(G) = \{ (0, 3)^{\rC}, (0, 4)^{\rC}, (0, 5)^{\rC}, (0, 6), (0, 7), (0, 8), (0, 9), (0, 10), (0, 11), (0, 12), (0, 13), (1, 3)^{\rC}, (1, 4), \\
(1, 5), (1, 6), (1, 7)^{\rC}, (1, 8), (1, 9), (1, 10), (1, 11), (1, 12), (1, 13), (2, 3), (2, 4), (2, 5), (2, 6), (2, 7), (2, 8), \\
(2, 9), (2, 10), (2, 11), (2, 12), (2, 13), (3, 5), (3, 6)^{\rC}, (3, 7), (3, 9), (3, 10), (3, 11), (3, 12), (3, 13), (4, 6), \\
(4, 7), (4, 8), (4, 9), (4, 10), (4, 11), (4, 12), (4, 13), (5, 7), (5, 8), (5, 9)^{\rC}, (5, 10), (5, 11), (5, 12), (5, 13), \\ 
(6, 8), (6, 9), (6, 10), (6, 11), (6, 12), (6, 13), (7, 9), (7, 10), (7, 11), (7, 12), (7, 13), (8, 9), (8, 10), (8, 11),\\
 (8, 12), (8, 13), (9, 11), (9, 12), (10, 12), (10, 13), (11, 13) \}
$

\noindent
$
{\bf x} = \begin{bmatrix} 1 & 2 & -1 & 2 & -4 & 2 & -2 & -2 & 2 & 2 & -2 & 2 & 2 & -2 \end{bmatrix}
$

\normalsize
\newpage
\section{Sign-unbalanced nut graphs for low values of $\boldsymbol{\rho}$}
\label{sec:appB}

For parameter pairs $(n, \rho)$ in Table~{\ref{tab:pinky}} marked by  \protect\kC \rC\ 
to indicate that both a sign-balanced and a sign-unbalanced nut graphs exist, an example of a sign-unbalanced nut graph 
$\Gamma$ is given. These are needed for the proof of Theorem~{\ref{thm:new2}}.
Notation as in Appendix~\ref{sec:appA}.

\small
\vspace{\baselineskip}
\noindent
$\boldsymbol{n=  12, \rho = 3, |\Sigma| = 5}$ 

\noindent
$
E(G) = \{ (0, 5), (0, 7), (0, 8), (1, 6), (1, 7)^{\rC}, (1, 11), (2, 6)^{\rC}, (2, 8), (2, 10)^{\rC}, (3, 7), (3, 9), (3, 10)^{\rC}, (4, 8), \\
(4, 9), (4, 11)^{\rC}, (5, 10), (5, 11), (6, 9) \}
$

\noindent
$
{\bf x} = \begin{bmatrix} 2 & 1 & -1 & -1 & -1 & -2 & 2 & 1 & 1 & -2 & -1 & -1 \end{bmatrix}
$

\vspace{\baselineskip}
\noindent
$\boldsymbol{n = 8, \rho = 4, |\Sigma| = 2}$

\noindent
$
E(G) = \{ (0, 3), (0, 4), (0, 5), (0, 6), (1, 3), (1, 5), (1, 6), (1, 7), (2, 4), (2, 5)^{\rC}, (2, 6), (2, 7), (3, 4), (3, 6), \\
(4, 7)^{\rC}, (5, 7) \}
$

\noindent
$
{\bf x} = \begin{bmatrix} 1 & -1 & 1 & -1 & 1 & 1 & -1 & 1 \end{bmatrix}
$

\vspace{\baselineskip}
\noindent
$\boldsymbol{n = 10, \rho = 4, |\Sigma| = 2}$

\noindent
$
E(G) = \{ (0, 4), (0, 5), (0, 6), (0, 7), (1, 5), (1, 6), (1, 8), (1, 9), (2, 5), (2, 7), (2, 8)^{\rC}, (2, 9), (3, 6), (3, 7), \\
(3, 8), (3, 9), (4, 6), (4, 7)^{\rC}, (4, 8), (5, 9) \}
$

\noindent
$
{\bf x} = \begin{bmatrix} 1 & 1 & -1 & -1 & -1 & 1 & -1 & 1 & 1 & -1 \end{bmatrix}
$

\vspace{\baselineskip}
\noindent
$\boldsymbol{n = 12, \rho = 4, |\Sigma| = 2}$

\noindent
$
E(G) = \{ (0, 5), (0, 6), (0, 7), (0, 8), (1, 6), (1, 7), (1, 8), (1, 9), (2, 6), (2, 7)^{\rC}, (2, 8), (2, 11), (3, 7), (3, 9), \\
(3, 10), (3, 11), (4, 8), (4, 9)^{\rC}, (4, 10), (4, 11), (5, 9), (5, 10), (5, 11), (6, 10) \}
$

\noindent
$
{\bf x} = \begin{bmatrix} 1 & -1 & 1 & 1 & -1 & -1 & 1 & 1 & -1 & -1 & -1 & 1 \end{bmatrix}
$

\vspace{\baselineskip}
\noindent
$\boldsymbol{n = 14, \rho = 4, |\Sigma| = 1}$

\noindent
$
E(G) = \{ (0, 6), (0, 7), (0, 9), (0, 11), (1, 6), (1, 8), (1, 12), (1, 13), (2, 7), (2, 8), (2, 9), (2, 10), (3, 7), (3, 9),\\
 (3, 10), (3, 12), (4, 8), (4, 11), (4, 12), (4, 13), (5, 10), (5, 11), (5, 12), (5, 13)^{\rC}, (6, 10), (6, 11), (7, 9), (8, 13) \}
$

\noindent
$
{\bf x} = \begin{bmatrix} 1 & 4 & 2 & -4 & -3 & 3 & -1 & 1 & 2 & 1 & -4 & -1 & 2 & -3 \end{bmatrix}
$

\vspace{\baselineskip}
\noindent
$\boldsymbol{n = 10, \rho = 5, |\Sigma| = 6}$

\noindent
$
E(G) = \{ (0, 4), (0, 5), (0, 6), (0, 7), (0, 8), (1, 4), (1, 6)^{\rC}, (1, 7)^{\rC}, (1, 8)^{\rC}, (1, 9), (2, 5), (2, 6), (2, 7)^{\rC}, (2, 8), \\
(2, 9), (3, 5), (3, 6)^{\rC}, (3, 7), (3, 8), (3, 9), (4, 5), (4, 7), (4, 8)^{\rC}, (5, 9), (6, 9) \}
$

\noindent
$
{\bf x} = \begin{bmatrix} 2 & 1 & 1 & -1 & 1 & 1 & -2 & -2 & 2 & -3 \end{bmatrix}
$

\vspace{\baselineskip}
\noindent
$\boldsymbol{n = 12, \rho = 5, |\Sigma| = 7}$

\noindent
$
E(G) = \{ (0, 5), (0, 6), (0, 7), (0, 10), (0, 11), (1, 6), (1, 7), (1, 8), (1, 9), (1, 10)^{\rC}, (2, 6), (2, 7), (2, 8), (2, 9), \\
(2, 11), (3, 6), (3, 8), (3, 9)^{\rC}, (3, 10)^{\rC}, (3, 11), (4, 7), (4, 8)^{\rC}, (4, 9), (4, 10)^{\rC}, (4, 11), (5, 8), (5, 9), (5, 10),\\
 (5, 11)^{\rC}, (6, 7)^{\rC} \}
$

\noindent
$
{\bf x} = \begin{bmatrix} 2 & 6 & 2 & -6 & -6 & -8 & 4 & 4 & -3 & -3 & 2 & -2 \end{bmatrix}
$

\vspace{\baselineskip}
\noindent
$\boldsymbol{n=  14, \rho = 5, |\Sigma| = 7}$

\noindent
$
E(G) = \{(0, 1), (0, 2), (0, 3), (0, 4), (0, 5)^{\rC}, (1, 2), (1, 3), (1, 4), (1, 12)^{\rC}, (2, 3)^{\rC}, (2, 5), (2, 6), (3, 5), (3, 7), \\
(4, 7)^{\rC}, (4, 8), (4, 9), (5, 8), (5, 10), (6, 9), (6, 10), (6, 11), (6, 13)^{\rC}, (7, 9), (7, 11), (7, 12), (8, 9)^{\rC}, (8, 10), \\
(8, 13), (9, 13), (10, 11)^{\rC}, (10, 12), (11, 12), (11, 13), (12, 13) \}
$

\noindent
$
{\bf x} = \begin{bmatrix}    2 & 1  & 6  & -2  & -4 & 1 &  -6 &   2  & -4 & 3  &  2 &-7  &  2&   4 \end{bmatrix} 
$

\pagebreak
\noindent
$\boldsymbol{n = 16, \rho = 5, |\Sigma| = 3}$

\noindent
$
E(G) = \{ (0, 7), (0, 8), (0, 9)^{\rC}, (0, 14), (0, 15), (1, 8), (1, 9), (1, 10), (1, 11), (1, 12), (2, 8), (2, 9), (2, 10),\\
(2, 12), (2, 13), (3, 8), (3, 9), (3, 11), (3, 12), (3, 13), (4, 9), (4, 10), (4, 11), (4, 14), (4, 15), (5, 10)^{\rC}, (5, 12),\\
(5, 13), (5, 14), (5, 15), (6, 11), (6, 12), (6, 13), (6, 14)^{\rC}, (6, 15), (7, 10), (7, 11), (7, 13), (7, 15), (8, 14) \}
$

\noindent
$
{\bf x} = \begin{bmatrix} 1 & -2 & 1 & 1 & 1 & -2 & 2 & -2 & 2 & 1 & -1 & -1 & -1 & -1 & -1 & 2 \end{bmatrix}
$

\vspace{\baselineskip}
\noindent
$\boldsymbol{n=  18, \rho = 5, |\Sigma| = 18}$

\noindent
$
E(G) = \{(0, 1), (0, 2), (0, 3), (0, 5)^{\rC}, (0, 7)^{\rC}, (1, 2), (1, 3), (1, 7)^{\rC}, (1, 15)^{\rC}, (2, 4), (2, 10)^{\rC}, (2, 13)^{\rC}, (3, 5), \\
(3, 8)^{\rC}, (3, 17)^{\rC}, (4, 6), (4, 7), (4, 8)^{\rC}, (4, 10)^{\rC}, (5, 8), (5, 9), (5, 11)^{\rC}, (6, 8), (6, 10), (6, 16)^{\rC}, (6, 17)^{\rC}, \\
(7, 11), (7, 12), (8, 11), (9, 12)^{\rC}, (9, 13), (9, 14), (9, 16)^{\rC}, (10, 11), (10, 15),  (11, 14)^{\rC}, (12, 15)^{\rC}, (12, 16), \\
(12, 17), (13, 14)^{\rC}, (13, 15), (13, 16), (14, 15), (14, 17), (16, 17) \}
$

\noindent
$
{\bf x} = \begin{bmatrix} 1 &  5 & -9  &-5  & 3 & -7  & 8  &-2 & -6 &  9 &  3 & -3  & 6 &  6  & 2& -11&  -5&   5 \end{bmatrix} 
$

\vspace{\baselineskip}
\noindent
$\boldsymbol{n = 12, \rho = 6, |\Sigma| = 12}$

\noindent
$
E(G) = \{ (0, 1), (0, 2), (0, 3), (0, 4), (0, 7)^{\rC}, (0, 10)^{\rC}, (1, 2), (1, 3), (1, 4), (1, 6)^{\rC}, (1, 11)^{\rC}, (2, 3)^{\rC}, (2, 5), \\
(2, 6), (2, 7)^{\rC}, (3, 5), (3, 7), (3, 9)^{\rC}, (4, 5), (4, 6)^{\rC}, (4, 8), (4, 10)^{\rC}, (5, 8)^{\rC}, (5, 9), (5, 11)^{\rC}, (6, 7), (6, 8), \\
(6, 10), (7, 9), (7, 11), (8, 9)^{\rC}, (8, 10), (8, 11), (9, 10), (9, 11), (10, 11) \}
$

\noindent
$
{\bf x} = \begin{bmatrix} 5 & -1 & 1 & 1 & -3 & -3 & 1 & 1 & -3 & 1 & -3 & 3 \end{bmatrix}
$

\vspace{\baselineskip}
\noindent
$\boldsymbol{n=  13, \rho = 6, |\Sigma| = 8}$ 

\noindent
$
E(G) = \{
(0, 6), (0, 7), (0, 8), (0, 9), (0, 10), (0, 11), (1, 6), (1, 7), (1, 9), (1, 10)^{\rC}, (1, 11), (1, 12)^{\rC}, (2, 6), \\
(2, 8), (2, 9)^{\rC}, (2, 10), (2, 11), (2, 12)^{\rC}, (3, 7), (3, 8)^{\rC}, (3, 9), (3, 10), (3, 11), (3, 12), (4, 7)^{\rC}, (4, 8), (4, 9), \\
(4, 10), (4, 11), (4, 12), (5, 7), (5, 8), (5, 9)^{\rC}, (5, 10)^{\rC}, (5, 11), (5, 12), (6, 7), (6, 8), (6, 12) \}
$

\noindent
$
{\bf x} = \begin{bmatrix} 8 & -4 & -4 & -2 & -2 & 4 & -8 & 2 & 2 & 1 & 1 & 2 & -4 \end{bmatrix}
$

\vspace{\baselineskip}
\noindent
$\boldsymbol{n = 14, \rho = 6, |\Sigma| = 14}$

\noindent
$
E(G) = \{ (0, 1), (0, 2), (0, 3), (0, 4), (0, 8)^{\rC}, (0, 12)^{\rC}, (1, 2), (1, 3), (1, 4)^{\rC}, (1, 5), (1, 6)^{\rC}, (2, 4), (2, 6), \\
(2, 7)^{\rC}, (2, 13)^{\rC}, (3, 7), (3, 8), (3, 9)^{\rC}, (3, 11)^{\rC}, (4, 7), (4, 8), (4, 10)^{\rC}, (5, 6)^{\rC}, (5, 7), (5, 9), (5, 10), (5, 12)^{\rC}, \\
(6, 9), (6, 11), (6, 12), (7, 8)^{\rC}, (7, 10), (8, 11), (8, 13), (9, 11), (9, 12), (9, 13)^{\rC}, (10, 11)^{\rC}, (10, 12), (10, 13), \\
(11, 13), (12, 13) \}
$

\noindent
$
{\bf x} = \begin{bmatrix} 3 & -4 & 7 & -11 & 6 & 3 & -4 & -5 & -1 & -4 & 8 & -3 & -1 & 6 \end{bmatrix}
$

\vspace{\baselineskip}
\noindent
$\boldsymbol{n = 15, \rho = 6, |\Sigma| = 1}$

\noindent
$
E(G) = \{ (0, 5), (0, 6), (0, 9), (0, 11), (0, 12), (0, 13), (1, 6), (1, 7), (1, 8), (1, 9), (1, 10), (1, 11), (2, 7), (2, 8),\\
(2, 10), (2, 11)^{\rC}, (2, 12), (2, 14), (3, 7), (3, 8), (3, 11), (3, 12), (3, 13), (3, 14), (4, 9), (4, 10), (4, 11), (4, 12),\\
(4, 13), (4, 14), (5, 8), (5, 9), (5, 10), (5, 13), (5, 14), (6, 9), (6, 10), (6, 11), (6, 12), (7, 12), (7, 13), (7, 14),\\
(8, 13), (8, 14), (9, 10) \}
$

\noindent
$
{\bf x} = \begin{bmatrix}  1 & -2 & 1 & 1 & -1 & 1 & 2 & -4 & 2 & -1 & -1 & 2 & 1 & -5 & 4 \end{bmatrix}
$

\vspace{\baselineskip}
\noindent
$\boldsymbol{n = 16, \rho = 6, |\Sigma| = 8}$

\noindent
$
E(G) = \{ (0, 1), (0, 2), (0, 3), (0, 4), (0, 5), (0, 6)^{\rC}, (1, 2), (1, 3), (1, 4), (1, 5), (1, 15)^{\rC}, (2, 3), (2, 6), (2, 7), \\
(2, 12)^{\rC}, (3, 6), (3, 7)^{\rC}, (3, 8), (4, 7), (4, 9), (4, 10), (4, 11)^{\rC}, (5, 9), (5, 10)^{\rC}, (5, 11), (5, 12), (6, 11), (6, 13), \\
(6, 14), (7, 11), (7, 12), (7, 13), (8, 9)^{\rC}, (8, 10), (8, 11), (8, 12), (8, 14), (9, 13), (9, 14), (9, 15), (10, 11), \\
(10, 13), (10, 15), (12, 14), (12, 15), (13, 14)^{\rC}, (13, 15), (14, 15) \}
$

\noindent
$
{\bf x} = \begin{bmatrix} 15 & -13 & 19 & 7 & -17 & -5 & -9 & 11 & -1 & -13 & -13 & -13 & 11 & 7 & -5 & 19 \end{bmatrix}
$

\pagebreak
\noindent
$\boldsymbol{n = 17, \rho = 6, |\Sigma| = 1}$

\noindent
$
E(G) = \{ (0, 6), (0, 9), (0, 10), (0, 11), (0, 12), (0, 13)^{\rC}, (1, 7), (1, 8), (1, 10), (1, 12), (1, 15), (1, 16), (2, 7), \\
(2, 10), (2, 11), (2, 12), (2, 13), (2, 14), (3, 8), (3, 9), (3, 11), (3, 12), (3, 14), (3, 16), (4, 8), (4, 11), (4, 13),\\
 (4, 14), (4, 15), (4, 16), (5, 9), (5, 10), (5, 13), (5, 14), (5, 15), (5, 16), (6, 9), (6, 11), (6, 13), (6, 15), (6, 16), \\
(7, 8), (7, 10), (7, 14), (7, 15), (8, 13), (8, 14), (9, 12), (9, 16), (10, 12), (11, 15) \}
$

\noindent
$
{\bf x} = \begin{bmatrix} 2 & -1 & -1 & -1 & -3 & -1 & 4 & 3 & 3 & 2 & -1 & -2 & -2 & 1 & 1 & -1 & -2 \end{bmatrix}
$

\vspace{\baselineskip}
\noindent
$\boldsymbol{n = 18, \rho = 6, |\Sigma| = 1}$

\noindent
$
E(G) = \{ (0, 8), (0, 9), (0, 10), (0, 14), (0, 15), (0, 16), (1, 8), (1, 9), (1, 10), (1, 15), (1, 16), (1, 17), (2, 8), \\
(2, 9), (2, 11), (2, 12), (2, 15), (2, 17), (3, 9)^{\rC}, (3, 11), (3, 13), (3, 14), (3, 16), (3, 17), (4, 10), (4, 11), (4, 12),\\
 (4, 13), (4, 14), (4, 15), (5, 10), (5, 12), (5, 13), (5, 15), (5, 16), (5, 17), (6, 10), (6, 13), (6, 14), (6, 15), \\
(6, 16), (6, 17), (7, 11), (7, 12), (7, 13), (7, 14), (7, 16), (7, 17), (8, 11), (8, 12), (8, 13), (9, 11), (9, 12),\\
 (10, 14) \}
$

\noindent
$
{\bf x} = \begin{bmatrix} 1 & -1 & 1 & 1 & -1 & 1 & -1 & -1 & 1 & -1 & 1 & -1 & 1 & -1 & 1 & -1 & -1 & 1 \end{bmatrix}
$

\vspace{\baselineskip}
\noindent
$\boldsymbol{n = 19, \rho = 6, |\Sigma| = 2}$

\noindent
$
E(G) = \{ (0, 8), (0, 10), (0, 11), (0, 12), (0, 13), (0, 14), (1, 8), (1, 10), (1, 11), (1, 13), (1, 14)^{\rC}, (1, 17),\\
 (2, 9), (2, 11), (2, 15), (2, 16), (2, 17), (2, 18), (3, 9), (3, 12), (3, 14), (3, 16), (3, 17), (3, 18), (4, 10), (4, 12), \\
(4, 13), (4, 14), (4, 15), (4, 16), (5, 10), (5, 12), (5, 13), (5, 15), (5, 17), (5, 18), (6, 10), (6, 12), (6, 14), \\
(6, 15), (6, 16), (6, 18), (7, 11), (7, 12), (7, 13), (7, 15), (7, 17), (7, 18), (8, 11), (8, 15), (8, 16), (8, 17)^{\rC}, \\
(9, 13), (9, 14), (9, 16), (9, 18), (10, 11) \}
$

\noindent
$
{\bf x} = \begin{bmatrix} 5 & -3 & 3 & 3 & -3 & 10 & -7 & -8 & 5 & -1 & -2 & -2 & 1 & -3 & 1 & 4 & -1 & 3 & -3 \end{bmatrix}
$

\vspace{\baselineskip}
\noindent
$\boldsymbol{n = 12, \rho = 7, |\Sigma| = 2}$

\noindent
$
E(G) = \{ (0, 3), (0, 4), (0, 6), (0, 7), (0, 8), (0, 9), (0, 11), (1, 3), (1, 5), (1, 6), (1, 7), (1, 8), (1, 10), (1, 11), \\
(2, 4), (2, 5), (2, 6)^{\rC}, (2, 7), (2, 9), (2, 10), (2, 11), (3, 6), (3, 7), (3, 8), (3, 9), (3, 10)^{\rC}, (4, 6), (4, 8), (4, 9),\\
 (4, 10), (4, 11), (5, 7), (5, 8), (5, 9), (5, 10), (5, 11), (6, 9), (6, 11), (7, 9), (7, 10), (8, 10), (8, 11) \}
$

\noindent
$
{\bf x} = \begin{bmatrix} 2 & -1 & -1 & 1 & 1 & -2 & -2 & 1 & 3 & -1 & 2 & -3 \end{bmatrix}
$

\vspace{\baselineskip}
\noindent
$\boldsymbol{n = 14, \rho = 7, |\Sigma| = 1}$

\noindent
$
E(G) = \{ (0, 5), (0, 6), (0, 7), (0, 8), (0, 10), (0, 11), (0, 12), (1, 6), (1, 7), (1, 8), (1, 9), (1, 10), (1, 11), \\
(1, 13), (2, 6), (2, 7), (2, 9), (2, 10), (2, 11)^{\rC}, (2, 12), (2, 13), (3, 6), (3, 8), (3, 9), (3, 10), (3, 11), (3, 12), \\
(3, 13), (4, 7), (4, 8), (4, 9), (4, 10), (4, 11), (4, 12), (4, 13), (5, 7), (5, 8), (5, 9), (5, 10), (5, 11), (5, 12), \\
(6, 11), (6, 12), (6, 13), (7, 9), (7, 13), (8, 12), (8, 13), (9, 10) \}
$

\noindent
$
{\bf x} = \begin{bmatrix} 1 & 7 & -1 & 1 & -10 & 3 & -3 & -3 & 9 & -1 & 3 & -6 & -3 & 1 \end{bmatrix}
$

\vspace{\baselineskip}
\noindent
$\boldsymbol{n = 16, \rho = 7, |\Sigma| = 1}$

\noindent
$
E(G) = \{ (0, 6), (0, 7), (0, 9), (0, 10), (0, 11), (0, 14), (0, 15), (1, 6), (1, 8)^{\rC}, (1, 10), (1, 12), (1, 13), (1, 14),\\
(1, 15), (2, 7), (2, 8), (2, 9), (2, 11), (2, 12), (2, 13), (2, 14), (3, 7), (3, 8), (3, 10), (3, 11), (3, 12), (3, 13),\\
(3, 15), (4, 7), (4, 9), (4, 10), (4, 12), (4, 13), (4, 14), (4, 15), (5, 8), (5, 9), (5, 10), (5, 12), (5, 13), (5, 14),\\
(5, 15), (6, 9), (6, 11), (6, 12), (6, 13), (6, 15), (7, 8), (7, 11), (7, 12), (8, 11), (8, 14), (9, 11), (9, 13), (10, 14),\\
(10, 15) \}
$

\noindent
$
{\bf x} = \begin{bmatrix} 4 & -10 & -8 & -14 & -1 & 21 & 9 & 3 & 3 & 3 & -9 & -6 & 22 & -19 & -6 & 6 \end{bmatrix}
$

\vspace{\baselineskip}
\noindent
$\boldsymbol{n = 18, \rho = 7, |\Sigma| = 2}$

\noindent
$
E(G) = \{ (0, 7)^{\rC}, (0, 8), (0, 11), (0, 12), (0, 13), (0, 14), (0, 15), (1, 8), (1, 9), (1, 10), (1, 12), (1, 14), (1, 16), \\
(1, 17), (2, 9), (2, 10), (2, 11), (2, 13), (2, 14), (2, 15), (2, 17), (3, 9), (3, 10), (3, 12), (3, 13), (3, 14), (3, 15), \\
(3, 16), (4, 9), (4, 10)^{\rC}, (4, 12), (4, 13), (4, 15), (4, 16), (4, 17), (5, 9), (5, 11), (5, 12), (5, 14), (5, 15), (5, 16), \\
(5, 17), (6, 10), (6, 11), (6, 12), (6, 14), (6, 15), (6, 16), (6, 17), (7, 8), (7, 11), (7, 13), (7, 14), (7, 16), (7, 17), \\
(8, 11), (8, 13), (8, 15), (8, 17), (9, 11), (9, 12), (10, 13), (10, 16) \}
$

\noindent
$
{\bf x} = \begin{bmatrix} 2 & -3 & 1 & -3 & 4 & -7 & 8 & 2 & -5 & -1 & -1 & 2 & 6 & 4 & 3 & -8 & -3 & 1 \end{bmatrix}
$

\vspace{\baselineskip}
\noindent
$\boldsymbol{n = 20, \rho = 7, |\Sigma| = 2}$

\noindent
$
E(G) = \{ (0, 9), (0, 10), (0, 11), (0, 12), (0, 13), (0, 14), (0, 15)^{\rC}, (1, 9), (1, 10), (1, 11), (1, 13), (1, 16), \\ 
(1, 17), (1, 18), (2, 10), (2, 12), (2, 13), (2, 14), (2, 15), (2, 17), (2, 19), (3, 10), (3, 14), (3, 15), (3, 16)^{\rC}, \\
(3, 17), (3, 18), (3, 19), (4, 11), (4, 12), (4, 13), (4, 14), (4, 15), (4, 16), (4, 17), (5, 11), (5, 12), (5, 13), \\
(5, 14), (5, 16), (5, 17), (5, 19), (6, 11), (6, 14), (6, 15), (6, 16), (6, 17), (6, 18), (6, 19), (7, 12), (7, 13), \\
(7, 14), (7, 15), (7, 16), (7, 18), (7, 19), (8, 12), (8, 13), (8, 15), (8, 16), (8, 17), (8, 18), (8, 19), (9, 10), \\
(9, 11), (9, 12), (9, 18), (9, 19), (10, 11), (10, 18) \}
$

\noindent
$
{\bf x} = \begin{bmatrix} 3 & -1 & 4 & -3 & -2 & -4 & 4 & -2 & 2 & -1 & 1 & -3 & 1 & -2 & 2 & -2 & 2 & 2 & 1 & -2 \end{bmatrix}
$

\vspace{\baselineskip}
\noindent
$\boldsymbol{n = 22, \rho = 7, |\Sigma| = 2}$

\noindent
$
E(G) = \{ (0, 10)^{\rC}, (0, 11), (0, 12), (0, 13), (0, 16), (0, 18), (0, 20), (1, 11), (1, 12), (1, 13), (1, 14), (1, 15), \\
(1, 16), (1, 17), (2, 11), (2, 12), (2, 13), (2, 14), (2, 16), (2, 18), (2, 19), (3, 11), (3, 12), (3, 15), (3, 16), \\
(3, 17), (3, 18), (3, 19), (4, 11), (4, 13), (4, 16), (4, 17), (4, 19), (4, 20), (4, 21), (5, 11), (5, 15), (5, 17), \\
(5, 18), (5, 19), (5, 20), (5, 21), (6, 12), (6, 13), (6, 14), (6, 15), (6, 17), (6, 19), (6, 20), (7, 12), (7, 13), \\
(7, 14), (7, 15), (7, 18), (7, 19), (7, 21), (8, 12), (8, 14), (8, 15), (8, 16), (8, 18), (8, 20), (8, 21), (9, 13), \\
(9, 14), (9, 15), (9, 17), (9, 18), (9, 20), (9, 21), (10, 14), (10, 16), (10, 17), (10, 19), (10, 20), (10, 21), \\
(11, 21)^{\rC} \}
$

\noindent
$
{\bf x} = \big[\begin{matrix} 5 & 8 & -1 & -7 & -5 & 10 & -4 & 3 & -4 & -6 & 4 & 2 & 19 & -6 & 1 & -19 & -11 & 14 & 2 & -7 \end{matrix} \\
\begin{matrix}  -2 & 10 \end{matrix}\big]
$

\vspace{\baselineskip}
\noindent
$\boldsymbol{n = 12, \rho = 8, |\Sigma| = 2}$

\noindent
$
E(G) = \{ (0, 3)^{\rC}, (0, 4), (0, 5), (0, 6), (0, 7), (0, 8), (0, 10), (0, 11), (1, 3), (1, 5)^{\rC}, (1, 6), (1, 7), (1, 8), (1, 9), \\
(1, 10), (1, 11), (2, 4), (2, 5), (2, 6), (2, 7), (2, 8), (2, 9), (2, 10), (2, 11), (3, 6), (3, 7), (3, 8), (3, 9), (3, 10), \\
(3, 11), (4, 6), (4, 7), (4, 8), (4, 9), (4, 10), (4, 11), (5, 7), (5, 8), (5, 9), (5, 10), (5, 11), (6, 9), (6, 10), (6, 11), \\
(7, 9), (7, 10), (8, 9), (8, 11) \}
$

\noindent
$
{\bf x} = \begin{bmatrix} 1 & 1 & -1 & -1 & 1 & -1 & -1 & 1 & 1 & 1 & -1 & -1 \end{bmatrix}
$

\vspace{\baselineskip}
\noindent
$\boldsymbol{n = 14, \rho = 8, |\Sigma| = 2}$

\noindent
$
E(G) = \{ (0, 5), (0, 6)^{\rC}, (0, 7), (0, 8), (0, 9), (0, 10), (0, 12), (0, 13), (1, 5), (1, 6), (1, 7), (1, 9), (1, 10), \\
(1, 11), (1, 12), (1, 13), (2, 5), (2, 6), (2, 8), (2, 9), (2, 10), (2, 11), (2, 12), (2, 13), (3, 5), (3, 7), (3, 8), (3, 9), \\
(3, 10), (3, 11), (3, 12), (3, 13), (4, 6)^{\rC}, (4, 7), (4, 8), (4, 9), (4, 10), (4, 11), (4, 12), (4, 13), (5, 8), (5, 9), \\
(5, 10), (5, 11), (6, 8), (6, 10), (6, 11), (6, 12), (7, 9), (7, 11), (7, 12), (7, 13), (8, 11), (8, 13), (9, 13), (10, 12) \}
$

\noindent
$
{\bf x} = \begin{bmatrix} 1 & -1 & -1 & 3 & -3 & -1 & 1 & 1 & 1 & -1 & -1 & -1 & 1 & 1 \end{bmatrix}
$

\vspace{\baselineskip}
\noindent
$\boldsymbol{n = 15, \rho = 8, |\Sigma| = 1}$

\noindent
$
E(G) = \{ (0, 5), (0, 7), (0, 8), (0, 9), (0, 10), (0, 11), (0, 12), (0, 14), (1, 6), (1, 7), (1, 8), (1, 9), (1, 10),\\
 (1, 11), (1, 12), (1, 13), (2, 6), (2, 7), (2, 9), (2, 10), (2, 11), (2, 12), (2, 13), (2, 14), (3, 6), (3, 8), (3, 9), \\
(3, 10), (3, 11), (3, 12), (3, 13), (3, 14), (4, 7), (4, 8), (4, 9), (4, 10), (4, 11), (4, 12), (4, 13)^{\rC}, (4, 14), (5, 8),\\
 (5, 9), (5, 10), (5, 11), (5, 12), (5, 13), (5, 14), (6, 7), (6, 8), (6, 12), (6, 13), (6, 14), (7, 12), (7, 13), (7, 14), \\
(8, 13), (8, 14), (9, 10), (9, 11), (10, 11) \}
$

\noindent
$
{\bf x} = \begin{bmatrix} 1 & -3 & -2 & 6 & -2 & 2 & 1 & -3 & -3 & -1 & -1 & -1 & 10 & -2 & -3 \end{bmatrix}
$

\vspace{\baselineskip}
\noindent
$\boldsymbol{n = 16, \rho = 8, |\Sigma| = 2}$

\noindent
$
E(G) = \{ (0, 6)^{\rC}, (0, 7), (0, 9), (0, 10), (0, 11), (0, 12), (0, 13), (0, 14), (1, 7), (1, 8), (1, 9), (1, 10), (1, 11), \\
(1, 12), (1, 14), (1, 15), (2, 7), (2, 8), (2, 9), (2, 10), (2, 11), (2, 13), (2, 14), (2, 15), (3, 7), (3, 8), (3, 10), \\ 
(3, 11), (3, 12), (3, 13), (3, 14), (3, 15)^{\rC}, (4, 7), (4, 9), (4, 10), (4, 11), (4, 12), (4, 13), (4, 14), (4, 15), (5, 8), \\
(5, 9), (5, 10), (5, 11), (5, 12), (5, 13), (5, 14), (5, 15), (6, 8), (6, 9), (6, 10), (6, 11), (6, 12), (6, 13), (6, 14), \\
(7, 12), (7, 13), (7, 15), (8, 13), (8, 14), (8, 15), (9, 12), (9, 15), (10, 11) \}
$

\noindent
$
{\bf x} = \begin{bmatrix} 1 & -1 & 1 & -1 & 3 & -3 & 1 & -1 & -1 & 1 & -1 & -1 & -1 & -1 & 5 & -1 \end{bmatrix}
$

\pagebreak
\noindent
$\boldsymbol{n = 17, \rho = 8, |\Sigma| = 1}$

\noindent
$
E(G) = \{ (0, 6), (0, 7), (0, 8), (0, 10), (0, 11), (0, 14), (0, 15), (0, 16), (1, 6), (1, 9), (1, 10), (1, 12), (1, 13), \\
(1, 14), (1, 15), (1, 16), (2, 7), (2, 8), (2, 10), (2, 11), (2, 12), (2, 13), (2, 15)^{\rC}, (2, 16), (3, 7), (3, 9), (3, 10), \\
(3, 11), (3, 12), (3, 13), (3, 14), (3, 15), (4, 8), (4, 9), (4, 10), (4, 11), (4, 12), (4, 14), (4, 15), (4, 16), (5, 8), \\
(5, 9), (5, 10), (5, 12), (5, 13), (5, 14), (5, 15), (5, 16), (6, 7), (6, 9), (6, 11), (6, 13), (6, 14), (6, 16), (7, 8), \\
(7, 11), (7, 12), (7, 13), (8, 11), (8, 13), (8, 14), (9, 12), (9, 13), (9, 15), (10, 15), (10, 16), (11, 14), (12, 16) \}
$

\noindent
$
{\bf x} = \begin{bmatrix} 3 & -3 & -2 & -3 & 1 & 1 & 1 & 3 & 1 & 2 & -3 & -1 & 2 & -1 & -4 & 2 & 1 \end{bmatrix}
$

\vspace{\baselineskip}
\noindent
$\boldsymbol{n = 18, \rho = 8, |\Sigma| = 2}$

\noindent
$
E(G) = \{ (0, 7)^{\rC}, (0, 8), (0, 9), (0, 13), (0, 14), (0, 15), (0, 16), (0, 17), (1, 8), (1, 9), (1, 11), (1, 12)^{\rC}, (1, 13),\\
 (1, 15), (1, 16), (1, 17), (2, 8), (2, 10), (2, 11), (2, 12), (2, 13), (2, 14), (2, 15), (2, 16), (3, 8), (3, 10), (3, 12), \\
(3, 13), (3, 14), (3, 15), (3, 16), (3, 17), (4, 9), (4, 10), (4, 11), (4, 12), (4, 14), (4, 15), (4, 16), (4, 17), (5, 9), \\
(5, 11), (5, 12), (5, 13), (5, 14), (5, 15), (5, 16), (5, 17), (6, 10), (6, 11), (6, 12), (6, 13), (6, 14), (6, 15), \\
(6, 16), (6, 17), (7, 9), (7, 10), (7, 11), (7, 13), (7, 14), (7, 16), (7, 17), (8, 10), (8, 11), (8, 12), (8, 13), (9, 10), \\
(9, 15), (9, 17), (10, 12), (11, 14) \}
$

\noindent
$
{\bf x} = \begin{bmatrix} 1 & -1 & 1 & -1 & -1 & -1 & 1 & 1 & -1 & 1 & 1 & -1 & -1 & 1 & 1 & 1 & -1 & -1 \end{bmatrix}
$

\vspace{\baselineskip}
\noindent
$\boldsymbol{n = 19, \rho = 8, |\Sigma| = 1}$

\noindent
$
E(G) = \{ (0, 8), (0, 9), (0, 12), (0, 13), (0, 14), (0, 15), (0, 16), (0, 17), (1, 8), (1, 10), (1, 11), (1, 12), (1, 13), \\
(1, 14), (1, 16), (1, 18), (2, 9), (2, 10), (2, 11), (2, 12), (2, 13), (2, 14), (2, 17), (2, 18), (3, 9), (3, 10), (3, 11), \\
(3, 13), (3, 15), (3, 16), (3, 17), (3, 18), (4, 9), (4, 10), (4, 11), (4, 14), (4, 15), (4, 16), (4, 17), (4, 18), (5, 9), \\
(5, 12), (5, 13), (5, 14), (5, 15), (5, 16), (5, 17), (5, 18), (6, 10), (6, 12), (6, 13), (6, 14), (6, 15), (6, 16)^{\rC}, \\
(6, 17), (6, 18), (7, 11), (7, 12), (7, 13), (7, 14), (7, 15), (7, 16), (7, 17), (7, 18), (8, 10), (8, 11), (8, 12),\\
 (8, 13), (8, 14), (8, 15), (9, 11), (9, 12), (9, 16), (10, 15), (10, 17), (11, 18) \}
$

\noindent
$
{\bf x} = \begin{bmatrix} 4 & -6 & 2 & 3 & 3 & 3 & 1 & -9 & 2 & 3 & -7 & 3 & -13 & 9 & 9 & 1 & -5 & -6 & 2 \end{bmatrix}
$

\vspace{\baselineskip}
\noindent
$\boldsymbol{n = 20, \rho = 8, |\Sigma| = 1}$

\noindent
$
E(G) = \{ (0, 9), (0, 10), (0, 12), (0, 14), (0, 16), (0, 17), (0, 18), (0, 19), (1, 9), (1, 10), (1, 14), (1, 15), \\
(1, 16), (1, 17), (1, 18)^{\rC}, (1, 19), (2, 9), (2, 11), (2, 12), (2, 13), (2, 15), (2, 16), (2, 17), (2, 19), (3, 10), \\
(3, 11), (3, 12), (3, 13), (3, 14), (3, 15), (3, 16), (3, 17), (4, 10), (4, 11), (4, 12), (4, 13), (4, 14), (4, 16), \\
(4, 17), (4, 18), (5, 10), (5, 11), (5, 13), (5, 15), (5, 16), (5, 17), (5, 18), (5, 19), (6, 10), (6, 13), (6, 14), \\
(6, 15), (6, 16), (6, 17), (6, 18), (6, 19), (7, 11), (7, 12), (7, 13), (7, 14), (7, 15), (7, 16), (7, 18), (7, 19), \\
(8, 11), (8, 12), (8, 13), (8, 14), (8, 15), (8, 17), (8, 18), (8, 19), (9, 11), (9, 12), (9, 15), (9, 18), (9, 19), \\
(10, 12), (10, 13), (11, 14) \}
$

\noindent
$
{\bf x} = \begin{bmatrix} 4 & -2 & -2 & 4 & -6 & 4 & -5 & 3 & 3 & -5 & -1 & -1 & 2 & -1 & -1 & -2 & 2 & 2 & -2 & 3 \end{bmatrix}
$

\vspace{\baselineskip}
\noindent
$\boldsymbol{n = 21, \rho = 8, |\Sigma| = 2}$

\noindent
$
E(G) = \{ (0, 9)^{\rC}, (0, 10), (0, 11), (0, 13), (0, 15), (0, 16), (0, 19), (0, 20), (1, 10), (1, 11)^{\rC}, (1, 12), (1, 15), \\
(1, 17), (1, 18), (1, 19), (1, 20), (2, 10), (2, 12), (2, 13), (2, 14), (2, 16), (2, 17), (2, 18), (2, 19), (3, 10), \\
(3, 12), (3, 13), (3, 14), (3, 16), (3, 17), (3, 19), (3, 20), (4, 11), (4, 12), (4, 13), (4, 14), (4, 15), (4, 16), \\ 
(4, 17), (4, 18), (5, 11), (5, 12), (5, 13), (5, 15), (5, 17), (5, 18), (5, 19), (5, 20), (6, 11), (6, 12), (6, 14), \\
(6, 15), (6, 16), (6, 18), (6, 19), (6, 20), (7, 11), (7, 12), (7, 14), (7, 16), (7, 17), (7, 18), (7, 19), (7, 20), \\
(8, 11), (8, 13), (8, 14), (8, 15), (8, 16), (8, 17), (8, 18), (8, 20), (9, 13), (9, 14), (9, 15), (9, 16), (9, 17), \\
(9, 18), (9, 19), (10, 12), (10, 13), (10, 14), (10, 15), (11, 20) \}
$

\noindent
$
{\bf x} = \big[\begin{matrix} 9 & -1 & -1 & -13 & 3 & -3 & -13 & 19 & -7 & 3 & 9 & 9 & -9 & -9 & 15 & 9 & -15 & 9 & -9 & 9 \end{matrix} \\
\begin{matrix} -9 \end{matrix}\big]
$

\vspace{\baselineskip}
\noindent
$\boldsymbol{n = 22, \rho = 8, |\Sigma| = 2}$

\noindent
$
E(G) = \{ (0, 10)^{\rC}, (0, 11), (0, 12), (0, 13), (0, 14), (0, 15), (0, 16), (0, 17), (1, 11), (1, 12), (1, 13), (1, 14), \\
(1, 15), (1, 17), (1, 18), (1, 19), (2, 11), (2, 12), (2, 13), (2, 15), (2, 16), (2, 17), (2, 19), (2, 20), (3, 11), \\
(3, 12), (3, 15), (3, 16), (3, 17), (3, 18), (3, 19), (3, 21), (4, 11), (4, 13), (4, 14), (4, 15), (4, 18), (4, 19),\\
 (4, 20), (4, 21), (5, 11), (5, 13), (5, 14), (5, 16), (5, 17), (5, 19), (5, 20), (5, 21), (6, 11), (6, 14), (6, 16), \\
(6, 17), (6, 18), (6, 19), (6, 20), (6, 21), (7, 12), (7, 13), (7, 14), (7, 15), (7, 16), (7, 18), (7, 19), (7, 20), \\
(8, 12), (8, 13), (8, 14)^{\rC}, (8, 15), (8, 17), (8, 18), (8, 20), (8, 21), (9, 12), (9, 13), (9, 14), (9, 16), (9, 17), \\
(9, 18), (9, 20), (9, 21), (10, 12), (10, 15), (10, 16), (10, 18), (10, 19), (10, 20), (10, 21), (11, 21) \}
$

\noindent
$
{\bf x} = \big[\begin{matrix} 1 & 1 & -3 & -1 & -1 & 3 & -1 & -1 & 1 & -1 & 3 & -3 & -1 & -3 & 3 & 5 & -1 & 3 & -3 & -1 & 1 \end{matrix} \\
\begin{matrix}  1 \end{matrix}\big]
$

\vspace{\baselineskip}
\noindent
$\boldsymbol{n = 23, \rho = 8, |\Sigma| = 2}$

\noindent
$
E(G) = \{ (0, 11), (0, 12), (0, 13), (0, 14), (0, 15), (0, 16), (0, 17)^{\rC}, (0, 18), (1, 11), (1, 12), (1, 13), (1, 14), \\
(1, 15), (1, 16), (1, 17), (1, 18)^{\rC}, (2, 11), (2, 12), (2, 13), (2, 14), (2, 16), (2, 17), (2, 18), (2, 19), (3, 11), \\
(3, 15), (3, 16), (3, 17), (3, 18), (3, 20), (3, 21), (3, 22), (4, 12), (4, 13), (4, 14), (4, 15), (4, 16), (4, 17), \\
(4, 20), (4, 21), (5, 12), (5, 13), (5, 14), (5, 18), (5, 19), (5, 20), (5, 21), (5, 22), (6, 12), (6, 13), (6, 15), \\
(6, 16), (6, 19), (6, 20), (6, 21), (6, 22), (7, 12), (7, 14), (7, 15), (7, 17), (7, 18), (7, 19), (7, 20), (7, 22), \\
(8, 12), (8, 14), (8, 15), (8, 17), (8, 18), (8, 19), (8, 21), (8, 22), (9, 13), (9, 14), (9, 15), (9, 16), (9, 19),\\
 (9, 20), (9, 21), (9, 22), (10, 13), (10, 16), (10, 17), (10, 18), (10, 19), (10, 20), (10, 21), (10, 22), (11, 19),\\
 (11, 20), (11, 21), (11, 22) \}
$

\noindent
$
{\bf x} = \big[\begin{matrix} 12 & 6 & -9 & -9 & 3 & -9 & -9 & 3 & 3 & -9 & 15 & 15 & -15 & 25 & -15 & 25 & -35 & 5 & 5 & 15 & 5 \end{matrix} \\
 \begin{matrix} 5 & -25 \end{matrix}\big]
$

\vspace{\baselineskip}
\noindent
$\boldsymbol{n = 24, \rho = 8, |\Sigma| = 2}$

\noindent
$
E(G) = \{ (0, 1), (0, 2), (0, 3), (0, 4), (0, 5), (0, 6), (0, 7), (0, 8), (1, 2), (1, 3), (1, 4), (1, 5), (1, 6), (1, 7), \\
(1, 8), (2, 3), (2, 4), (2, 5), (2, 6), (2, 7), (2, 8), (3, 4), (3, 5), (3, 6), (3, 7), (3, 8), (4, 5), (4, 6), (4, 7), (4, 8),\\
(5, 6), (5, 9)^{\rC}, (5, 10), (6, 9), (6, 11), (7, 8), (7, 9), (7, 10), (8, 9), (8, 11), (9, 10), (9, 11), (9, 12), (9, 13)^{\rC},\\
(10, 11), (10, 12), (10, 13), (10, 14), (10, 15), (11, 12), (11, 13), (11, 14), (11, 15), (12, 13), (12, 14), (12, 15),\\ 
(12, 16), (12, 17), (13, 14), (13, 16), (13, 17), (13, 18), (14, 15), (14, 19), (14, 20), (14, 21), (15, 16), (15, 19),\\
(15, 22), (15, 23), (16, 18), (16, 20), (16, 21), (16, 22), (16, 23), (17, 18), (17, 19), (17, 20), (17, 21), (17, 22),\\
(17, 23), (18, 19), (18, 20), (18, 21), (18, 22), (18, 23), (19, 20), (19, 21), (19, 22), (19, 23), (20, 21), (20, 22),\\
 (20, 23), (21, 22), (21, 23), (22, 23) \}
$

\noindent
$
{\bf x} = \big[\begin{matrix} 1 & 1 & 1 & 1 & 1 & -4 & -4 & -4 & 8 & -6 & -7 & 5 & 2 & 4 & -6 & 9 & -11 & 12 & -1 & 9 & -11 \end{matrix} \\
\begin{matrix} -11 & 4 & 4 \end{matrix}\big]
$

\vspace{\baselineskip}
\noindent
$\boldsymbol{n = 16, \rho = 9, |\Sigma| = 2}$

\noindent
$
E(G) = \{ (0, 1), (0, 2), (0, 3)^{\rC}, (0, 4), (0, 5), (0, 6), (0, 7), (0, 8), (0, 9), (1, 2), (1, 3), (1, 4), (1, 5), (1, 6),\\
(1, 7), (1, 8), (1, 9), (2, 3), (2, 4), (2, 5), (2, 6), (2, 7), (2, 8), (2, 9), (3, 4), (3, 5), (3, 6), (3, 7), (3, 10), (3, 11),\\
(4, 5), (4, 8), (4, 10), (4, 12), (4, 13), (5, 11), (5, 12), (5, 13), (5, 14), (6, 9), (6, 11), (6, 12), (6, 14), (6, 15),\\
(7, 10), (7, 11), (7, 12), (7, 13), (7, 15), (8, 9)^{\rC}, (8, 10), (8, 13), (8, 14), (8, 15), (9, 11), (9, 13), (9, 14),\\
(9, 15), (10, 11), (10, 12), (10, 13), (10, 14), (10, 15), (11, 12), (11, 14), (11, 15), (12, 13), (12, 14), (12, 15),\\
(13, 14), (13, 15), (14, 15) \}
$

\noindent
$
{\bf x} = \begin{bmatrix} 11 & -3 & -3 & -7 & -2 & 3 & 8 & 3 & 3 & -16 & -2 & 7 & 13 & -15 & -1 & -1 \end{bmatrix}
$

\vspace{\baselineskip}
\noindent
$\boldsymbol{n = 18, \rho = 9, |\Sigma| = 2}$

\noindent
$
E(G) = \{ (0, 1), (0, 2), (0, 3), (0, 4)^{\rC}, (0, 5), (0, 6), (0, 7), (0, 8), (0, 9), (1, 2), (1, 3), (1, 4), (1, 5), (1, 6),\\
(1, 7), (1, 8), (1, 9)^{\rC}, (2, 3), (2, 4), (2, 5), (2, 6), (2, 7), (2, 8), (2, 9), (3, 4), (3, 5), (3, 6), (3, 7), (3, 8), (3, 9),\\
(4, 5), (4, 6), (4, 7), (4, 8), (4, 10), (5, 6), (5, 11), (5, 12), (5, 13), (6, 14), (6, 15), (6, 16), (7, 10), (7, 11),\\
(7, 14), (7, 17), (8, 11), (8, 12), (8, 15), (8, 16), (9, 10), (9, 11), (9, 12), (9, 13), (9, 17), (10, 12), (10, 13),\\
(10, 14), (10, 15), (10, 16), (10, 17), (11, 13), (11, 14), (11, 15), (11, 16), (11, 17), (12, 13), (12, 14), (12, 15),\\
(12, 16), (12, 17), (13, 14), (13, 15), (13, 16), (13, 17), (14, 15), (14, 16), (14, 17), (15, 16), (15, 17), (16, 17) \}
$

\noindent
$
{\bf x} = \begin{bmatrix} 14 & 24 & -10 & -10 & -12 & 12 & -23 & 9 & 3 & -17 & 9 & -17 & 27 & 7 & -2 & -8 & -8 & 4 \end{bmatrix}
$

\pagebreak
\noindent
$\boldsymbol{n = 20, \rho = 9, |\Sigma| = 1}$

\noindent
$
E(G) = \{ (0, 1), (0, 2), (0, 3), (0, 4), (0, 5), (0, 6), (0, 7), (0, 8), (0, 9), (1, 2), (1, 3), (1, 4), (1, 5), (1, 6),\\
(1, 7), (1, 8), (1, 9), (2, 3), (2, 4), (2, 5), (2, 6), (2, 7), (2, 8), (2, 9), (3, 4), (3, 5), (3, 6), (3, 7), (3, 8), (3, 9),\\
(4, 5), (4, 6), (4, 7), (4, 8), (4, 9), (5, 6), (5, 7), (5, 8), (5, 10), (6, 10), (6, 11), (6, 12), (7, 11), (7, 13), (7, 14),\\
(8, 12), (8, 13), (8, 15), (9, 11), (9, 14), (9, 15), (9, 16), (10, 12), (10, 14), (10, 15), (10, 16), (10, 17)^{\rC},\\
(10, 18), (10, 19), (11, 12), (11, 13), (11, 15), (11, 17), (11, 18), (11, 19), (12, 13), (12, 16), (12, 17), (12, 18),\\
(12, 19), (13, 14), (13, 16), (13, 17), (13, 18), (13, 19), (14, 15), (14, 16), (14, 17), (14, 18), (14, 19), (15, 16),\\
(15, 17), (15, 18), (15, 19), (16, 17), (16, 18), (16, 19), (17, 18), (17, 19), (18, 19) \}
$

\noindent
$
{\bf x} = \begin{bmatrix} 1 & 1 & 1 & 1 & 1 & -3 & 2 & -2 & -4 & 3 & -1 & -5 & 4 & 2 & 1 & -8 & 7 & 1 & -1 & -1 \end{bmatrix}
$

\vspace{\baselineskip}
\noindent
$\boldsymbol{n = 22, \rho = 9, |\Sigma| = 2}$

\noindent
$
E(G) = \{ (0, 1), (0, 2), (0, 3), (0, 4), (0, 5), (0, 6), (0, 7), (0, 8), (0, 9), (1, 2), (1, 3), (1, 4), (1, 5), (1, 6),\\
(1, 7), (1, 8), (1, 9), (2, 3), (2, 4), (2, 5), (2, 6), (2, 7), (2, 8), (2, 9), (3, 4), (3, 5), (3, 6), (3, 7), (3, 8), (3, 9),\\
(4, 5), (4, 6), (4, 7), (4, 8), (4, 9), (5, 6), (5, 7), (5, 8), (5, 9), (6, 7), (6, 8), (6, 10), (7, 8), (7, 10), (8, 11),\\
(9, 11), (9, 12), (9, 13), (10, 12), (10, 13), (10, 14), (10, 15), (10, 16), (10, 17), (10, 18), (11, 12), (11, 14)^{\rC},\\
(11, 15), (11, 16), (11, 19), (11, 20), (11, 21), (12, 13), (12, 14), (12, 15), (12, 17), (12, 19)^{\rC}, (12, 20),\\
(13, 14), (13, 16), (13, 17), (13, 18), (13, 19), (13, 21), (14, 15), (14, 16), (14, 18), (14, 20), (14, 21), (15, 17),\\
(15, 18), (15, 19), (15, 20), (15, 21), (16, 17), (16, 18), (16, 19), (16, 20), (16, 21), (17, 18), (17, 19), (17, 20),\\
(17, 21), (18, 19), (18, 20), (18, 21), (19, 20), (19, 21), (20, 21) \}
$

\noindent
$
{\bf x} = \big[\begin{matrix} 4 & 4 & 4 & 4 & 4 & 4 & -3 & -3 & -23 & 9 & 2 & -18 & -3 & -3 & -24 & 29 & -36 & 32 & 11 & 18  \end{matrix} \\
\begin{matrix} -9 & -9 \end{matrix}\big]
$

\vspace{\baselineskip}
\noindent
$\boldsymbol{n = 24, \rho = 9, |\Sigma| = 3}$

\noindent
$
E(G) = \{ (0, 1)^{\rC}, (0, 2), (0, 3), (0, 4), (0, 5), (0, 6), (0, 7), (0, 8), (0, 9), (1, 2), (1, 3), (1, 4), (1, 5), (1, 6),\\
(1, 7), (1, 8), (1, 9), (2, 3), (2, 4), (2, 5), (2, 6), (2, 7), (2, 8), (2, 9), (3, 4), (3, 5), (3, 6), (3, 10), (3, 11),\\
(3, 12), (4, 5), (4, 6), (4, 10), (4, 11), (4, 13), (5, 14), (5, 15), (5, 16), (5, 17), (6, 18), (6, 19), (6, 20), (6, 21),\\
(7, 8), (7, 10), (7, 14), (7, 15), (7, 18)^{\rC}, (7, 19), (8, 10), (8, 14), (8, 15), (8, 18), (8, 19), (9, 10), (9, 11),\\
(9, 12), (9, 13), (9, 14), (9, 15), (10, 11), (10, 12), (10, 13), (10, 14), (11, 12), (11, 13), (11, 14), (11, 15),\\
(11, 16), (12, 13), (12, 14), (12, 15), (12, 16), (12, 22), (13, 17), (13, 20), (13, 22), (13, 23), (14, 21), (14, 22),\\
(15, 18), (15, 20), (15, 23), (16, 17), (16, 19), (16, 20), (16, 21), (16, 22), (16, 23), (17, 18), (17, 19), (17, 20),\\
(17, 21), (17, 22), (17, 23), (18, 20), (18, 21), (18, 22), (18, 23), (19, 20), (19, 21), (19, 22), (19, 23)^{\rC},\\
(20, 21), (20, 23), (21, 22), (21, 23), (22, 23) \}
$

\noindent
$
{\bf x} = \big[\begin{matrix} 2 & 2 & 6 & 7 & -6 & 5 & -15 & -28 & 30 & 3 & -10 & -10 & 26 & 13 & -35 & 6 & -4 & 22 & 29 & 28 \end{matrix} \\
\begin{matrix} -22 & -46 & 30 & -33 \end{matrix}\big]
$

\vspace{\baselineskip}
\noindent
$\boldsymbol{n = 26, \rho = 9, |\Sigma| = 2}$

\noindent
$
E(G) = \{ (0, 1), (0, 2), (0, 3), (0, 4)^{\rC}, (0, 5), (0, 6), (0, 7), (0, 8), (0, 9), (1, 2), (1, 3), (1, 4), (1, 5), (1, 6),\\
(1, 7), (1, 8), (1, 9), (2, 3), (2, 4), (2, 5), (2, 6), (2, 7), (2, 8), (2, 9), (3, 4), (3, 5), (3, 6), (3, 7), (3, 8), (3, 9),\\
(4, 5), (4, 6), (4, 7), (4, 8), (4, 10), (5, 6), (5, 7), (5, 11), (5, 12), (6, 11), (6, 13), (6, 14), (7, 15), (7, 16),\\
(7, 17), (8, 9), (8, 13), (8, 15), (8, 16), (9, 10), (9, 11), (9, 12), (9, 13), (10, 11), (10, 12), (10, 13), (10, 14),\\
(10, 15), (10, 16), (10, 17), (11, 12), (11, 13), (11, 14), (11, 15), (11, 16), (12, 13), (12, 14), (12, 15), (12, 16),\\
(12, 17), (13, 18), (13, 19), (13, 20), (14, 15), (14, 18), (14, 19), (14, 21), (14, 22), (15, 18), (15, 21), (15, 23),\\
(16, 17), (16, 20), (16, 24), (16, 25), (17, 20), (17, 22)^{\rC}, (17, 23), (17, 24), (17, 25), (18, 19), (18, 21),\\
(18, 22), (18, 23), (18, 24), (18, 25), (19, 20), (19, 21), (19, 22), (19, 23), (19, 24), (19, 25), (20, 21), (20, 22),\\
(20, 23), (20, 24), (20, 25), (21, 22), (21, 23), (21, 24), (21, 25), (22, 23), (22, 24), (22, 25), (23, 24), (23, 25),\\
(24, 25) \}
$

\noindent
$
{\bf x} = \big[\begin{matrix} 8 & -10 & -10 & -10 & -9 & -5 & 24 & 16 & 28 & -42 & -25 & 12 & -21 & 28 & -4 & 20 & 25 & -9 \end{matrix} \\
\begin{matrix} 11 & 2 & 11 & -6 & -17 & -11 & -6 & -6 \end{matrix}\big]
$

\pagebreak
\noindent
$\boldsymbol{n = 28, \rho = 9, |\Sigma| = 1}$

\noindent
$
E(G) = \{ (0, 1), (0, 2), (0, 3), (0, 4), (0, 5), (0, 6), (0, 7), (0, 8), (0, 9), (1, 2), (1, 3), (1, 4), (1, 5), (1, 6),\\
(1, 7), (1, 8), (1, 9), (2, 3), (2, 4), (2, 5), (2, 6), (2, 7), (2, 8), (2, 9), (3, 4), (3, 5), (3, 6), (3, 10), (3, 11),\\
(3, 12), (4, 5), (4, 6), (4, 10), (4, 11), (4, 13), (5, 10), (5, 12), (5, 13), (5, 14), (6, 12), (6, 15), (6, 16), (6, 17),\\
(7, 8), (7, 9), (7, 10), (7, 12), (7, 15), (7, 18), (8, 9), (8, 10), (8, 12), (8, 15), (8, 18), (9, 10), (9, 12), (9, 15),\\
(9, 18), (10, 11), (10, 12), (10, 13), (11, 12), (11, 13), (11, 14), (11, 15), (11, 16), (11, 17), (12, 13), (13, 14),\\
(13, 15), (13, 16), (13, 17), (14, 15), (14, 16), (14, 17), (14, 18), (14, 19), (14, 20), (15, 16), (15, 19), (16, 19),\\
(16, 21), (16, 22)^{\rC}, (16, 23), (17, 19), (17, 20), (17, 24), (17, 25), (17, 26), (18, 21), (18, 22), (18, 23),\\
(18, 24), (18, 27), (19, 20), (19, 21), (19, 25), (19, 26), (19, 27), (20, 22), (20, 23), (20, 24), (20, 25), (20, 26),\\
(20, 27), (21, 22), (21, 23), (21, 24), (21, 25), (21, 26), (21, 27), (22, 23), (22, 24), (22, 25), (22, 26), (22, 27),\\
(23, 24), (23, 25), (23, 26), (23, 27), (24, 25), (24, 26), (24, 27), (25, 26), (25, 27), (26, 27) \}
$

\noindent
$
{\bf x} = \big[\begin{matrix} 1 & 1 & 1 & -5 & 8 & -4 & -4 & 1 & 1 & 1 & 3 & 3 & -9 & 4 & -4 & 3 & -1 & 1 & -2 & -1 & -3 & 1 & 1 \end{matrix} \\
\begin{matrix} -1 & 1 & 2 & 2 & -1 \end{matrix}\big]
$

\vspace{\baselineskip}
\noindent
$\boldsymbol{n = 15, \rho = 10, |\Sigma| = 2}$

\noindent
$
E(G) = \{ (0, 1), (0, 2), (0, 3), (0, 4)^{\rC}, (0, 5), (0, 6), (0, 7), (0, 8), (0, 9), (0, 10), (1, 2), (1, 3), (1, 4), (1, 5),\\
(1, 6), (1, 7), (1, 8), (1, 9), (1, 11), (2, 3), (2, 4), (2, 5), (2, 6), (2, 7), (2, 8), (2, 10), (2, 12), (3, 4), (3, 5),\\
(3, 6), (3, 7), (3, 9), (3, 11), (3, 13), (4, 5), (4, 6), (4, 7), (4, 10), (4, 13), (4, 14)^{\rC}, (5, 6), (5, 9), (5, 11), (5, 12),\\
(5, 14), (6, 11), (6, 12), (6, 13), (6, 14), (7, 8), (7, 10), (7, 12), (7, 13), (7, 14), (8, 9), (8, 10), (8, 11), (8, 12),\\
(8, 13), (8, 14), (9, 10), (9, 11), (9, 12), (9, 13), (9, 14), (10, 11), (10, 12), (10, 13), (10, 14), (11, 12),\\
(11, 13), (11, 14), (12, 13), (12, 14), (13, 14) \}
$

\noindent
$
{\bf x} = \begin{bmatrix} 3 & -2 & -2 & 4 & -3 & -3 & 4 & 4 & -2 & -3 & -3 & -2 & -2 & 4 & 3 \end{bmatrix}
$

\vspace{\baselineskip}
\noindent
$\boldsymbol{n = 16, \rho = 10, |\Sigma| = 2}$

\noindent
$
E(G) = \{ (0, 1)^{\rC}, (0, 2), (0, 3), (0, 4), (0, 5)^{\rC}, (0, 6), (0, 7), (0, 8), (0, 9), (0, 10), (1, 2), (1, 3), (1, 4), (1, 5),\\
(1, 6), (1, 7), (1, 8), (1, 9), (1, 10), (2, 3), (2, 4), (2, 5), (2, 6), (2, 7), (2, 8), (2, 9), (2, 11), (3, 4), (3, 5), (3, 6),\\
(3, 7), (3, 8), (3, 10), (3, 11), (4, 5), (4, 6), (4, 11), (4, 12), (4, 13), (4, 14), (5, 7), (5, 12), (5, 13), (5, 14),\\
(5, 15), (6, 8), (6, 12), (6, 13), (6, 14), (6, 15), (7, 9), (7, 10), (7, 11), (7, 12), (7, 15), (8, 9), (8, 12), (8, 13),\\
(8, 14), (8, 15), (9, 10), (9, 11), (9, 13), (9, 14), (9, 15), (10, 11), (10, 12), (10, 13), (10, 14), (10, 15),\\
(11, 12), (11, 13), (11, 14), (11, 15), (12, 13), (12, 14), (12, 15), (13, 14), (13, 15), (14, 15) \}
$

\noindent
$
{\bf x} = \begin{bmatrix} 1 & -5 & 5 & -3 & -1 & 3 & -1 & 3 & 1 & 1 & -7 & 1 & 1 & -1 & -1 & 3 \end{bmatrix}
$

\vspace{\baselineskip}
\noindent
$\boldsymbol{n = 17, \rho = 10, |\Sigma| = 2}$

\noindent
$
E(G) = \{ (0, 1)^{\rC}, (0, 2), (0, 3), (0, 4), (0, 5), (0, 6), (0, 7), (0, 8), (0, 9), (0, 10), (1, 2), (1, 3), (1, 4), (1, 5),\\
(1, 6), (1, 7), (1, 8), (1, 9), (1, 10), (2, 3), (2, 4), (2, 5), (2, 6), (2, 7), (2, 8), (2, 9), (2, 10), (3, 4), (3, 5), (3, 6),\\
(3, 7), (3, 8), (3, 9), (3, 11), (4, 5), (4, 6), (4, 7), (4, 10), (4, 12), (4, 13), (5, 8), (5, 12)^{\rC}, (5, 13), (5, 14),\\
(5, 15), (6, 10), (6, 11), (6, 12), (6, 14), (6, 16), (7, 12), (7, 13), (7, 14), (7, 15), (7, 16), (8, 11), (8, 13),\\
(8, 14), (8, 15), (8, 16), (9, 11), (9, 12), (9, 13), (9, 14), (9, 15), (9, 16), (10, 11), (10, 12), (10, 14), (10, 15),\\
(10, 16), (11, 12), (11, 13), (11, 14), (11, 15), (11, 16), (12, 13), (12, 15), (12, 16), (13, 14), (13, 15), (13, 16),\\
(14, 15), (14, 16), (15, 16) \}
$

\noindent
$
{\bf x} = \begin{bmatrix} 1 & 1 & 3 & -2 & -3 & -1 & -2 & 2 & -1 & 3 & 2 & -3 & -1 & -3 & 1 & 2 & 1 \end{bmatrix}
$

\vspace{\baselineskip}
\noindent
$\boldsymbol{n = 18, \rho = 10, |\Sigma| = 2}$

\noindent
$
E(G) = \{ (0, 1), (0, 2), (0, 3), (0, 4), (0, 5)^{\rC}, (0, 6), (0, 7), (0, 8), (0, 9)^{\rC}, (0, 10), (1, 2), (1, 3), (1, 4), (1, 5),\\
(1, 6), (1, 7), (1, 8), (1, 9), (1, 10), (2, 3), (2, 4), (2, 5), (2, 6), (2, 7), (2, 8), (2, 9), (2, 10), (3, 4), (3, 5), (3, 6),\\
(3, 7), (3, 8), (3, 9), (3, 10), (4, 5), (4, 6), (4, 7), (4, 8), (4, 11), (4, 12), (5, 6), (5, 9), (5, 13), (5, 14), (5, 15),\\
(6, 10), (6, 13), (6, 16), (6, 17), (7, 11), (7, 12), (7, 13), (7, 14), (7, 15), (8, 13), (8, 14), (8, 15), (8, 16),\\
(8, 17), (9, 11), (9, 12), (9, 14), (9, 16), (9, 17), (10, 11), (10, 12), (10, 15), (10, 16), (10, 17), (11, 12),\\
(11, 13), (11, 14), (11, 15), (11, 16), (11, 17), (12, 13), (12, 14), (12, 15), (12, 16), (12, 17), (13, 14), (13, 15),\\
(13, 16), (13, 17), (14, 15), (14, 16), (14, 17), (15, 16), (15, 17), (16, 17) \}
$

\noindent
$
{\bf x} = \begin{bmatrix} 1 & -3 & -3 & -3 & 13 & -5 & 7 & -4 & 2 & 3 & -11 & 4 & 4 & 3 & -1 & -15 & 4 & 4 \end{bmatrix}
$

\vspace{\baselineskip}
\noindent
$\boldsymbol{n = 19, \rho = 10, |\Sigma| = 2}$

\noindent
$
E(G) = \{ (0, 1), (0, 2), (0, 3), (0, 4), (0, 5), (0, 6), (0, 7), (0, 8), (0, 9), (0, 10), (1, 2), (1, 3), (1, 4), (1, 5),\\
(1, 6), (1, 7), (1, 8), (1, 9), (1, 10), (2, 3), (2, 4), (2, 5), (2, 6), (2, 7), (2, 8), (2, 9), (2, 10), (3, 4), (3, 5), (3, 6),\\
(3, 7), (3, 8), (3, 9), (3, 10), (4, 5)^{\rC}, (4, 6), (4, 7), (4, 8), (4, 9), (4, 11), (5, 6), (5, 7), (5, 10), (5, 11), (5, 12),\\
(6, 10), (6, 13), (6, 14), (6, 15), (7, 12), (7, 13), (7, 16), (7, 17), (8, 11), (8, 14), (8, 16), (8, 17), (8, 18),\\
(9, 13), (9, 15), (9, 16), (9, 17), (9, 18), (10, 12), (10, 14), (10, 15), (10, 18), (11, 12), (11, 13), (11, 14),\\
(11, 15), (11, 16), (11, 17), (11, 18), (12, 13), (12, 14), (12, 15), (12, 16), (12, 17), (12, 18), (13, 14), (13, 15),\\
(13, 16), (13, 17), (13, 18), (14, 15), (14, 16), (14, 17), (14, 18), (15, 16), (15, 17), (15, 18)^{\rC}, (16, 17),\\
(16, 18), (17, 18) \}
$

\noindent
$
{\bf x} = \begin{bmatrix} 1 & 1 & 1 & 1 & -5 & 3 & -6 & -4 & 4 & 4 & 1 & 1 & -1 & -7 & -2 & 6 & 3 & 3 & -4 \end{bmatrix}
$

\vspace{\baselineskip}
\noindent
$\boldsymbol{n = 20, \rho = 10, |\Sigma| = 2}$

\noindent
$
E(G) = \{ (0, 1), (0, 2), (0, 3), (0, 4), (0, 5)^{\rC}, (0, 6), (0, 7), (0, 8), (0, 9), (0, 10), (1, 2), (1, 3), (1, 4), (1, 5),\\
(1, 6), (1, 7), (1, 8), (1, 9), (1, 10), (2, 3), (2, 4), (2, 5), (2, 6), (2, 7), (2, 8), (2, 9), (2, 10), (3, 4), (3, 5), (3, 6),\\
(3, 7), (3, 8), (3, 9), (3, 10), (4, 5), (4, 6), (4, 7), (4, 8), (4, 9), (4, 10), (5, 6), (5, 7), (5, 8), (5, 11), (5, 12),\\
(6, 7), (6, 9), (6, 11), (6, 13), (7, 14), (7, 15), (7, 16)^{\rC}, (8, 11), (8, 13), (8, 17), (8, 18), (9, 12), (9, 14), (9, 15),\\
(9, 19), (10, 13), (10, 16), (10, 17), (10, 18), (10, 19), (11, 12), (11, 14), (11, 15), (11, 16), (11, 17), (11, 18),\\
(11, 19), (12, 13), (12, 14), (12, 15), (12, 16), (12, 17), (12, 18), (12, 19), (13, 14), (13, 15), (13, 16), (13, 17),\\
(13, 18), (13, 19), (14, 15), (14, 16), (14, 17), (14, 18), (14, 19), (15, 16), (15, 17), (15, 18), (15, 19), (16, 17),\\
(16, 18), (16, 19), (17, 18), (17, 19), (18, 19) \}
$

\noindent
$
{\bf x} = \begin{bmatrix} 1 & -1 & -1 & -1 & -1 & -1 & 7 & -1 & -1 & -1 & -1 & 1 & -1 & 5 & -1 & -1 & 1 & -1 & -1 & -1 \end{bmatrix}
$

\vspace{\baselineskip}
\noindent
$\boldsymbol{n = 21, \rho = 10, |\Sigma| = 2}$

\noindent
$
E(G) = \{ (0, 1), (0, 2), (0, 3), (0, 4), (0, 5), (0, 6), (0, 7), (0, 8), (0, 9), (0, 10), (1, 2), (1, 3), (1, 4), (1, 5),\\
(1, 6), (1, 7), (1, 8), (1, 9), (1, 10), (2, 3), (2, 4), (2, 5), (2, 6), (2, 7), (2, 8), (2, 9), (2, 10), (3, 4), (3, 5), (3, 6),\\
(3, 7), (3, 8), (3, 9), (3, 10), (4, 5), (4, 6), (4, 7), (4, 8), (4, 9), (4, 10), (5, 6)^{\rC}, (5, 7), (5, 8), (5, 9), (5, 11),\\
(6, 7), (6, 8), (6, 10), (6, 12)^{\rC}, (7, 11), (7, 13), (7, 14), (8, 15), (8, 16), (8, 17), (9, 13), (9, 14), (9, 18), (9, 19),\\
(10, 12), (10, 13), (10, 15), (10, 20), (11, 12), (11, 14), (11, 15), (11, 16), (11, 17), (11, 18), (11, 19), (11, 20),\\
(12, 13), (12, 15), (12, 16), (12, 17), (12, 18), (12, 19), (12, 20), (13, 14), (13, 16), (13, 17), (13, 18), (13, 19),\\
(13, 20), (14, 15), (14, 16), (14, 17), (14, 18), (14, 19), (14, 20), (15, 16), (15, 17), (15, 18), (15, 19), (15, 20),\\
(16, 17), (16, 18), (16, 19), (16, 20), (17, 18), (17, 19), (17, 20), (18, 19), (18, 20), (19, 20) \}
$

\noindent
$
{\bf x} = \begin{bmatrix} 1 & 1 & 1 & 1 & 1 & -3 & 5 & -9 & -11 & 9 & 5 & 11 & -7 & -24 & 6 & 17 & -12 & -12 & 8 & 8 & 4 \end{bmatrix}
$

\vspace{\baselineskip}
\noindent
$\boldsymbol{n = 22, \rho = 10, |\Sigma| = 3}$

\noindent
$
E(G) = \{ (0, 1)^{\rC}, (0, 2), (0, 3), (0, 4), (0, 5), (0, 6), (0, 7), (0, 8), (0, 9), (0, 10), (1, 2), (1, 3), (1, 4), (1, 5),\\
(1, 6), (1, 7), (1, 8), (1, 9), (1, 10), (2, 3), (2, 4), (2, 5), (2, 6), (2, 7), (2, 8), (2, 9)^{\rC}, (2, 10), (3, 4), (3, 5),\\
(3, 6), (3, 7), (3, 8), (3, 9), (3, 10), (4, 5), (4, 6), (4, 7), (4, 8), (4, 9), (4, 10), (5, 6), (5, 7), (5, 8), (5, 9),\\
(5, 10), (6, 7), (6, 8), (6, 9), (6, 11), (7, 8), (7, 11), (7, 12), (8, 11), (8, 13), (9, 14), (9, 15), (9, 16), (10, 11),\\
(10, 12), (10, 14), (10, 17)^{\rC}, (11, 13), (11, 15), (11, 18), (11, 19), (11, 20), (11, 21), (12, 13), (12, 14),\\
(12, 16), (12, 17), (12, 18), (12, 19), (12, 20), (12, 21), (13, 15), (13, 16), (13, 17), (13, 18), (13, 19), (13, 20),\\
(13, 21), (14, 15), (14, 16), (14, 17), (14, 18), (14, 19), (14, 20), (14, 21), (15, 16), (15, 17), (15, 18), (15, 19),\\
(15, 20), (15, 21), (16, 17), (16, 18), (16, 19), (16, 20), (16, 21), (17, 18), (17, 19), (17, 20), (17, 21), (18, 19),\\
(18, 20), (18, 21), (19, 20), (19, 21), (20, 21) \}
$

\noindent
$
{\bf x} = \begin{bmatrix} 1 & 1 & 17 & 3 & 3 & 3 & -7 & -17 & 3 & -7 & 3 & -7 & -17 & 3 & 1 & 11 & 1 & 5 & 1 & 1 & 1 & 1 \end{bmatrix}
$

\vspace{\baselineskip}
\noindent
$\boldsymbol{n = 23, \rho = 10, |\Sigma| = 2}$

\noindent
$
E(G) = \{ (0, 1), (0, 2), (0, 3), (0, 4), (0, 5), (0, 6), (0, 7), (0, 8)^{\rC}, (0, 9), (0, 10), (1, 2), (1, 3), (1, 4), (1, 5),\\
(1, 6), (1, 7), (1, 8), (1, 9), (1, 10), (2, 3), (2, 4), (2, 5), (2, 6), (2, 7), (2, 8), (2, 9), (2, 10), (3, 4), (3, 5), (3, 6),\\
(3, 7), (3, 8), (3, 9), (3, 10), (4, 5), (4, 6), (4, 7), (4, 8), (4, 9), (4, 10), (5, 6), (5, 7), (5, 11), (5, 12), (5, 13),\\
(6, 8), (6, 11), (6, 12), (6, 14), (7, 11), (7, 15), (7, 16), (7, 17), (8, 13), (8, 18)^{\rC}, (8, 19), (8, 20), (9, 10),\\
(9, 11), (9, 12), (9, 13), (9, 14), (10, 11), (10, 12), (10, 13), (10, 15), (11, 15), (11, 16), (11, 18), (11, 19),\\
(11, 21), (12, 13), (12, 16), (12, 17), (12, 18), (12, 19), (12, 22), (13, 14), (13, 18), (13, 20), (13, 21), (13, 22),\\
(14, 15), (14, 16), (14, 17), (14, 19), (14, 20), (14, 21), (14, 22), (15, 17), (15, 18), (15, 19), (15, 20), (15, 21),\\
(15, 22), (16, 17), (16, 18), (16, 19), (16, 20), (16, 21), (16, 22), (17, 18), (17, 19), (17, 20), (17, 21), (17, 22),\\
(18, 20), (18, 21), (18, 22), (19, 20), (19, 21), (19, 22), (20, 21), (20, 22), (21, 22) \}
$

\noindent
$
{\bf x} = \big[\begin{matrix} 3 & -1 & -1 & -1 & -1 & 3 & 12 & -7 & -2 & -12 & 6 & -5 & 6 & -5 & -1 & 17 & -18 & 4 & 1 & 3 \end{matrix} \\
\begin{matrix} -2 & -5 & 6 \end{matrix}\big]
$

\vspace{\baselineskip}
\noindent
$\boldsymbol{n = 24, \rho = 10, |\Sigma| = 2}$

\noindent
$
E(G) = \{ (0, 1), (0, 2), (0, 3), (0, 4), (0, 5), (0, 6), (0, 7), (0, 8), (0, 9), (0, 10), (1, 2), (1, 3), (1, 4), (1, 5),\\
(1, 6), (1, 7), (1, 8), (1, 9), (1, 10), (2, 3), (2, 4), (2, 5), (2, 6), (2, 7), (2, 8), (2, 9), (2, 10), (3, 4), (3, 5), (3, 6),\\
(3, 7), (3, 8), (3, 9), (3, 10), (4, 5), (4, 6), (4, 7), (4, 8), (4, 9), (4, 10), (5, 6), (5, 7), (5, 11), (5, 12), (5, 13),\\
(6, 8), (6, 11), (6, 12), (6, 14), (7, 11), (7, 15), (7, 16), (7, 17), (8, 12), (8, 18), (8, 19), (8, 20), (9, 10), (9, 11),\\
(9, 12), (9, 13), (9, 14), (10, 11), (10, 12), (10, 13), (10, 14), (11, 12), (11, 13), (11, 14), (11, 15), (11, 18),\\
(12, 15), (12, 16), (12, 19), (12, 21), (13, 17), (13, 18), (13, 20), (13, 21), (13, 22)^{\rC}, (13, 23), (14, 15),\\
(14, 17), (14, 19), (14, 20), (14, 22)^{\rC}, (14, 23), (15, 16), (15, 19), (15, 20), (15, 21), (15, 22), (15, 23),\\
(16, 17), (16, 18), (16, 19), (16, 20), (16, 21), (16, 22), (16, 23), (17, 18), (17, 19), (17, 20), (17, 21), (17, 22),\\
(17, 23), (18, 19), (18, 20), (18, 21), (18, 22), (18, 23), (19, 21), (19, 22), (19, 23), (20, 21), (20, 22), (20, 23),\\
(21, 22), (21, 23), (22, 23) \}
$

\noindent
$
{\bf x} = \big[\begin{matrix} 1 & 1 & 1 & 1 & 1 & 9 & 3 & -5 & -5 & -3 & -3 & -17 & 7 & 7 & 1 & 1 & 1 & 1 & -17 & 1 & 1 & 13 & -9 \end{matrix} \\
\begin{matrix} 7 \end{matrix}\big]
$

\vspace{\baselineskip}
\noindent
$\boldsymbol{n = 25, \rho = 10, |\Sigma| = 2}$

\noindent
$
E(G) = \{ (0, 1), (0, 2), (0, 3), (0, 4), (0, 5), (0, 6), (0, 7), (0, 8), (0, 9), (0, 10), (1, 2), (1, 3), (1, 4), (1, 5),\\
(1, 6), (1, 7), (1, 8), (1, 9), (1, 10), (2, 3), (2, 4), (2, 5), (2, 6), (2, 7), (2, 8), (2, 9), (2, 10), (3, 4), (3, 5), (3, 6),\\
(3, 7), (3, 8), (3, 9), (3, 10), (4, 5), (4, 6), (4, 7), (4, 8), (4, 9), (4, 10), (5, 6), (5, 7)^{\rC}, (5, 11), (5, 12), (5, 13),\\
(6, 8), (6, 11), (6, 12), (6, 14), (7, 11)^{\rC}, (7, 15), (7, 16), (7, 17), (8, 12), (8, 15), (8, 16), (8, 18), (9, 10),\\
(9, 11), (9, 12), (9, 13), (9, 14), (10, 11), (10, 12), (10, 13), (10, 14), (11, 12), (11, 13), (11, 14), (11, 15),\\
(11, 16), (12, 13), (12, 15), (12, 17), (12, 19), (13, 17), (13, 19), (13, 20), (13, 21), (13, 22), (14, 19), (14, 20),\\
(14, 21), (14, 22), (14, 23), (14, 24), (15, 17), (15, 18), (15, 20), (15, 21), (15, 23), (15, 24), (16, 17), (16, 18),\\
(16, 19), (16, 20), (16, 22), (16, 23), (16, 24), (17, 18), (17, 21), (17, 22), (17, 23), (17, 24), (18, 19), (18, 20),\\
(18, 21), (18, 22), (18, 23), (18, 24), (19, 20), (19, 21), (19, 22), (19, 23), (19, 24), (20, 21), (20, 22), (20, 23),\\
(20, 24), (21, 22), (21, 23), (21, 24), (22, 23), (22, 24), (23, 24) \}
$

\noindent
$
{\bf x} = \big[\begin{matrix} 5 & 5 & 5 & 5 & 5 & 23 & -35 & -23 & -9 & 12 & 12 & 23 & -38 & 2 & -24 & 13 & 12 & -4 & 23 & -37 \end{matrix} \\
\begin{matrix} 14 & -2 & -3 & 8 & 8 \end{matrix}\big]
$

\vspace{\baselineskip}
\noindent
$\boldsymbol{n = 26, \rho = 10, |\Sigma| = 3}$

\noindent
$
E(G) = \{ (0, 1)^{\rC}, (0, 2), (0, 3), (0, 4), (0, 5), (0, 6), (0, 7), (0, 8), (0, 9), (0, 10), (1, 2), (1, 3), (1, 4), (1, 5),\\
(1, 6), (1, 7), (1, 8), (1, 9), (1, 10), (2, 3)^{\rC}, (2, 4), (2, 5), (2, 6), (2, 7), (2, 8), (2, 9), (2, 10), (3, 4), (3, 5),\\
(3, 6), (3, 7), (3, 8), (3, 9), (3, 10), (4, 5), (4, 6), (4, 7), (4, 8), (4, 9), (4, 10), (5, 6), (5, 7), (5, 11), (5, 12),\\
(5, 13), (6, 8), (6, 11), (6, 12), (6, 14), (7, 11), (7, 15), (7, 16), (7, 17), (8, 12), (8, 15), (8, 16), (8, 18), (9, 10),\\
(9, 11), (9, 12), (9, 13), (9, 14), (10, 11), (10, 12), (10, 13), (10, 14), (11, 12), (11, 13), (11, 14), (11, 15),\\
(11, 16), (12, 13), (12, 14), (12, 15), (12, 18), (13, 15), (13, 16), (13, 19), (13, 20), (13, 21), (14, 17), (14, 19),\\
(14, 20), (14, 21), (14, 22), (15, 16), (15, 22), (15, 23), (15, 24), (15, 25), (16, 17), (16, 19), (16, 23), (16, 24),\\
(16, 25), (17, 18), (17, 20), (17, 21), (17, 22), (17, 23), (17, 24), (17, 25), (18, 19), (18, 20), (18, 21), (18, 22),\\
(18, 23), (18, 24), (18, 25), (19, 20)^{\rC}, (19, 21), (19, 22), (19, 23), (19, 24), (19, 25), (20, 21), (20, 22),\\
(20, 23), (20, 24), (20, 25), (21, 22), (21, 23), (21, 24), (21, 25), (22, 23), (22, 24), (22, 25), (23, 24), (23, 25),\\
(24, 25) \}
$

\noindent
$
{\bf x} = \big[\begin{matrix} 1 & 1 & 1 & 1 & 3 & 5 & 1 & -1 & -11 & 1 & 1 & 1 & -1 & -7 & -1 & 19 & -17 & -15 & -9 & -1 & -3 \end{matrix} \\
\begin{matrix} -5 & 21 & 5 & 5 & 5 \end{matrix}\big]
$

\pagebreak
\noindent
$\boldsymbol{n = 27, \rho = 10, |\Sigma| = 2}$

\noindent
$
E(G) = \{ (0, 1), (0, 2), (0, 3), (0, 4), (0, 5), (0, 6), (0, 7), (0, 8), (0, 9), (0, 10), (1, 2), (1, 3), (1, 4), (1, 5),\\
(1, 6), (1, 7), (1, 8), (1, 9), (1, 10), (2, 3), (2, 4), (2, 5), (2, 6), (2, 7), (2, 8), (2, 9), (2, 10), (3, 4), (3, 5), (3, 6),\\
(3, 7), (3, 8), (3, 9), (3, 10), (4, 5), (4, 6), (4, 7), (4, 8), (4, 9), (4, 10), (5, 6), (5, 7), (5, 11), (5, 12), (5, 13),\\
(6, 8), (6, 11), (6, 12)^{\rC}, (6, 14), (7, 11), (7, 15), (7, 16)^{\rC}, (7, 17), (8, 12), (8, 15), (8, 16), (8, 18), (9, 10),\\
(9, 11), (9, 12), (9, 13), (9, 14), (10, 11), (10, 12), (10, 13), (10, 14), (11, 12), (11, 13), (11, 14), (11, 15),\\
(11, 16), (12, 13), (12, 14), (12, 15), (12, 16), (13, 14), (13, 15), (13, 16), (13, 19), (13, 20), (14, 15), (14, 21),\\
(14, 22), (14, 23), (15, 19), (15, 20), (15, 21), (15, 24), (16, 17), (16, 18), (16, 24), (16, 25), (16, 26), (17, 18),\\
(17, 19), (17, 20), (17, 21), (17, 22), (17, 23), (17, 25), (17, 26), (18, 19), (18, 21), (18, 22), (18, 23), (18, 24),\\
(18, 25), (18, 26), (19, 20), (19, 22), (19, 23), (19, 24), (19, 25), (19, 26), (20, 21), (20, 22), (20, 23), (20, 24),\\
(20, 25), (20, 26), (21, 22), (21, 23), (21, 24), (21, 25), (21, 26), (22, 23), (22, 24), (22, 25), (22, 26), (23, 24),\\
(23, 25), (23, 26), (24, 25), (24, 26), (25, 26) \}
$

\noindent
$
{\bf x} = \big[\begin{matrix} 5 & 5 & 5 & 5 & 5 & 39 & -19 & -2 & -44 & 3 & 3 & 12 & 8 & -24 & -24 & -2 & 18 & -56 & -30 & -69 \end{matrix} \\
\begin{matrix} 12 & 51 & -16 & -16 & 80 & 26 & 26 \end{matrix}\big]
$

\vspace{\baselineskip}
\noindent
$\boldsymbol{n = 28, \rho = 10, |\Sigma| = 2}$

\noindent
$
E(G) = \{ (0, 1), (0, 2), (0, 3), (0, 4), (0, 5), (0, 6), (0, 7), (0, 8), (0, 9)^{\rC}, (0, 10), (1, 2), (1, 3), (1, 4), (1, 5),\\
(1, 6), (1, 7), (1, 8), (1, 9), (1, 10), (2, 3), (2, 4), (2, 5), (2, 6), (2, 7), (2, 8), (2, 9), (2, 10), (3, 4), (3, 5), (3, 6),\\
(3, 7), (3, 8), (3, 9), (3, 10), (4, 5), (4, 6), (4, 7), (4, 8), (4, 9), (4, 10), (5, 6), (5, 7), (5, 8), (5, 9), (5, 10),\\
(6, 7), (6, 8), (6, 11), (6, 12), (7, 11), (7, 12), (7, 13), (8, 11), (8, 13), (8, 14), (9, 11), (9, 14), (9, 15), (9, 16),\\
(10, 12), (10, 14), (10, 17), (10, 18), (11, 12), (11, 13), (11, 14), (11, 15), (11, 16), (11, 17), (12, 13), (12, 14),\\
(12, 15), (12, 16), (12, 17), (12, 18), (13, 14), (13, 15), (13, 16), (13, 17), (13, 18), (13, 19), (14, 15), (14, 16),\\
(14, 17), (14, 19), (15, 17), (15, 18), (15, 20), (15, 21), (15, 22), (16, 20), (16, 21), (16, 23), (16, 24), (16, 25),\\
(17, 22), (17, 23), (17, 26), (17, 27), (18, 20), (18, 22), (18, 24), (18, 25), (18, 26), (18, 27), (19, 20), (19, 21),\\
(19, 22), (19, 23), (19, 24), (19, 25), (19, 26), (19, 27), (20, 21), (20, 23)^{\rC}, (20, 24), (20, 25), (20, 26),\\
(20, 27), (21, 22), (21, 23), (21, 24), (21, 25), (21, 26), (21, 27), (22, 23), (22, 24), (22, 25), (22, 26), (22, 27),\\
(23, 24), (23, 25), (23, 26), (23, 27), (24, 25), (24, 26), (24, 27), (25, 26), (25, 27), (26, 27) \}
$

\noindent
$
{\bf x} = \big[\begin{matrix} 1 & -1 & -1 & -1 & -1 & -1 & 5 & -11 & 11 & -1 & -1 & 5 & -1 & -5 & -1 & -1 & 3 & 1 & 5 & -11  \end{matrix} \\
\begin{matrix} -1 & -3 & 1 & 1 & 3 & 3 & 1 & 1 \end{matrix}\big]
$

\vspace{\baselineskip}
\noindent
$\boldsymbol{n = 29, \rho = 10, |\Sigma| = 1}$

\noindent
$
E(G) = \{ (0, 1), (0, 2), (0, 3), (0, 4), (0, 5), (0, 6), (0, 7), (0, 8), (0, 9), (0, 10), (1, 2), (1, 3), (1, 4), (1, 5),\\
(1, 6), (1, 7), (1, 8), (1, 9), (1, 10), (2, 3), (2, 4), (2, 5), (2, 6), (2, 7), (2, 8), (2, 9), (2, 10), (3, 4), (3, 5), (3, 6),\\
(3, 7), (3, 8), (3, 9), (3, 10), (4, 5), (4, 6), (4, 7), (4, 8), (4, 9), (4, 10), (5, 6), (5, 7), (5, 8), (5, 9), (5, 10),\\
(6, 7), (6, 8), (6, 11), (6, 12), (7, 11), (7, 12), (7, 13), (8, 11), (8, 13), (8, 14), (9, 11), (9, 14), (9, 15), (9, 16),\\
(10, 12), (10, 14), (10, 17), (10, 18), (11, 12), (11, 13), (11, 14), (11, 15), (11, 16), (11, 17), (12, 13), (12, 14),\\
(12, 15), (12, 16), (12, 17), (12, 18), (13, 14), (13, 15), (13, 16), (13, 17), (13, 18), (13, 19), (14, 15), (14, 16),\\
(14, 17), (14, 18), (15, 16), (15, 17), (15, 19), (15, 20), (15, 21)^{\rC}, (16, 22), (16, 23), (16, 24), (16, 25),\\
(17, 20), (17, 22), (17, 26), (17, 27), (18, 20), (18, 23), (18, 24), (18, 26), (18, 27), (18, 28), (19, 20), (19, 21),\\
(19, 22), (19, 23), (19, 25), (19, 26), (19, 27), (19, 28), (20, 21), (20, 22), (20, 23), (20, 24), (20, 25), (20, 28),\\
(21, 22), (21, 23), (21, 24), (21, 25), (21, 26), (21, 27), (21, 28), (22, 24), (22, 25), (22, 26), (22, 27), (22, 28),\\
(23, 24), (23, 25), (23, 26), (23, 27), (23, 28), (24, 25), (24, 26), (24, 27), (24, 28), (25, 26), (25, 27), (25, 28),\\
(26, 27), (26, 28), (27, 28) \}
$

\noindent
$
{\bf x} = \big[\begin{matrix} 1 & 1 & 1 & 1 & 1 & 1 & -13 & 1 & 3 & -15 & 19 & -16 & 6 & 17 & 6 & -11 & 15 & -9 & -9 & 14 & -7  \end{matrix} \\
\begin{matrix} 11 & 8 & -2 & -8 & 15 & -11 & -11 & -9  \end{matrix}\big]
$

\vspace{\baselineskip}
\noindent
$\boldsymbol{n = 30, \rho = 10, |\Sigma| = 1}$

\noindent
$
E(G) = \{ (0, 1), (0, 2), (0, 3), (0, 4), (0, 5), (0, 6), (0, 7), (0, 8), (0, 9), (0, 10)^{\rC}, (1, 2), (1, 3), (1, 4), (1, 5),\\
(1, 6), (1, 7), (1, 8), (1, 9), (1, 10), (2, 3), (2, 4), (2, 5), (2, 6), (2, 7), (2, 8), (2, 9), (2, 10), (3, 4), (3, 5), (3, 6),\\
(3, 7), (3, 8), (3, 9), (3, 10), (4, 5), (4, 6), (4, 7), (4, 8), (4, 9), (4, 10), (5, 6), (5, 7), (5, 8), (5, 9), (5, 10),\\
(6, 7), (6, 8), (6, 11), (6, 12), (7, 11), (7, 12), (7, 13), (8, 11), (8, 13), (8, 14), (9, 11), (9, 14), (9, 15), (9, 16),\\
(10, 12), (10, 14), (10, 17), (10, 18), (11, 12), (11, 13), (11, 14), (11, 15), (11, 16), (11, 17), (12, 13), (12, 14),\\
(12, 15), (12, 16), (12, 17), (12, 18), (13, 14), (13, 15), (13, 16), (13, 17), (13, 18), (13, 19), (14, 15), (14, 16),\\
(14, 17), (14, 18), (15, 16), (15, 17), (15, 18), (15, 19), (15, 20), (16, 17), (16, 18), (16, 19), (16, 21), (17, 19),\\
(17, 20), (17, 22), (18, 21), (18, 22), (18, 23), (18, 24), (19, 23), (19, 25), (19, 26), (19, 27), (19, 28), (19, 29),\\
(20, 21), (20, 23), (20, 24), (20, 25), (20, 26), (20, 27), (20, 28), (20, 29), (21, 22), (21, 24), (21, 25), (21, 26),\\
(21, 27), (21, 28), (21, 29), (22, 23), (22, 24), (22, 25), (22, 26), (22, 27), (22, 28), (22, 29), (23, 24), (23, 25),\\
(23, 26), (23, 27), (23, 28), (23, 29), (24, 25), (24, 26), (24, 27), (24, 28), (24, 29), (25, 26), (25, 27), (25, 28),\\
(25, 29), (26, 27), (26, 28), (26, 29), (27, 28), (27, 29), (28, 29) \}
$

\noindent
$
{\bf x} = \big[\begin{matrix}  3 & -3 & -3 & -3 & -3 & -3 & 5 & 1 & -19 & 25 & -3 & 25 & 5 & -23 & 5 & -11 & -7 & 19 & -11  \end{matrix} \\
\begin{matrix}  -7 & -31 & -27 & 47 & 17 & -3 & 1 & 1 & 1 & 1 & 1  \end{matrix}\big]
$

\vspace{\baselineskip}
\noindent
$\boldsymbol{n = 31, \rho = 10, |\Sigma| = 1}$

\noindent
$
E(G) = \{ (0, 1), (0, 2), (0, 3), (0, 4), (0, 5), (0, 6), (0, 7)^{\rC}, (0, 8), (0, 9), (0, 10), (1, 2), (1, 3), (1, 4), (1, 5),\\
(1, 6), (1, 7), (1, 8), (1, 9), (1, 10), (2, 3), (2, 4), (2, 5), (2, 6), (2, 7), (2, 8), (2, 9), (2, 10), (3, 4), (3, 5), (3, 6),\\
(3, 7), (3, 8), (3, 9), (3, 10), (4, 5), (4, 6), (4, 7), (4, 8), (4, 9), (4, 10), (5, 6), (5, 7), (5, 8), (5, 9), (5, 10),\\
(6, 7), (6, 8), (6, 11), (6, 12), (7, 11), (7, 12), (7, 13), (8, 11), (8, 13), (8, 14), (9, 11), (9, 14), (9, 15), (9, 16),\\
(10, 12), (10, 14), (10, 17), (10, 18), (11, 12), (11, 13), (11, 14), (11, 15), (11, 16), (11, 17), (12, 13), (12, 14),\\
(12, 15), (12, 16), (12, 17), (12, 18), (13, 14), (13, 15), (13, 16), (13, 17), (13, 18), (13, 19), (14, 15), (14, 16),\\
(14, 17), (14, 18), (15, 16), (15, 17), (15, 18), (15, 19), (15, 20), (16, 17), (16, 18), (16, 19), (16, 20), (17, 19),\\
(17, 21), (17, 22), (18, 21), (18, 23), (18, 24), (18, 25), (19, 20), (19, 23), (19, 26), (19, 27), (19, 28), (19, 29),\\
(20, 22), (20, 23), (20, 24), (20, 25), (20, 26), (20, 27), (20, 30), (21, 22), (21, 23), (21, 24), (21, 25), (21, 26),\\
(21, 28), (21, 29), (21, 30), (22, 23), (22, 24), (22, 26), (22, 27), (22, 28), (22, 29), (22, 30), (23, 25), (23, 27),\\
(23, 28), (23, 29), (23, 30), (24, 25), (24, 26), (24, 27), (24, 28), (24, 29), (24, 30), (25, 26), (25, 27), (25, 28),\\
(25, 29), (25, 30), (26, 27), (26, 28), (26, 29), (26, 30), (27, 28), (27, 29), (27, 30), (28, 29), (28, 30), (29, 30) \}
$

\noindent
$
{\bf x} = \big[\begin{matrix}  1 & -1 & -1 & -1 & -1 & -1 & 2 & -1 & -3 & 2 & 3 & 3 & 5 & -4 & 3 & -1 & -1 & -2 & -2 & -1   \end{matrix} \\
\begin{matrix} -3 & -5 & -2 & 1 & -2 & 1 & -1 & 5 & 3 & 3 & 1  \end{matrix}\big]
$

\vspace{\baselineskip}
\noindent
$\boldsymbol{n = 16, \rho = 11, |\Sigma| = 2}$

\noindent
$
E(G) = \{ (0, 1), (0, 2), (0, 3), (0, 4), (0, 5), (0, 6), (0, 7), (0, 8), (0, 9), (0, 10), (0, 11), (1, 2), (1, 3), (1, 4),\\
(1, 5), (1, 6), (1, 7), (1, 8), (1, 9), (1, 10), (1, 11), (2, 3)^{\rC}, (2, 4), (2, 5), (2, 6), (2, 7), (2, 8), (2, 9), (2, 10),\\
(2, 12), (3, 4), (3, 5), (3, 6), (3, 7), (3, 11), (3, 12), (3, 13), (3, 14), (4, 5), (4, 6), (4, 8), (4, 11), (4, 13), (4, 14),\\
(4, 15), (5, 9), (5, 10), (5, 11), (5, 12), (5, 13), (5, 15), (6, 9), (6, 11), (6, 12), (6, 13), (6, 14), (6, 15)^{\rC}, (7, 8),\\
(7, 9), (7, 10), (7, 11), (7, 12), (7, 14), (7, 15), (8, 9), (8, 10), (8, 12), (8, 13), (8, 14), (8, 15), (9, 12), (9, 13),\\
(9, 14), (9, 15), (10, 11), (10, 12), (10, 13), (10, 14), (10, 15), (11, 13), (11, 14), (11, 15), (12, 13), (12, 14),\\
(12, 15), (13, 14), (13, 15), (14, 15) \}
$

\noindent
$
{\bf x} = \begin{bmatrix} 1 & 1 & -2 & 2 & 1 & 1 & -2 & 1 & 1 & 4 & -5 & -2 & -1 & -1 & -1 & 2 \end{bmatrix}
$

\vspace{\baselineskip}
\noindent
$\boldsymbol{n = 18, \rho = 11, |\Sigma| = 2}$

\noindent
$
E(G) = \{ (0, 1), (0, 2), (0, 3), (0, 4), (0, 5), (0, 6), (0, 7), (0, 8)^{\rC}, (0, 9), (0, 10), (0, 11), (1, 2), (1, 3), (1, 4),\\
(1, 5), (1, 6), (1, 7), (1, 8), (1, 9), (1, 10), (1, 11), (2, 3), (2, 4), (2, 5), (2, 6), (2, 7), (2, 8), (2, 9), (2, 10),\\
(2, 11), (3, 4), (3, 5), (3, 6), (3, 7), (3, 8), (3, 9), (3, 10), (3, 11), (4, 5), (4, 6), (4, 7), (4, 8), (4, 12), (4, 13),\\
(4, 14), (5, 6), (5, 7), (5, 9), (5, 12), (5, 13), (5, 15), (6, 10), (6, 12), (6, 14)^{\rC}, (6, 16), (6, 17), (7, 11), (7, 14),\\
(7, 15), (7, 16), (7, 17), (8, 12), (8, 13), (8, 14), (8, 15), (8, 16), (8, 17), (9, 12), (9, 13), (9, 14), (9, 15),\\
(9, 16), (9, 17), (10, 11), (10, 12), (10, 13), (10, 15), (10, 16), (10, 17), (11, 13), (11, 14), (11, 15), (11, 16),\\
(11, 17), (12, 13), (12, 14), (12, 15), (12, 16), (12, 17), (13, 14), (13, 15), (13, 16), (13, 17), (14, 15), (14, 16),\\
(14, 17), (15, 16), (15, 17), (16, 17) \}
$

\noindent
$
{\bf x} = \begin{bmatrix} 3 & -1 & -1 & -1 & 5 & -1 & -3 & -5 & -2 & -4 & 5 & 4 & 1 & 8 & 2 & -2 & -4 & -4 \end{bmatrix}
$

\pagebreak
\noindent
$\boldsymbol{n = 20, \rho = 11, |\Sigma| = 3}$

\noindent
$
E(G) = \{ (0, 1), (0, 2), (0, 3), (0, 4), (0, 5), (0, 6), (0, 7)^{\rC}, (0, 8), (0, 9), (0, 10), (0, 11), (1, 2), (1, 3), (1, 4),\\
(1, 5), (1, 6), (1, 7), (1, 8), (1, 9), (1, 10), (1, 11), (2, 3), (2, 4), (2, 5), (2, 6), (2, 7), (2, 8), (2, 9), (2, 10),\\
(2, 11), (3, 4), (3, 5), (3, 6), (3, 7), (3, 8), (3, 9), (3, 10), (3, 11), (4, 5), (4, 6), (4, 7), (4, 8), (4, 9), (4, 10),\\
(4, 11), (5, 6), (5, 7)^{\rC}, (5, 8), (5, 12), (5, 13), (5, 14), (6, 12), (6, 13), (6, 15), (6, 16), (6, 17), (7, 12), (7, 14),\\
(7, 15), (7, 16), (7, 18), (8, 12), (8, 15), (8, 17), (8, 18), (8, 19), (9, 12), (9, 13), (9, 14), (9, 17), (9, 18),\\
(9, 19), (10, 13), (10, 14), (10, 15), (10, 16), (10, 17), (10, 19), (11, 13), (11, 14), (11, 15), (11, 16), (11, 18),\\
(11, 19), (12, 13)^{\rC}, (12, 15), (12, 16), (12, 17), (12, 18), (12, 19), (13, 14), (13, 16), (13, 17), (13, 18),\\
(13, 19), (14, 15), (14, 16), (14, 17), (14, 18), (14, 19), (15, 16), (15, 17), (15, 18), (15, 19), (16, 17), (16, 18),\\
(16, 19), (17, 18), (17, 19), (18, 19) \}
$

\noindent
$
{\bf x} = \begin{bmatrix} 15 & -1 & -1 & -1 & -1 & -23 & 13 & -8 & -14 & 13 & -7 & 14 & 6 & -4 & -20 & -1 & 9 & 2 & 2 & 3 \end{bmatrix}
$

\vspace{\baselineskip}
\noindent
$\boldsymbol{n = 22, \rho = 11, |\Sigma| = 2}$

\noindent
$
E(G) = \{ (0, 1)^{\rC}, (0, 2), (0, 3), (0, 4), (0, 5), (0, 6), (0, 7), (0, 8), (0, 9), (0, 10), (0, 11), (1, 2), (1, 3), (1, 4),\\
(1, 5), (1, 6), (1, 7), (1, 8), (1, 9), (1, 10), (1, 11), (2, 3), (2, 4), (2, 5), (2, 6), (2, 7), (2, 8), (2, 9), (2, 10),\\
(2, 11), (3, 4), (3, 5), (3, 6), (3, 7), (3, 8), (3, 9), (3, 10), (3, 11), (4, 5), (4, 6), (4, 7), (4, 8), (4, 9), (4, 10),\\
(4, 12), (5, 6), (5, 7), (5, 8), (5, 9), (5, 12), (5, 13), (6, 11), (6, 12), (6, 14), (6, 15), (6, 16), (7, 11), (7, 13),\\
(7, 14), (7, 15), (7, 17), (8, 12), (8, 13), (8, 14), (8, 17), (8, 18), (9, 12), (9, 13), (9, 17), (9, 19), (9, 20),\\
(10, 11), (10, 14), (10, 17), (10, 18), (10, 19), (10, 21), (11, 15), (11, 16), (11, 20), (11, 21), (12, 15)^{\rC},\\
(12, 16), (12, 18), (12, 19), (12, 20), (12, 21), (13, 14), (13, 15), (13, 16), (13, 18), (13, 19), (13, 20), (13, 21),\\
(14, 16), (14, 17), (14, 18), (14, 19), (14, 20), (14, 21), (15, 16), (15, 17), (15, 18), (15, 19), (15, 20), (15, 21),\\
(16, 17), (16, 18), (16, 19), (16, 20), (16, 21), (17, 18), (17, 19), (17, 20), (17, 21), (18, 19), (18, 20), (18, 21),\\
(19, 20), (19, 21), (20, 21) \}
$

\noindent
$
{\bf x} = \big[\begin{matrix}  3 & 3 & 9 & 9 & -20 & 9 & -37 & -14 & 20 & -14 & 9 & 32 & 3 & 38 & -25 & -9 & -14 & -49 & 20   \end{matrix} \\
\begin{matrix}  -14 & 9 & 32  \end{matrix}\big]
$

\vspace{\baselineskip}
\noindent
$\boldsymbol{n = 24, \rho = 11, |\Sigma| = 2}$

\noindent
$
E(G) = \{ (0, 1), (0, 2), (0, 3), (0, 4), (0, 5), (0, 6), (0, 7), (0, 8), (0, 9), (0, 10), (0, 11)^{\rC}, (1, 2), (1, 3), (1, 4),\\
(1, 5), (1, 6), (1, 7), (1, 8), (1, 9), (1, 10), (1, 11), (2, 3), (2, 4), (2, 5), (2, 6), (2, 7), (2, 8), (2, 9), (2, 10),\\
(2, 11), (3, 4), (3, 5), (3, 6), (3, 7), (3, 8), (3, 9), (3, 10), (3, 11), (4, 5), (4, 6), (4, 7), (4, 8), (4, 9), (4, 10),\\
(4, 11), (5, 6), (5, 7), (5, 8), (5, 9), (5, 10), (5, 12), (6, 7), (6, 8), (6, 9), (6, 13), (6, 14), (7, 10), (7, 15), (7, 16),\\
(7, 17), (8, 10), (8, 15), (8, 16), (8, 18), (9, 10), (9, 15), (9, 19), (9, 20), (10, 13), (10, 21), (11, 13), (11, 14),\\
(11, 17), (11, 18), (11, 22)^{\rC}, (11, 23), (12, 13), (12, 14), (12, 15), (12, 16), (12, 17), (12, 19), (12, 20),\\
(12, 21), (12, 22), (12, 23), (13, 17), (13, 18), (13, 19), (13, 20), (13, 21), (13, 22), (13, 23), (14, 15), (14, 16),\\
(14, 17), (14, 18), (14, 19), (14, 20), (14, 21), (14, 22), (15, 16), (15, 17), (15, 19), (15, 20), (15, 22), (15, 23),\\
(16, 17), (16, 18), (16, 19), (16, 21), (16, 22), (16, 23), (17, 18), (17, 20), (17, 21), (17, 23), (18, 19), (18, 20),\\
(18, 21), (18, 22), (18, 23), (19, 20), (19, 21), (19, 22), (19, 23), (20, 21), (20, 22), (20, 23), (21, 22), (21, 23),\\
(22, 23) \}
$

\noindent
$
{\bf x} = \big[\begin{matrix}  3 & -1 & -1 & -1 & -1 & -2 & 5 & 2 & 2 & -1 & -4 & -2 & -3 & 5 & -5 & 2 & -2 & 2 & 2 & -2 & 2   \end{matrix} \\
\begin{matrix}  -5 & 1 & 4 \end{matrix}\big]
$

\vspace{\baselineskip}
\noindent
$\boldsymbol{n = 26, \rho = 11, |\Sigma| = 2}$

\noindent
$
E(G) = \{ (0, 1), (0, 2), (0, 3), (0, 4), (0, 5), (0, 6), (0, 7), (0, 8), (0, 9), (0, 10), (0, 11), (1, 2), (1, 3), (1, 4),\\
(1, 5), (1, 6), (1, 7), (1, 8), (1, 9), (1, 10), (1, 11), (2, 3), (2, 4), (2, 5), (2, 6), (2, 7), (2, 8), (2, 9), (2, 10),\\
(2, 11), (3, 4), (3, 5), (3, 6), (3, 7), (3, 8), (3, 9), (3, 10), (3, 11), (4, 5), (4, 6), (4, 7), (4, 8), (4, 9), (4, 10),\\
(4, 11), (5, 6), (5, 7), (5, 8), (5, 9)^{\rC}, (5, 12), (5, 13), (6, 10), (6, 11), (6, 14), (6, 15), (6, 16), (7, 10), (7, 14),\\
(7, 15), (7, 16), (7, 17), (8, 10), (8, 14), (8, 15), (8, 18), (8, 19), (9, 18), (9, 19), (9, 20), (9, 21)^{\rC}, (9, 22),\\
(10, 12), (10, 13), (10, 14), (11, 12), (11, 13), (11, 14), (11, 15), (11, 16), (12, 13), (12, 14), (12, 15), (12, 16),\\
(12, 17), (12, 18), (12, 19), (12, 20), (13, 14), (13, 15), (13, 18), (13, 21), (13, 22), (13, 23), (13, 24), (14, 19),\\
(14, 20), (14, 23), (14, 25), (15, 17), (15, 21), (15, 23), (15, 24), (15, 25), (16, 17), (16, 19), (16, 20), (16, 22),\\
(16, 23), (16, 24), (16, 25), (17, 18), (17, 20), (17, 21), (17, 22), (17, 23), (17, 24), (17, 25), (18, 19), (18, 20),\\
(18, 21), (18, 22), (18, 24), (18, 25), (19, 21), (19, 22), (19, 23), (19, 24), (19, 25), (20, 21), (20, 22), (20, 23),\\
(20, 24), (20, 25), (21, 22), (21, 23), (21, 24), (21, 25), (22, 23), (22, 24), (22, 25), (23, 24), (23, 25), (24, 25) \}
$

\noindent
$
{\bf x} = \big[\begin{matrix}  1 & 1 & 1 & 1 & 1 & 3 & 13 & -14 & 1 & -6 & -2 & 1 & -2 & -9 & 6 & -5 & -8 & 1 & 6 & -13 & 20 & 9   \end{matrix} \\
\begin{matrix}   -6 & -5 & -5 & 10 \end{matrix}\big]
$

\vspace{\baselineskip}
\noindent
$\boldsymbol{n = 28, \rho = 11, |\Sigma| = 1}$

\noindent
$
E(G) = \{ (0, 1), (0, 2), (0, 3), (0, 4), (0, 5), (0, 6), (0, 7), (0, 8), (0, 9)^{\rC}, (0, 10), (0, 11), (1, 2), (1, 3), (1, 4),\\
(1, 5), (1, 6), (1, 7), (1, 8), (1, 9), (1, 10), (1, 11), (2, 3), (2, 4), (2, 5), (2, 6), (2, 7), (2, 8), (2, 9), (2, 10),\\
(2, 11), (3, 4), (3, 5), (3, 6), (3, 7), (3, 8), (3, 9), (3, 10), (3, 11), (4, 5), (4, 6), (4, 7), (4, 8), (4, 9), (4, 10),\\
(4, 11), (5, 6), (5, 7), (5, 8), (5, 9), (5, 12), (5, 13), (6, 10), (6, 11), (6, 14), (6, 15), (6, 16), (7, 10), (7, 12),\\
(7, 17), (7, 18), (7, 19), (8, 10), (8, 14), (8, 15), (8, 20), (8, 21), (9, 11), (9, 14), (9, 15), (9, 16), (9, 20),\\
(10, 12), (10, 13), (10, 14), (11, 12), (11, 13), (11, 14), (11, 15), (12, 13), (12, 14), (12, 15), (12, 16), (12, 17),\\
(12, 18), (12, 19), (13, 14), (13, 15), (13, 16), (13, 17), (13, 18), (13, 19), (13, 20), (14, 16), (14, 17), (14, 22),\\
(14, 23), (15, 18), (15, 22), (15, 24), (15, 25), (15, 26), (16, 19), (16, 22), (16, 23), (16, 24), (16, 25), (16, 27),\\
(17, 21), (17, 22), (17, 23), (17, 24), (17, 25), (17, 26), (17, 27), (18, 20), (18, 21), (18, 22), (18, 24), (18, 25),\\
(18, 26), (18, 27), (19, 21), (19, 22), (19, 23), (19, 24), (19, 25), (19, 26), (19, 27), (20, 21), (20, 22), (20, 23),\\
(20, 24), (20, 25), (20, 26), (20, 27), (21, 22), (21, 23), (21, 24), (21, 25), (21, 26), (21, 27), (22, 23), (22, 26),\\
(22, 27), (23, 24), (23, 25), (23, 26), (23, 27), (24, 25), (24, 26), (24, 27), (25, 26), (25, 27), (26, 27) \}
$

\noindent
$
{\bf x} = \big[\begin{matrix}  1 & -1 & -1 & -1 & -1 & 8 & -1 & -2 & 6 & -1 & -7 & -1 & 2 & -1 & -1 & 5 & -1 & 1 & -6 & 5   \end{matrix} \\
\begin{matrix}  -5 & 3 & 2 & 1 & -1 & -1 & 2 & -4 \end{matrix}\big]
$

\vspace{\baselineskip}
\noindent
$\boldsymbol{n = 30, \rho = 11, |\Sigma| = 2}$

\noindent
$
E(G) = \{ (0, 1), (0, 2), (0, 3), (0, 4), (0, 5), (0, 6), (0, 7), (0, 8), (0, 9), (0, 10), (0, 11), (1, 2), (1, 3), (1, 4),\\
(1, 5), (1, 6), (1, 7), (1, 8), (1, 9), (1, 10), (1, 11), (2, 3), (2, 4), (2, 5), (2, 6), (2, 7), (2, 8), (2, 9), (2, 10),\\
(2, 11), (3, 4), (3, 5), (3, 6), (3, 7), (3, 8), (3, 9), (3, 10), (3, 11), (4, 5), (4, 6), (4, 7), (4, 8), (4, 9), (4, 10),\\
(4, 11), (5, 6), (5, 7), (5, 8), (5, 9), (5, 12), (5, 13), (6, 10), (6, 11), (6, 14), (6, 15), (6, 16), (7, 10), (7, 12),\\
(7, 17), (7, 18), (7, 19), (8, 10), (8, 14), (8, 15), (8, 16), (8, 20), (9, 13), (9, 21), (9, 22), (9, 23), (9, 24),\\
(10, 12), (10, 13), (10, 14)^{\rC}, (11, 12), (11, 13), (11, 14), (11, 15), (11, 16), (12, 13), (12, 14), (12, 15),\\
(12, 16), (12, 17), (12, 18), (12, 19), (13, 14), (13, 15), (13, 16), (13, 17), (13, 18), (13, 19), (14, 15), (14, 16),\\
(14, 17)^{\rC}, (14, 18), (14, 19), (15, 16), (15, 17), (15, 18), (15, 19), (15, 20), (16, 17), (16, 21), (16, 22),\\
(16, 25), (17, 18), (17, 20), (17, 23), (17, 25), (17, 26), (18, 21), (18, 25), (18, 27), (18, 28), (18, 29), (19, 23),\\
(19, 24), (19, 26), (19, 27), (19, 28), (19, 29), (20, 22), (20, 23), (20, 24), (20, 25), (20, 26), (20, 27), (20, 28),\\
(20, 29), (21, 22), (21, 23), (21, 24), (21, 25), (21, 26), (21, 27), (21, 28), (21, 29), (22, 23), (22, 24), (22, 25),\\
(22, 26), (22, 27), (22, 28), (22, 29), (23, 24), (23, 26), (23, 27), (23, 28), (23, 29), (24, 25), (24, 26), (24, 27),\\
(24, 28), (24, 29), (25, 26), (25, 27), (25, 28), (25, 29), (26, 27), (26, 28), (26, 29), (27, 28), (27, 29), (28, 29) \}
$

\noindent
$
{\bf x} = \big[\begin{matrix}  1 & 1 & 1 & 1 & 1 & -28 & 1 & -31 & -2 & 14 & 32 & 10 & -16 & 29 & -14 & 20 & -25 & -4 & 25 & -14   \end{matrix} \\
\begin{matrix}  10 & 2 & -13 & 7 & -2 & -13 & -20 & 9 & 9 & 9 \end{matrix}\big]
$

\vspace{\baselineskip}
\noindent
$\boldsymbol{n = 32, \rho = 11, |\Sigma| = 2}$

\noindent
$
E(G) = \{ (0, 1), (0, 2), (0, 3), (0, 4), (0, 5), (0, 6), (0, 7), (0, 8), (0, 9), (0, 10), (0, 11), (1, 2), (1, 3), (1, 4),\\
(1, 5), (1, 6), (1, 7), (1, 8), (1, 9), (1, 10), (1, 11), (2, 3), (2, 4), (2, 5), (2, 6), (2, 7), (2, 8), (2, 9), (2, 10),\\
(2, 11), (3, 4), (3, 5), (3, 6), (3, 7), (3, 8), (3, 9), (3, 10), (3, 11), (4, 5), (4, 6), (4, 7), (4, 8), (4, 9), (4, 10),\\
(4, 11), (5, 6), (5, 7), (5, 8), (5, 9), (5, 12), (5, 13), (6, 10), (6, 11), (6, 14), (6, 15), (6, 16), (7, 10), (7, 12),\\
(7, 17), (7, 18), (7, 19), (8, 10), (8, 14), (8, 15), (8, 16), (8, 20), (9, 12), (9, 17), (9, 18), (9, 21), (9, 22),\\
(10, 12), (10, 13), (10, 14), (11, 12), (11, 13), (11, 14), (11, 15), (11, 16), (12, 13), (12, 14), (12, 15), (12, 16),\\
(12, 17), (12, 18), (13, 14), (13, 15), (13, 16), (13, 17), (13, 18), (13, 19), (13, 20), (14, 15), (14, 16), (14, 17),\\
(14, 18), (14, 19), (15, 16), (15, 17), (15, 18), (15, 19)^{\rC}, (15, 20), (16, 17), (16, 18), (16, 19), (16, 20),\\
(17, 18), (17, 19), (17, 20), (17, 23), (18, 21), (18, 24), (18, 25), (19, 23), (19, 24), (19, 25), (19, 26)^{\rC},\\
(19, 27), (20, 22), (20, 23), (20, 28), (20, 29), (20, 30), (20, 31), (21, 23), (21, 24), (21, 25), (21, 26), (21, 27),\\
(21, 28), (21, 29), (21, 30), (21, 31), (22, 23), (22, 24), (22, 25), (22, 26), (22, 27), (22, 28), (22, 29), (22, 30),\\
(22, 31), (23, 26), (23, 27), (23, 28), (23, 29), (23, 30), (23, 31), (24, 25), (24, 26), (24, 27), (24, 28), (24, 29),\\
(24, 30), (24, 31), (25, 26), (25, 27), (25, 28), (25, 29), (25, 30), (25, 31), (26, 27), (26, 28), (26, 29), (26, 30),\\
(26, 31), (27, 28), (27, 29), (27, 30), (27, 31), (28, 29), (28, 30), (28, 31), (29, 30), (29, 31), (30, 31) \}
$

\noindent
$
{\bf x} = \big[\begin{matrix}  1 & 1 & 1 & 1 & 1 & -4 & 1 & 8 & -4 & -5 & -3 & 3 & -2 & -3 & -5 & 3 & 1 & 2 & 3 & -1 & 3 & -3    \end{matrix} \\
\begin{matrix}  -1 & -2 & 2 & 2 & -1 & -3 & 1 & 1 & 1 & 1 \end{matrix}\big]
$

\vspace{\baselineskip}
\noindent
$\boldsymbol{n = 34, \rho = 11, |\Sigma| = 2}$

\noindent
$
E(G) = \{ (0, 1), (0, 2), (0, 3), (0, 4), (0, 5), (0, 6), (0, 7), (0, 8), (0, 9), (0, 10), (0, 11)^{\rC}, (1, 2), (1, 3), (1, 4),\\
(1, 5), (1, 6), (1, 7), (1, 8), (1, 9), (1, 10), (1, 11), (2, 3), (2, 4), (2, 5), (2, 6), (2, 7), (2, 8), (2, 9), (2, 10),\\
(2, 11), (3, 4), (3, 5), (3, 6), (3, 7), (3, 8), (3, 9), (3, 10), (3, 11), (4, 5), (4, 6), (4, 7), (4, 8), (4, 9), (4, 10),\\
(4, 11), (5, 6), (5, 7), (5, 8), (5, 9), (5, 12), (5, 13), (6, 10), (6, 11), (6, 14), (6, 15), (6, 16), (7, 10), (7, 12),\\
(7, 17), (7, 18), (7, 19), (8, 10), (8, 14), (8, 15), (8, 16), (8, 20), (9, 12), (9, 17), (9, 18), (9, 20), (9, 21),\\
(10, 12), (10, 13), (10, 14), (11, 12), (11, 13), (11, 14), (11, 15), (11, 16), (12, 13), (12, 14), (12, 15), (12, 16),\\
(12, 17), (12, 18), (13, 14), (13, 15), (13, 16), (13, 17), (13, 18), (13, 19), (13, 20), (14, 15), (14, 16), (14, 17),\\
(14, 18), (14, 19), (15, 16), (15, 17), (15, 18), (15, 19), (15, 20), (16, 17), (16, 18), (16, 19), (16, 20), (17, 18),\\
(17, 19), (17, 20), (17, 21), (18, 19), (18, 20), (18, 22), (19, 20), (19, 21), (19, 23), (19, 24), (20, 25), (20, 26),\\
(20, 27)^{\rC}, (21, 22), (21, 23), (21, 24), (21, 25), (21, 28), (21, 29), (21, 30), (21, 31), (22, 23), (22, 24),\\
(22, 25), (22, 26), (22, 27), (22, 28), (22, 29), (22, 32), (22, 33), (23, 25), (23, 26), (23, 27), (23, 28), (23, 30),\\
(23, 31), (23, 32), (23, 33), (24, 25), (24, 26), (24, 28), (24, 29), (24, 30), (24, 31), (24, 32), (24, 33), (25, 27),\\
(25, 29), (25, 30), (25, 31), (25, 32), (25, 33), (26, 27), (26, 28), (26, 29), (26, 30), (26, 31), (26, 32), (26, 33),\\
(27, 28), (27, 29), (27, 30), (27, 31), (27, 32), (27, 33), (28, 29), (28, 30), (28, 31), (28, 32), (28, 33), (29, 30),\\
(29, 31), (29, 32), (29, 33), (30, 31), (30, 32), (30, 33), (31, 32), (31, 33), (32, 33) \}
$

\noindent
$
{\bf x} = \big[\begin{matrix}  12 & -4 & -4 & -4 & -4 & 23 & 20 & 4 & -10 & -6 & -23 & -8 & -3 & -1 & -6 & 9 & 9 & -2 & 1 & 8    \end{matrix} \\
\begin{matrix}  -8 & -7 & -4 & -7 & 8 & 4 & -8 & 4 & -7 & 4 & 1 & 1 & 4 & 4 \end{matrix}\big]
$

\end{appendices}

\end{document}